\newcommand {\ep} {\varepsilon}
\newcommand {\gm} {\gamma}
\newcommand {\dt} {\delta}
\newcommand {\al} {\alpha}
\newcommand {\su} {\subset}
\newcommand {\wh} {\widehat}
\newcommand {\mc} {\mathcal}
\newtheorem{teo}{Theorem}[section]
\newtheorem{pro}{Proposition}[section]
\newtheorem{cor}{Corollary}[section]
\newtheorem{lm}{Lemma}[section]
\theoremstyle{definition}
\newtheorem{rem}{Remark}[section]
\newtheorem{df}{Definition}[section]
\newtheorem{ex}{Example}[section]
\title[Noncommutative Wiener-Wintner type ergodic theorems]{Noncommutative Wiener-Wintner type ergodic theorems}
\keywords{Semifinite von Neumann algebra, noncommutative Wiener-Wintner type theorem, Hartman sequence, moving average sequence, subsequential ergodic theorem}
\subjclass[2020]{47A35, 46L52}
\author{Morgan O'Brien}
\address{North Dakota State University\\ Department of Mathematics\\ 1210 Albrecht Boulevard, Minard Hall \\ Fargo, ND 58102, USA}
\email{morgan.obrien@ndsu.edu, obrienmorganc@gmail.com}
\begin{document}
\begin{abstract}
In this article, we obtain a version of the noncommutative Banach Principle suitable to prove Wiener-Wintner type results for weights in $W_1$-space. This is used to obtain noncommutative Wiener-Wintner type ergodic theorems for various types of weights for certain types of positive Dunford-Schwartz operators. We also study the b.a.u. (a.u.) convergence of some subsequential averages and moving averages of such operators.
\end{abstract}
\date{November 23, 2021}

\maketitle
\section{Introduction}\label{s1}

Weighted (or modulated) ergodic theorems have been an active area of research in ergodic theory since the early 1980's \cite{bl,comlio,ei,el,lot}. Results of this nature have recently been of interest in the von Neumann algebra setting as well (see, for example, \cite{cl1,cl2,cls,hs,li2,lm,ob}).

Loosely speaking, given a measure space $(X,\mathcal{F},\mu)$, a function $f\in L_1(X,\mathcal{F},\mu)$, and an operator $T$ acting on $L_1(X,\mathcal{F},\mu)$, if a weighted pointwise ergodic theorem holds for a complex sequence $\alpha=\{\alpha_k\}_{k=0}^{\infty}$, then one may find a set $X_\alpha\subseteq X$ of full measure for which the weighted averages $\frac{1}{n}\sum_{k=0}^{n-1}\alpha_k T^kf(x)$ converge for every $x\in X_\alpha$. If, furthermore, all sequences in some collection $\mathcal{W}$ are to be considered, one would probably use $\bigcap_{\alpha\in\mathcal{W}}X_\alpha$ as the set of convergence. However, this intersection may have a small measure or be empty, even if the weighted pointwise ergodic theorem holds for every $\alpha\in\mathcal{W}$ (for example, it may be the case that, for every $x\in X$, there is some $\alpha\in\mathcal{W}$ such that $x\not\in X_\alpha$).

Such a situation does not occur if a Wiener-Wintner type ergodic theorem holds for $\mathcal{W}$. That is, given a function $f\in L_1(X,\mathcal{F},\mu)$ and operator $T$ acting on $L_1(X,\mathcal{F},\mu)$, there is a set $X_{\mathcal{W}}$ of full measure in $X$ on which the weighted ergodic theorem holds for every $\alpha\in\mathcal{W}$ by taking $X_\alpha=X_{\mathcal{W}}$.


The class of Hartman (almost periodic) sequences - which arise naturally in harmonic analysis, ergodic theory, and number theory as sequences that admit Fourier coefficients - is one of the most common classes of sequences for weighted ergodic theorems. 
A complex sequence $\alpha$ is a Hartman sequence when $c_{\alpha}(\lambda):=\lim_{n\to\infty}\frac{1}{n}\sum_{k=0}^{n-1}\alpha_k\overline{\lambda}^k$ exists for every $\lambda\in\mathbb{T}=\{\lambda\in\mathbb{C}:|\lambda|=1\}$. For bounded sequences, it is known that being a Hartman sequence is equivalent to being a good weight for the mean ergodic theorem for every contraction on a Hilbert space \cite[Theorem 21.2]{efhn}, while Hartman sequences which are not necessarily bounded still frequently occur as good weights for the same types of result \cite{comlio,el,lot}.

For any probability measure-preserving (p.m.p.) dynamical system $(X,\mathcal{F},\mu,\phi)$, the Wiener-Wintner theorem states that, for any $f\in L_1(X,\mathcal{F},\mu)$, there is a measurable set $X'\subseteq X$ with $\mu(X')=1$ such that the weighted averages $\frac{1}{n}\sum_{k=0}^{n-1}\lambda^k f(\phi^k(x))$ converge for every $x\in X'$ and every $\lambda\in\mathbb{T}$. In other words, every $x\in X'$ gives rise to a Hartman sequence $\{f(\phi^k(x))\}_{k=0}^{\infty}$. A more thorough discussion of this can be found in \cite{efhn,lot}.

Another important class of Hartman sequences is the class of Besicovich sequences, which have been widely studied in classical ergodic theory (see \cite{bl,lot}), and have also been studied in the context of individual ergodic theorems on von Neumann algebras in \cite{cl1,cl2,cls}. Other types of Hartman sequences were studied as weights in the noncommutative setting in \cite{hs}, where a noncommutative multiparameter Wiener-Wintner type ergodic theorem was proven.

In the commutative case, almost everywhere (a.e.) convergence of weighted averages of a power-bounded operator with unbounded weights is known in numerous situations (see, for example, \cite{comlio,lot}). On the other hand, in the noncommutative setting, all known results of this type are obtained for bounded weights. In this article, we prove, among others, a noncommutative Wiener-Winter type ergodic theorem for weights which are not necessarily bounded.

As stated in \cite[Section 3]{lot}, there exists a bounded Hartman sequence $\alpha=\{\alpha_k\}_{k=0}^{\infty}$, a p.m.p. system $(X,\mathcal{F},\mu,\phi)$, and $f\in L_\infty(X,\mathcal{F},\mu)$ such that the weighted averages $\frac{1}{n}\sum_{k=0}^{n-1}\alpha_k \, f\circ\phi^k$ fail to converge $\mu$-a.e. Note that the map $f\mapsto f\circ\phi$ is a positive Dunford-Schwartz operator on $L_\infty(X,\mathcal{F},\mu)$. 
Consequently, in the noncommutative setting, if we want to obtain almost uniform (a.u.) or bilaterally almost uniform (b.a.u.) convergence of the averages of a positive Dunford-Schwartz operator $T$ weighted by sequences in a set which includes all bounded Hartman sequences, some extra assumptions on $T$ are needed.

In \cite[Corollary 1]{st}, it was shown that, if $(X,\mathcal{F},\mu)$ is a probability space, and if $T$ is a Dunford-Schwartz operator on $L_\infty(X,\mathcal{F},\mu)$ such that the restriction of $T$ to $L_2(X,\mathcal{F},\mu)$ is positive as a bounded operator on the Hilbert space, then for every $f\in L_p(X,\mathcal{F},\mu)$ with $1<p<\infty$, the iterates $T^nf(x)$ converge a.e. on $X$. This was extended to the von Neumann algebra setting in \cite[Theorem 6.7]{jx} by considering a.u. and b.a.u. convergence for a positive Dunford-Schwartz operator $T$ such that the restriction of $T$ to $L_2(\mathcal{M},\tau)$ is a positive Hilbert space operator. This result was then generalized to hold for $T$ satisfying weaker conditions in \cite[Theorem 4.3]{be}.

Let $\mathcal{M}$ be a von Neumann algebra equipped with a normal semifinite faithful trace $\tau$, and let $T$ be a weakly almost periodic operator (WAP) on the Banach space $L_p(\mathcal{M},\tau)$, $1<p<\infty$. The Jacobs-de Leeuw-Glicksberg decomposition $L_p(\mathcal{M},\tau)=\overline{\operatorname{span}(\mathcal{U}_p(T))}\oplus\mathcal{V}_p(T)$ \cite[Chapter 16]{efhn} gives us a tool commonly used to prove various ergodic theorems. If $T$ is a positive Dunford-Schwartz operator like above, then for every $x\in\mathcal{V}_p(T)$ it follows that $T^n(x)\to0$ b.a.u. or a.u. as $n\to\infty$ according to the value of $p$. With this in mind, our remedy to the problem mentioned above will be to assume that $T^n(x)\to0$ b.a.u. or a.u. only for elements in $\mathcal{V}_p(T)$. It will be shown in Section \ref{s6} that this assumption is more natural than it appears. For instance, if the restriction of $T$ to $L_2(\mathcal{M},\tau)$ is only self-adjoint, then this assumption is satisfied. We note that it is possible that $T(y)=-y$ for some nonzero $y\in L_p(\mathcal{M},\tau)$, so that $\{T^n(y)\}_{n=0}^{\infty}$ does not converge b.a.u. at all. 

The layout of the paper, and results obtained, are as follows. In Section \ref{s3}, we study the properties of closedness of the set of operators for which a noncommutative version of the Wiener-Wintner type ergodic theorem holds. In particular, Theorem \ref{t31} states that, given a set $\mathcal{W}\subseteq W_1$, where $W_1$ denotes the set of sequences which are absolutely Ces\'{a}ro bounded, and certain conditions on a positive Dunford-Schwartz operator $T$, the set of operators in $L_p(\mathcal{M},\tau)$ for which the noncommutative counterpart of the Wiener-Wintner property holds for $\mathcal{W}$ is a closed subset of $L_p(\mathcal{M},\tau)$ for $1\leq p<\infty$. Furthermore, in Proposition \ref{p33}, under a certain equicontinuity condition on the iterates of $T$, we prove that one can extend the weights under consideration in Theorem \ref{t31} to their closure with respect to the topology on $W_1$.

In Section \ref{s4}, we utilize the assumption on the b.a.u. (a.u.) convergence of the iterates of $T$ for $x\in\mathcal{V}_q(T)$ for $1<q<\infty$ (respectively, $2\leq q<\infty$) to obtain a noncommutative Wiener-Wintner ergodic theorem for the set of all bounded Hartman sequences. This result is then extended to $L_p(\mathcal{M},\tau)$ for every other $1\leq p<\infty$ (respectively, $2\leq p<\infty$). If we further require that the sequence of iterates $\{T^n\}_{n=0}^{\infty}$ are bilaterally uniformly (respectively, uniformly) equicontinuous in measure at zero on $(L_q(\mathcal{M},\tau),\|\cdot\|_q)$, this result will be shown to hold on $L_q(\mathcal{M},\tau)$ when the weights are Hartman sequences which are in the $W_1$-seminorm closure of the union of the $W_r$ class of weights over $1<r\leq\infty$. As corollaries to these results, we obtain several Return Time-type theorems for the operators involved.

We study the convergence of subsequential and moving averages of such operators in Section \ref{s5}. In particular, we show that b.a.u. and a.u. convergence occurs when the powers of $T$ are prime numbers. This result, and the results in \cite{el}, imply that the von Mangoldt function provides a good weight for $T$ as well. We then use the norm convergence results of \cite{comli} to obtain a Wiener-Wintner type ergodic theorem for moving average sequences.

Finally, we discuss numerous examples and conditions that show the assumptions on $T$ are actually attainable. Notably, as mentioned above, we show in Example \ref{e65} that, if the restriction of $T$ to $L_2(\mathcal{M},\tau)$ is only self-adjoint, then $T$ satisfies the assumptions made throughout the article.

\section{Preliminaries}\label{s2}

Throughout this article, we will let $\mathcal{M}$ be a semifinite von Neumann algebra acting on a Hilbert space $\mathcal{H}$. Further, we will let $\tau$ denote a normal semifinite faithful trace on $\mathcal{M}$. The operator norm on $\mathcal{B}(\mathcal{H})$ will be denoted by $\|\cdot\|_\infty$. Denote the identity operator on $\mathcal{H}$ by $\textbf{1}$. For clarity, $\mathbb{N}$ will denote the set of natural numbers, $\mathbb{N}_0=\mathbb{N}\cup\{0\}$, and $\mathbb{T}\subset\mathbb{C}$ will be the unit circle in the set of complex numbers.

We will denote the lattice of projections on $\mathcal{M}$ by $\mathcal{P}(\mathcal{M})$. Given $e\in\mathcal{P}(\mathcal{M})$, write $e^\perp:=\textbf{1}-e$, noting that $e^\perp\in\mathcal{P}(\mathcal{M})$ as well.

Suppose $x:\mathcal{D}_x\to\mathcal{H}$ is a closed densely defined operator on $\mathcal{H}$. Then $x$ is \textit{affiliated} to $\mathcal{M}$ if $yx\subseteq xy$ for every $y\in\mathcal{M}'$, the commutant of $\mathcal{M}$. If $x$ is affiliated to $\mathcal{M}$, then $x$ is $\tau$-measurable if for every $\epsilon>0$ there exists $e\in\mathcal{P}(\mathcal{M})$ such that $\tau(e^\perp)\leq\epsilon$ and $xe\in\mathcal{M}$. Let $L_0(\mathcal{M},\tau)$ denote the set of all $\tau$-measurable operators on $\mathcal{H}$. Define, for every $\epsilon,\delta>0$, the set $V(\epsilon,\delta)$ by
$$V(\epsilon,\delta)=\{x\in L_0(\mathcal{M},\tau):\|xe\|_\infty\leq\delta\text{ for some }e\in\mathcal{P}(\mathcal{M})\text{ with }\tau(e^\perp)\leq\epsilon\}.$$ Then $\{V(\epsilon,\delta)\}_{\epsilon,\delta>0}$ is a system of neighborhoods of zero in $L_0(\mathcal{M},\tau)$ which induces a topology, called the \textit{measure topology}, under which it is a complete topological metrizable $*$-algebra (see \cite{ne}).

Given $x\in L_0(\mathcal{M},\tau)$, we say that $x$ is positive if $\langle x\xi,\xi\rangle\geq0$ for every $\xi\in\mathcal{D}_x$, and write $x\geq0$. If $S\subseteq L_0(\mathcal{M},\tau)$, then write $S^+=\{x\in S:x\geq0\}$.

For every $x\in L_0(\mathcal{M},\tau)$, given $t>0$ one may define the $t$\textit{-th generalized singular number of} $x$ as
$$\mu_t(x)=\inf\{\|xe\|_\infty:\text{there exists }e\in\mathcal{P}(\mathcal{M})\text{ such that }\tau(e^\perp)\leq t\}.$$ Using this, one may extend the trace $\tau$ to all of $L_0^+(\mathcal{M},\tau)$. Indeed, by \cite[Proposition 2.7]{fk}, given $x\in L_0^+(\mathcal{M},\tau)$, one has
$$\tau(x)=\int_{0}^{\infty}\mu_t(x)dt.$$

For each $x\in L_0(\mathcal{M},\tau)$, there exists $u\in\mathcal{M}$ and $|x|\in L_0^+(\mathcal{M},\tau)$ such that $x=u|x|$, where $|x|$ is given by $|x|=(x^*x)^{1/2}$. For each $1\leq p<\infty$, we define  $L_p(\mathcal{M},\tau)$, the noncommutative $L_p$-space associated to $\mathcal{M}$, by
$$L_p(\mathcal{M},\tau):=\{x\in L_0(\mathcal{M},\tau):\tau(|x|^p)<\infty\}.$$
Given $x\in L_p(\mathcal{M},\tau)$, define $\|x\|_p=(\tau(|x|^p))^{1/p}$. Write $L_\infty(\mathcal{M},\tau):=\mathcal{M}$, and equip it with the operator norm $\|\cdot\|_\infty$. Then $(L_p(\mathcal{M},\tau),\|\cdot\|_p)$ is a Banach space for every $1\leq p\leq\infty$. For sake of brevity, for either $p=0$ or $1\leq p\leq\infty$, we will frequently write $L_p$ in place of $L_p(\mathcal{M},\tau)$.

A Dunford-Schwartz operator is a linear operator $T:L_1+\mathcal{M}\to L_1+\mathcal{M}$ with
$$\|T(x)\|_1\leq\|x\|_1 \ \forall x\in L_1 \ \text{ and } \ \|T(x)\|_\infty\leq\|x\|_\infty \ \forall x\in\mathcal{M}.$$
If $T(x)\geq0$ for every $x\geq0$, then $T$ is called a positive Dunford-Schwartz operator, and we write $T\in DS^+:=DS^+(\mathcal{M},\tau)$. Notably, by \cite[Lemma 1.1]{jx}, $T$ extends to a linear contraction on $L_p$ for every $1\leq p\leq\infty$.

Throughout this article we will consider the weighted averages of the iterates of $T$. Fix $\alpha=\{\alpha_k\}_{k=0}^{\infty}\subset\mathbb{C}$. Then, for every $n\in\mathbb{N}$, define the $n$\textit{-th weighted ergodic average of $T$ by $\alpha$} as
$$M_n^\alpha(T)(x)=\frac{1}{n}\sum_{k=0}^{n-1}\alpha_kT^k(x), \text{ where } x\in L_1+\mathcal{M}.$$
When $\alpha=\{1\}_{n=0}^{\infty}$, we will write $M_n^\alpha(T)=M_n(T)$ for every $n\geq1$.

A sequence $\{x_n\}_{n=1}^{\infty}\subset L_0$ is said to converge to $x\in L_0$ \textit{bilaterally almost uniformly (b.a.u.) (almost uniformly (a.u.))} if, for every $\epsilon>0$, there exists $e\in\mathcal{P}(\mathcal{M})$ such that $\tau(e^\perp)\leq\epsilon$ and
$$\lim_{n\to\infty}\|e(x_n-x)e\|_\infty=0 \ (\text{respectively,  }\lim_{n\to\infty}\|(x_n-x)e\|_\infty=0).$$

If $\mathcal{M}$ is commutative, then a.u. and b.a.u. convergence are the same;
however, in general, a.u. convergence implies b.a.u. convergence (see \cite[Example 3.1]{cl2} for the failure of the converse).

\begin{df}\label{d22}
Let $(X,\|\cdot\|)$ be a normed space. Let $A_n:X\to L_0$, $n\in\mathbb{N}_0$, be a sequence of linear maps. Then $\{A_n\}_{n=0}^{\infty}$ is said to be \textit{bilaterally uniformly equicontinuous in measure (b.u.e.m.) (uniformly equicontinuous in measure (u.e.m.)) at zero on $(X,\|\cdot\|)$} if, for every $\epsilon,\delta>0$, there exists $\gamma>0$ such that, for every $x\in X$ with $\|x\|<\gamma$, there exists $e\in\mathcal{P}(\mathcal{M})$ such that $\tau(e^\perp)\leq\epsilon$, $A_n(x)e\in\mathcal{M}$, and
$$\sup_{n\in\mathbb{N}_0}\|eA_n(x)e\|\leq\delta \ (\text{respectively, }\sup_{n\in\mathbb{N}_0}\|A_n(x)e\|\leq\delta).$$
\end{df}

\begin{rem}\label{r21}
If linear maps $A_n: X\to L_0$, $n=0,1,\dots$, are continuous with respect to the measure topology in $L_0$, 
then b.a.u (a.u.) convergence of the sequence $\{A_n(x)\}_{n=1}^{\infty}$ for each $x\in X$ implies that $\{A_n\}_{n=1}^{\infty}$ is b.u.e.m. (respectively, u.e.m.) at zero on $X$ by the results in \cite{cl3}. This is guaranteed if there exists $1\leq p<\infty$ such that the ranges of these maps lie in $L_p$ and they are continuous with respect to $\|\cdot\|_p$. 

For example, if $T\in DS^+$ is such that the sequence $\{T^n(x)\}_{n=0}^{\infty}$ converges b.a.u. (a.u.) for each $x\in L_p$ with $1\leq p<\infty$, then the sequence $\{T^n\}_{n=0}^{\infty}$ is b.u.e.m (respectively, u.e.m.) at zero on $(L_p,\|\cdot\|_p)$.
\end{rem}

\begin{pro}\cite[Theorem 4.4, Proposition 4.3]{li1}\label{p21}
Assume $T\in DS^+(\mathcal{M},\tau)$. Then, for any $1\leq p<\infty$, the sequence $\{M_n(T)\}_{n=1}^{\infty}$ is b.u.e.m. at zero on $(L_p,\|\cdot\|_p)$. Furthermore, if $2\leq p<\infty$, then $\{M_n(T)\}_{n=1}^{\infty}$ are u.e.m. at zero on $(L_p,\|\cdot\|_p)$.
\end{pro}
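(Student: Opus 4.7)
The plan is to deduce both statements from noncommutative maximal ergodic inequalities, which convert $L_p$-norm smallness of $x$ into the uniform-in-$n$ projection statement demanded by Definition \ref{d22}. Since every $x\in L_p$ decomposes as $x=(x_1^+-x_1^-)+i(x_2^+-x_2^-)$ with each $x_j^\pm\in L_p^+$ of norm at most $\|x\|_p$, and each $M_n(T)$ is linear, it suffices to establish both properties on $L_p^+$, at the cost of a factor of $4$ in $\gamma$ and replacing $e$ by a meet of four projections.

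For b.u.e.m. when $p=1$, apply Yeadon's noncommutative weak-type $(1,1)$ maximal ergodic inequality: for $x\in L_1^+$ and $\lambda>0$ there exists a projection $e\in\mathcal{P}(\mathcal{M})$ with $\tau(e^\perp)\le\|x\|_1/\lambda$ and $\sup_{n\ge1}\|eM_n(T)(x)e\|_\infty\le\lambda$. Given $\epsilon,\delta>0$, choosing $\lambda=\delta$ and $\gamma=\epsilon\delta$ verifies the definition directly. For $1<p<\infty$, interpolate between Yeadon's weak-$(1,1)$ bound and the trivial strong-$(\infty,\infty)$ contractivity of $M_n(T)$ on $\mathcal{M}$ by a noncommutative Marcinkiewicz-type argument, producing a weak-type $(p,p)$ maximal inequality from which the b.u.e.m. property follows by the same parameter choice scheme.

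For u.e.m. when $2\le p<\infty$, invoke the stronger one-sided (column) noncommutative maximal inequality of Junge-Xu type: for $x\in L_p^+$ and $\epsilon>0$ there is a projection $e$ with $\tau(e^\perp)\le\epsilon$ and $\sup_n\|M_n(T)(x)e\|_\infty\le C_p\|x\|_p\epsilon^{-1/p}$. Solving $C_p\|x\|_p\epsilon^{-1/p}\le\delta$ and setting $\gamma=\delta\epsilon^{1/p}/C_p$ (with the factor-of-$4$ adjustment from the positive decomposition absorbed) yields u.e.m. directly.

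The main obstacle is the asymmetry between the two conclusions: b.u.e.m. is accessible for every $p\ge1$ through Yeadon-type two-sided compression bounds combined with interpolation, whereas the one-sided column bound required for u.e.m. genuinely relies on a Hilbert space/duality structure that restricts its range to $p\ge2$ and fails in $1\le p<2$ at this level of generality. Beyond this conceptual point, the technical care lies in tracking how $\gamma(\epsilon,\delta)$ propagates through the positive decomposition, the interpolation step, and the translation between integral-form maximal inequalities and the projection-based formulation of Definition \ref{d22}.
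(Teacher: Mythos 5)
The paper does not prove this proposition; it is quoted verbatim from Litvinov \cite[Theorem 4.4, Proposition 4.3]{li1}, so the relevant comparison is with the argument in that reference. Your overall architecture (reduce to $L_p^+$, get a weak-type maximal estimate, translate it into the $\epsilon$-$\delta$-$\gamma$ form of Definition \ref{d22}) is the right one, and the $p=1$ case via Yeadon's inequality is exactly as in the source. The problem is the middle step. You propose to obtain the weak type $(p,p)$ bound for $1<p<\infty$ by ``interpolating'' Yeadon's weak $(1,1)$ estimate with the $(\infty,\infty)$ contractivity ``by a noncommutative Marcinkiewicz-type argument.'' As written, this step would fail: the quantity $\sup_n\|eM_n(T)(x)e\|_\infty$ is not the $L_p$-norm of any single operator, so the classical Marcinkiewicz mechanism does not apply; making real interpolation work for noncommutative maximal functions requires the whole $L_p(\mathcal{M};\ell_\infty)$ framework and is the central (and delicate) technical achievement of Junge--Xu \cite{jx}, not a routine deduction. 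Moreover it is unnecessary. The weak type $(p,p)$ estimate you need follows from Yeadon alone by an elementary spectral cut: for $x\in L_p^+$ and $\lambda>0$ write $x=xe_{[0,\lambda]}(x)+xe_{(\lambda,\infty)}(x)$; the first summand lies in $\mathcal{M}$ with norm at most $\lambda$, so all its averages do too, while the second lies in $L_1^+$ with $\|xe_{(\lambda,\infty)}(x)\|_1\leq\lambda^{1-p}\|x\|_p^p$, to which Yeadon applies at level $\lambda$. This yields $\tau(e^\perp)\leq(\|x\|_p/\lambda)^p$ and $\sup_n\|eM_n(T)(x)e\|_\infty\leq2\lambda$, which gives b.u.e.m.\ at zero on $L_p$ for every $1\leq p<\infty$ with the same parameter bookkeeping you describe.

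For the u.e.m.\ half you invoke a one-sided column maximal inequality as a black box. That inequality is true for $2\leq p<\infty$, but the reason it is true --- and the reason the threshold is $p=2$ --- is the squaring argument that the cited source actually uses: by the operator Cauchy--Schwarz inequality for averages together with Kadison's inequality for the positive contractions $T^k$, one has $|M_n(T)(x)|^2\leq M_n(T)(|x|^2)$, hence $\|M_n(T)(x)e\|_\infty^2=\|e\,|M_n(T)(x)|^2e\|_\infty\leq\|eM_n(T)(|x|^2)e\|_\infty$, and since $|x|^2\in L_{p/2}$ with $p/2\geq1$, the u.e.m.\ statement on $L_p$ reduces to the already-established b.u.e.m.\ statement on $L_{p/2}$. (This is also where the function $h(s)=\sqrt{s}(2+\sqrt{s})$ in Proposition \ref{p31} comes from.) I would replace both of your heavy citations with these two elementary reductions, or at minimum cite \cite{jx} and \cite{li1} for the precise maximal inequalities rather than sketching a derivation that does not go through in the stated form.
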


\begin{pro}\cite[Theorem 2.1]{li1}, \cite[Theorem 2.3]{cl3}\label{p22}
Let $X$ be a Banach space, and let $A_n:X\to L_0$ be a sequence of linear maps. If $\{A_n\}_{n=0}^{\infty}$ is b.u.e.m. (u.e.m.) at zero on $X$, then the set
$$\mathcal{C}=\{x\in X:\{A_n(x)\}_{n=0}^{\infty} \ \text{ converges b.a.u. (respectively, a.u.)}\}$$ is a closed subspace of $X$. 
\end{pro}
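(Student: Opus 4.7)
The plan is to prove the statement in two stages: first that $\mathcal{C}$ is a linear subspace of $X$ (easy), then that $\mathcal{C}$ is norm-closed (the heart of the Banach principle).

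For the subspace property, given $x, y \in \mathcal{C}$ and $\alpha, \beta \in \mathbb{C}$, linearity of each $A_n$ reduces the problem to showing that b.a.u.\ (a.u.) limits respect linear combinations. This is routine: if $\{A_n(x)\}$ and $\{A_n(y)\}$ converge b.a.u., then for any $\epsilon>0$ pick projections $e_x, e_y$ with $\tau(e_x^\perp),\tau(e_y^\perp)\leq\epsilon/2$ witnessing the respective convergences, and take $e=e_x\wedge e_y$, which satisfies $\tau(e^\perp)\leq\epsilon$ and simultaneously witnesses convergence of $\{A_n(\alpha x+\beta y)\}=\{\alpha A_n(x)+\beta A_n(y)\}$.

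For closedness, I would take $x\in\overline{\mathcal{C}}$ and fix $\epsilon>0$; the goal is to exhibit a projection $e\in\mathcal{P}(\mathcal{M})$ with $\tau(e^\perp)\leq\epsilon$ such that $\{A_n(x)e\}_{n=0}^\infty$ (or $\{eA_n(x)e\}$ in the b.u.e.m.\ case) is Cauchy in $\|\cdot\|_\infty$; since $\mathcal{M}$ (and hence the corner) is complete in operator norm, this will give the desired convergence. For each $k\geq 1$, apply the u.e.m.\ hypothesis with parameters $(\epsilon/2^{k+1},\, 1/k)$ to obtain $\gamma_k>0$, then pick $y_k\in\mathcal{C}$ with $\|x-y_k\|<\gamma_k$, yielding a projection $p_k$ with $\tau(p_k^\perp)\leq\epsilon/2^{k+1}$ and $\sup_n\|A_n(x-y_k)p_k\|_\infty\leq 1/k$. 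Since $y_k\in\mathcal{C}$, choose a projection $q_k$ with $\tau(q_k^\perp)\leq\epsilon/2^{k+1}$ such that $\{A_n(y_k)q_k\}$ is norm-Cauchy. Set
$$e=\bigwedge_{k=1}^{\infty}(p_k\wedge q_k),\qquad \tau(e^\perp)\leq\sum_{k=1}^{\infty}\frac{\epsilon}{2^k}=\epsilon.$$

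Then, for the Cauchy estimate, for any $k$ and any $n,m$ large enough (depending on $k$),
$$\|A_n(x)e-A_m(x)e\|_\infty\leq\|A_n(x-y_k)e\|_\infty+\|(A_n(y_k)-A_m(y_k))e\|_\infty+\|A_m(y_k-x)e\|_\infty\leq\tfrac{3}{k},$$
using $e\leq p_k$ on the outer terms and $e\leq q_k$ together with the Cauchy property of $\{A_n(y_k)q_k\}$ on the middle. Letting $k\to\infty$ shows the sequence is Cauchy, and hence a.u.\ convergent. The b.a.u.\ case is identical with $\|\cdot\|_\infty$ replaced by $\|e(\cdot)e\|_\infty$ throughout.

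The main technical hurdle I anticipate is verifying that the linear operations preserve the b.a.u./a.u.\ structure carefully, and keeping track of the $2^{-k}$ budget when intersecting the doubly-indexed family $\{p_k\wedge q_k\}$ so that the limiting projection $e$ still has $\tau(e^\perp)\leq\epsilon$. Everything else is bookkeeping inside the triangle inequality, which is why the u.e.m./b.u.e.m.\ hypothesis (giving uniform control over $n$) is precisely what is needed.
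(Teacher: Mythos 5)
Your proposal is correct and follows the standard Banach-principle argument that the paper itself cites from the literature for this proposition and then reuses almost verbatim in its own proof of Theorem \ref{t31}: approximate $x$ by $y_k\in\mathcal{C}$ within the equicontinuity threshold, intersect the two families of projections with a geometric $\epsilon/2^{k+1}$ budget, and run a three-term triangle inequality to get the Cauchy property on the corner. The only point worth making explicit is the final passage from ``for every $\epsilon>0$ there is $e$ with $\{A_n(x)e\}$ Cauchy in $\mathcal{M}$'' to the existence of a single limit $\widehat{x}\in L_0$ independent of $\epsilon$, which uses the completeness of $L_0$ with respect to a.u.\ (b.a.u.)\ convergence, i.e.\ \cite[Theorem 2.3]{cls} as invoked elsewhere in the paper.
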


\section{Wiener-Wintner form of the Banach Principle}\label{s3}

In the sequel, whenever a result involves b.a.u. and a.u. versions, we will prove only the b.a.u. case when the proof of the other follows the same reasoning. The same will be the case if the statement involves b.u.e.m. and u.e.m. versions or $bWW_q(\cdot)$ and $WW_q(\cdot)$ versions.

For $1\leq r<\infty$, let $W_r$ denote the set of sequences $\alpha=\{\alpha_k\}_{k=0}^{\infty}\subset\mathbb{C}$ with
\[
\|\alpha\|_{W_r}:=\left(\limsup_{n\to\infty}\frac{1}{n}\sum_{k=0}^{n-1}|\alpha_k|^r\right)^{1/r}<\infty,
\] 
and let $W_\infty$ denote the set of 
bounded sequences in $\mathbb C$ with $\|\alpha\|_{W_\infty}=\sup_{n\in\mathbb{N}_0}|\alpha_n|$, $\alpha\in W_\infty$.
The function $\|\cdot\|_{W_r}$ is called the {\it $W_r$-seminorm}. It is known that each space $W_r$ is complete with respect to the topology generated by $\|\cdot\|_{W_r}$ for $1\leq r\leq\infty$, and that $W_r\subset W_s$ whenever $1\leq s<r\leq\infty$. Let $W_{1^+}$ denote the $W_1$-seminorm closure of $\bigcup_{r>1}W_r$.

For notational convenience, we will write 
\[
|\alpha|_{W_r}:=\left(\sup_n\frac{1}{n}\sum_{k=0}^{n-1}|\alpha_k|^r\right)^{1/r}\Big(\|\alpha\|_{W_\infty}\Big) \text{\ \ if \ }\al=\{\alpha_k\}_{k=0}^{\infty}\in W_r
\]
and $1\leq r<\infty$ (respectively, $r=\infty$).
Due to the fact that $\limsup_{n\to\infty}\alpha_n<\infty$ if and only if $\sup_{n\in\mathbb{N}_0}\alpha_n<\infty$ for every $\{\alpha_n\}_{n=0}^{\infty}\subset\mathbb{R}$, it follows that $|\alpha|_{W_r}<\infty$ whenever $\alpha\in W_r$.

\begin{df}\label{d31}
 Let $T\in DS^+$, $1\leq p<\infty$, $1\leq r\leq\infty$, and $\mathcal{W}\subseteq W_r$. The family $\{M_n^\alpha(T)\}_{\substack{n\in\mathbb N,\\ \alpha\in\mathcal{W}}}$ is called {\it $\mathcal{W}$-b.u.e.m.} ({\it $\mathcal{W}$-u.e.m.}) {\it at zero on $L_p$} if there exists an increasing function $h: [0,\infty)\to[0,\infty)$ such that, given $\ep>0$, $\dt>0$, there is $\gm>0$ for which $\|x\|_p<\gm$ entails the existence of $e\in\mathcal{P}(\mathcal{M})$ with $\tau(e^\perp)\leq\ep$ satisfying
\[
\sup_n\|eM_n^\alpha(T)(x)e\|_\infty\leq h(|\alpha|_{W_r})\delta\text{\, \Big(resp.,\ } \sup_n\|M_n^\alpha(T)(x)e\|_\infty\leq h(|\al|_{W_r})\delta\Big) 
\]
for all $\alpha\in\mathcal{W}$.
\end{df}

Let $\mathcal{W}\subseteq W_1$. Given $1\leq p<\infty$, we will write
\begin{align*}
bWW_p(\mathcal{W})
&=\Big\{x\in L_p:\ \forall\ \epsilon>0 \ \exists\ e\in\mathcal{P}(\mathcal{M})\text{ such that }\tau(e^\perp)\leq\epsilon\text{ and } \\
&\ \ \ \ \ \  \ \ \ \ \ \ \ \ \ \ \ \ \big\{eM_n^\alpha(T)(x)e\big\}_{n=1}^{\infty}\text{ converges in } \mathcal{M}\, \ \forall \ \alpha\in\mathcal{W}\Big\} \\
WW_p(\mathcal{W})
&=\Big\{x\in L_p:\ \forall\ \epsilon>0 \ \exists\ e\in\mathcal{P}(\mathcal{M})\text{ such that }\tau(e^\perp)\leq\epsilon\text{ and } \\
&\ \ \ \ \ \  \ \ \ \ \ \ \ \ \ \ \ \ \big\{M_n^\alpha(T)(x)e\big\}_{n=1}^{\infty}\text{ converges in } \mathcal{M}\, \ \forall \ \alpha\in\mathcal{W}\Big\}.
\end{align*}

The following was proven for a fixed sequence $\beta\in W_\infty$ in \cite{cl1} as Theorems 2.1 and 2.3. However, closer examination of the proofs show that the projection $e\in\mathcal{P}(\mathcal{M})$ obtained does not depend on the particular sequence $\beta$, and so the result may be stated in the following stronger form.

\begin{pro}[cf. \cite{cl1}]\label{p31}
Let $T\in DS^+$. Then $\{M_n^\alpha(T)\}_{n,\alpha}$ is $W_\infty$-b.u.e.m. at zero on $L_p$ with $h(s)=s$ when $1\leq p<\infty$  and $W_\infty$-u.e.m. at zero on $L_p$ with $h(s)=\sqrt{s}(2+\sqrt{s})$ when $2\leq p<\infty$.
\end{pro}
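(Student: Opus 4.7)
The statement strengthens \cite[Theorems 2.1 and 2.3]{cl1}: as the author notes, inspection of those proofs shows that the projection $e\in\mathcal{P}(\mathcal{M})$ is extracted from the \emph{unweighted} maximal inequality of Proposition~\ref{p21} and therefore depends only on $x$ and $(\epsilon,\delta)$, not on the particular $\beta$. My plan is to perform the standard two-step reduction to positive data and then invoke Proposition~\ref{p21} to obtain a single projection $e$ that works uniformly over $\alpha\in W_\infty$.

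For the b.u.e.m.\ statement, decompose $x=y_1-y_2+i(y_3-y_4)$ with $y_l\in L_p^+$ and $\|y_l\|_p\leq\|x\|_p$, and write $\alpha_k=\alpha_k^{(1)}-\alpha_k^{(2)}+i(\alpha_k^{(3)}-\alpha_k^{(4)})$ with $\alpha_k^{(m)}\in[0,|\alpha|_{W_\infty}]$. Since $T\in DS^+$ and $y_l\geq 0$, each $T^k(y_l)\geq 0$, so the pointwise operator inequality
\[
0\leq M_n^{\alpha^{(m)}}(T)(y_l)\leq|\alpha|_{W_\infty}\,M_n(T)(y_l)
\]
holds for every $n,l,m$. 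Apply the b.u.e.m.\ part of Proposition~\ref{p21} to each $y_l$ (with parameters $\epsilon/4$ and a suitably rescaled $\delta$) to obtain $e_l$ with $\tau(e_l^\perp)\leq\epsilon/4$ and $\sup_n\|e_lM_n(T)(y_l)e_l\|_\infty$ small, provided $\|x\|_p<\gamma$ for an appropriate $\gamma$. Setting $e=\bigwedge_{l=1}^{4}e_l$ gives $\tau(e^\perp)\leq\epsilon$, and since $e\leq e_l$ and $M_n(T)(y_l)\geq 0$, the identity $eAe=e(e_lAe_l)e$ yields $\|eM_n(T)(y_l)e\|_\infty\leq\|e_lM_n(T)(y_l)e_l\|_\infty$. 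Compressing the displayed inequality by $e$ and summing the sixteen resulting terms produces the $W_\infty$-b.u.e.m.\ bound with $h(s)=s$ once the numerical constants are absorbed into the initial choice of $\delta$.

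For the u.e.m.\ statement when $2\leq p<\infty$, the projection $e$ is instead produced from the u.e.m.\ half of Proposition~\ref{p21}, and the reductions of the preceding paragraph go through unchanged. The principal obstacle is that the monotonicity inequality $\|eAe\|_\infty\leq\|e_lAe_l\|_\infty$, valid for \emph{two-sided} compression of positive operators, does not extend to a one-sided estimate $\|Ae\|_\infty\leq\|e_lAe_l\|_\infty$. Following \cite[Theorem~2.3]{cl1} one complements the two-sided bound of the b.u.e.m.\ argument (which delivers the linear $s$-contribution to $h$) with a Cauchy--Schwarz type interpolation of the form $\|Ae\|_\infty\leq\|A\|_\infty^{1/2}\|eAe\|_\infty^{1/2}$ applied to the positive operators $M_n^{\alpha^{(m)}}(T)(y_l)$ (localized via the already-chosen projection so that the $\|\cdot\|_\infty$ factor is finite); the two estimates balance to produce precisely $h(s)=\sqrt{s}\,(2+\sqrt{s})=2\sqrt{s}+s$. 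The careful bookkeeping in this interpolation, rather than any conceptual novelty, is what distinguishes the u.e.m.\ case from the b.u.e.m.\ case.
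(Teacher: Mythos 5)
The paper offers no proof of Proposition \ref{p31}: it is stated as a strengthening of \cite[Theorems 2.1, 2.3]{cl1}, justified only by the remark that the projection produced in those proofs does not depend on the weight. Your reconstruction of the b.u.e.m.\ half is correct and is surely the intended argument: decomposing $x$ into four positive pieces with $\|y_l\|_p\leq\|x\|_p$ and $\alpha$ into four nonnegative pieces bounded by $|\alpha|_{W_\infty}$, the positivity of $T$ gives $0\leq M_n^{\alpha^{(m)}}(T)(y_l)\leq|\alpha|_{W_\infty}M_n(T)(y_l)$, and since two-sided compression is monotone on positive operators ($0\leq a\leq b$ implies $\|eae\|_\infty\leq\|ebe\|_\infty$, and $\|eae\|_\infty\leq\|e_lae_l\|_\infty$ when $e\leq e_l$), the projection $e=\bigwedge_l e_l$ supplied by Proposition \ref{p21} works simultaneously for every $\alpha\in W_\infty$, with the constant $16$ absorbed into $\delta$. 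That half is complete.

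The u.e.m.\ half has a genuine gap. The interpolation $\|ae\|_\infty\leq\|a\|_\infty^{1/2}\|eae\|_\infty^{1/2}$ (valid for $a\geq 0$ via $ea^2e\leq\|a\|_\infty\,eae$) is unusable here because $\|M_n^{\alpha^{(m)}}(T)(y_l)\|_\infty=\infty$ for generic $y_l\in L_p^+\setminus\mathcal{M}$; your parenthetical ``localized via the already-chosen projection'' does not repair this, since replacing $a$ by a compression changes the quantity $\|ae\|_\infty$ you are trying to bound, and restricting first to $x\in L_p\cap\mathcal{M}$ yields a bound involving $\|x\|_\infty$, which is not controlled by $\|x\|_p<\gamma$ (and equicontinuity, unlike convergence, cannot be recovered by density). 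Moreover the claimed ``balancing'' is not substantiated: your two ingredients would produce a bound linear in $s$ multiplied by $\|a\|_\infty^{1/2}$, not $(2\sqrt{s}+s)\delta$. The actual mechanism --- and the reason the u.e.m.\ statement requires $p\geq 2$ --- is a reduction of the one-sided estimate for $y\in L_p^+$ to a two-sided estimate for $y^2\in L_{p/2}^+$: an operator Cauchy--Schwarz/Kadison-type inequality of the form $|M_n^{\alpha^{(m)}}(T)(y)|^2\leq\|\alpha\|_{W_\infty}^2\,M_n(T)(y^2)$ converts $\|M_n^{\alpha^{(m)}}(T)(y)e\|_\infty^2=\|e\,|M_n^{\alpha^{(m)}}(T)(y)|^2e\|_\infty$ into a two-sided quantity for $y^2$, to which the $L_{p/2}$ b.u.e.m.\ bound applies; the square roots in $h(s)=\sqrt{s}(2+\sqrt{s})$ arise from taking the square root of this estimate and recombining the pieces of the decomposition. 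Without this (or an equivalent device from \cite{cl1}), the one-sided case is not proved.
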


We now want to prove an extension of this result for weights in $W_1$, but will need an extra assumption on the iterates of the operator $T\in DS^+$.

\begin{pro}\label{p32}
Let $1\leq p<\infty$, and assume $T\in DS^+$ is such that $\{T^n\}_{n=0}^\infty$ is b.u.e.m. (u.e.m.) at zero on $(L_p,\|\cdot\|_p)$. Then $\{M_n^\alpha(T)\}_{n,\alpha}$ is $W_1$-b.u.e.m. (respectively, $W_1$-u.e.m.) at zero on $L_p$.
\end{pro}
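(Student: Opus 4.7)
The plan is to show that the natural choice $h(s)=s$ works, and that the proof reduces to a direct triangle-inequality computation once one feeds the b.u.e.m.\ hypothesis on $\{T^n\}$ into the weighted average.

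First I fix $h(s)=s$ and let $\varepsilon, \delta>0$ be given. I invoke the assumed b.u.e.m.\ property of the sequence $\{T^n\}_{n=0}^\infty$ on $(L_p,\|\cdot\|_p)$ to produce $\gamma>0$ such that, for every $x\in L_p$ with $\|x\|_p<\gamma$, one can find a projection $e\in\mathcal{P}(\mathcal{M})$ with $\tau(e^\perp)\leq\varepsilon$ and
\[
\sup_{k\in\mathbb{N}_0}\|eT^k(x)e\|_\infty \leq \delta.
\]
Crucially, this single $e$ controls every iterate $T^k(x)$ simultaneously, which is exactly what makes the subsequent averaging step go through uniformly in both $n$ and $\alpha$.

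The second step is the averaging estimate. For any $\alpha=\{\alpha_k\}_{k=0}^\infty\in W_1$, any $x$ as above, and any $n\in\mathbb{N}$, the triangle inequality in $\mathcal{M}$ gives
\[
\|eM_n^\alpha(T)(x)e\|_\infty \leq \frac{1}{n}\sum_{k=0}^{n-1}|\alpha_k|\,\|eT^k(x)e\|_\infty \leq \delta\cdot\frac{1}{n}\sum_{k=0}^{n-1}|\alpha_k|\leq |\alpha|_{W_1}\,\delta,
\]
so taking the supremum over $n$ yields $\sup_n\|eM_n^\alpha(T)(x)e\|_\infty\leq h(|\alpha|_{W_1})\,\delta$ for every $\alpha\in W_1$, which is the required $W_1$-b.u.e.m.\ estimate. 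The u.e.m.\ case is identical after deleting the left-hand $e$ throughout, using the u.e.m.\ hypothesis on $\{T^n\}$ to bound $\sup_k\|T^k(x)e\|_\infty\leq\delta$ and then invoking $\|M_n^\alpha(T)(x)e\|_\infty \leq \frac{1}{n}\sum_{k=0}^{n-1}|\alpha_k|\,\|T^k(x)e\|_\infty$.

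Honestly, there is no real obstacle here: the content of the proposition is almost entirely packed into the hypothesis, since b.u.e.m.\ of $\{T^n\}$ already gives a single projection working for all iterates, and the weights $\alpha$ only enter through the scalar factor $|\alpha|_{W_1}$ via the definition of the $W_1$-seminorm. The only thing worth double-checking is that the supremum form $|\alpha|_{W_1}=\sup_n\frac{1}{n}\sum_{k=0}^{n-1}|\alpha_k|$ (rather than $\limsup$) is finite for every $\alpha\in W_1$, which is exactly the point noted by the author immediately after the definition of $|\alpha|_{W_r}$ and which ensures that the final bound $h(|\alpha|_{W_1})\delta$ is genuinely finite.
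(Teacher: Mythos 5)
Your proof is correct and follows essentially the same route as the paper's: fix $h(s)=s$, use the b.u.e.m.\ hypothesis on $\{T^n\}$ to obtain a single projection $e$ controlling all iterates, and then pass the bound through the weighted average via the triangle inequality and the factor $|\alpha|_{W_1}$. Your closing remark about the finiteness of the supremum form of $|\alpha|_{W_1}$ matches the observation the paper makes immediately after defining $|\alpha|_{W_r}$, so nothing is missing.
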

\begin{proof}
We will prove the b.u.e.m. at zero case only. 

Fix $\epsilon>0$ and $\delta>0$, and let $\gamma>0$ be such that $\|x\|_p<\gamma$ implies that there exists $e\in\mathcal{P}(\mathcal{M})$ with $\tau(e^\perp)\leq\epsilon$ satisfying $\sup_n\|eT^n(x)e\|_\infty\leq\delta$. Assume $x\in L_p$ is such that $\|x\|_p<\gamma$, and let $e\in\mathcal{P}(\mathcal{M})$ be such that $\tau(e^\perp)\leq\epsilon$ and $\sup_k\|eT^k(x)e\|_\infty\leq\delta.$ Then, given $\alpha\in W_1$, we have
\begin{align*}
\|eM_n^\alpha(T)(x)e\|_\infty
&\leq\frac{1}{n}\sum_{k=0}^{n-1}|\alpha_k|\|eT^k(x)e\|_\infty \\ 
&\leq\left(\frac{1}{n}\sum_{k=0}^{n-1}|\alpha_k|\right)\sup_k\|eT^k(x)e\|_\infty\leq|\alpha|_{W_1}\delta.
\end{align*}
for every $n$, 
so the claim follows with $h(s)=s$.
\end{proof}

The statement below is a Wiener-Wintner form of the noncommutative Banach Principle for weights in $W_1$:
\begin{teo}\label{t31}
Let $T\in DS^+$, $1\leq p<\infty$, $1\leq r\leq\infty$, and let $\mathcal{W}\subseteq W_r$. If $\{M_n^\alpha(T)\}_{n,\alpha}$ is $\mathcal{W}$-b.u.e.m. ($\mathcal{W}$-u.e.m.) at zero on $L_p$, then $bWW_p(\mathcal{W})$ (respectively, $WW_p(\mathcal{W})$) is closed in $L_p$.
\end{teo}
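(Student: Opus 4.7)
The plan is to adapt the classical Banach principle closure argument to this $\mathcal{W}$-uniform setting. Take a sequence $\{x_m\}_{m=1}^\infty \subseteq bWW_p(\mathcal{W})$ converging in $\|\cdot\|_p$ to some $x \in L_p$; I want to produce, for each $\epsilon>0$, a projection $e \in \mathcal{P}(\mathcal{M})$ with $\tau(e^\perp)\leq\epsilon$ such that $\{eM_n^\alpha(T)(x)e\}_{n=1}^\infty$ converges in $\mathcal{M}$ for every $\alpha\in\mathcal{W}$ simultaneously. The two ingredients are: (i) the $\mathcal{W}$-b.u.e.m.\ hypothesis, which supplies a \emph{single} projection that bounds $eM_n^\alpha(T)(\cdot)e$ uniformly in $n$ and $\alpha$ (with scale $h(|\alpha|_{W_r})$) for any input of small $L_p$-norm; and (ii) the defining property of $bWW_p(\mathcal{W})$, which supplies, for each $x_{m_k}$, a projection that works for every $\alpha\in\mathcal{W}$ at once.

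The construction is standard: pass to a subsequence $\{x_{m_k}\}$ with $\|x - x_{m_k}\|_p < \gamma_k$, where $\gamma_k$ is chosen via Definition~\ref{d31} so that there exists $e_k \in \mathcal{P}(\mathcal{M})$ with $\tau(e_k^\perp) \leq \epsilon/2^{k+2}$ and
\[
\sup_{n}\|e_k M_n^\alpha(T)(x - x_{m_k}) e_k\|_\infty \leq h(|\alpha|_{W_r}) \cdot 2^{-k} \qquad \forall \alpha\in\mathcal{W}.
\]
Then for each $k$, using $x_{m_k} \in bWW_p(\mathcal{W})$, pick $f_k \in \mathcal{P}(\mathcal{M})$ with $\tau(f_k^\perp)\leq \epsilon/2^{k+2}$ such that $\{f_k M_n^\alpha(T)(x_{m_k}) f_k\}_n$ converges in $\mathcal{M}$ for every $\alpha\in\mathcal{W}$. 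Set $e = \bigwedge_{k=1}^\infty (e_k \wedge f_k)$, so that $\tau(e^\perp) \leq \sum_k (\tau(e_k^\perp) + \tau(f_k^\perp)) \leq \epsilon$, and $e \leq e_k$, $e\leq f_k$ for every $k$.

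To verify convergence of $\{eM_n^\alpha(T)(x)e\}_n$ in $\mathcal{M}$, fix $\alpha\in\mathcal{W}$ and split
\[
eM_n^\alpha(T)(x)e = eM_n^\alpha(T)(x-x_{m_k})e + eM_n^\alpha(T)(x_{m_k})e.
\]
Using $e = e \cdot e_k = e_k \cdot e$ the first summand is bounded in $\|\cdot\|_\infty$ by $h(|\alpha|_{W_r}) 2^{-k}$ uniformly in $n$, and using $e = e f_k = f_k e$ the second summand coincides with $e(f_k M_n^\alpha(T)(x_{m_k}) f_k)e$, which is Cauchy in $n$ by the choice of $f_k$. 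Given $\eta>0$, first choose $k$ so that $2 h(|\alpha|_{W_r}) 2^{-k} < \eta/2$, then choose $N$ so that the second term satisfies the Cauchy condition to within $\eta/2$ for $n,n'\geq N$; a standard triangle inequality then gives $\|eM_n^\alpha(T)(x)e - eM_{n'}^\alpha(T)(x)e\|_\infty < \eta$, so the sequence is Cauchy and hence convergent in the Banach space $\mathcal{M}$.

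The main technical subtlety is that the Cauchy estimates are not uniform in $\alpha$ (the bound depends on $h(|\alpha|_{W_r})$), so I must be careful that the definition of $bWW_p(\mathcal{W})$ only demands \emph{pointwise} (in $\alpha$) convergence of $\{eM_n^\alpha(T)(x)e\}_n$ — otherwise the argument would require the sequence $\mathcal{W}$ to be $|\cdot|_{W_r}$-bounded. Checking Definition~\ref{d31} and the definition of $bWW_p(\mathcal{W})$ confirms this, so the same projection $e$ works for every $\alpha\in\mathcal{W}$ separately and the proof of the a.u.\ case follows by replacing $eM_n^\alpha(T)(\cdot)e$ with $M_n^\alpha(T)(\cdot)e$ throughout, invoking $\mathcal{W}$-u.e.m.\ in place of $\mathcal{W}$-b.u.e.m.
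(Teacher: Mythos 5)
Your proposal is correct and follows essentially the same argument as the paper: approximate $x$ by elements of $bWW_p(\mathcal{W})$ whose differences have small $L_p$-norm, use the $\mathcal{W}$-b.u.e.m.\ hypothesis to get a single projection controlling $\sup_n\|eM_n^\alpha(T)(x-x_{m_k})e\|_\infty$ by $h(|\alpha|_{W_r})$ times a vanishing quantity, intersect with the projections witnessing membership of the approximants, and conclude via a three-term triangle inequality, with the $\alpha$-dependence absorbed into the choice of $k$ since convergence is only required pointwise in $\alpha$. The only cosmetic difference is that the paper indexes its approximants by a double index $(k,m)$ with bound $|\alpha|_{W_r}/(km)$ rather than your single index with bound $h(|\alpha|_{W_r})2^{-k}$; both handle the non-uniformity in $\alpha$ identically.
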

\begin{proof}
We will prove the case of $bWW_p(\mathcal{W})$ only. Without loss of generality, assume that $h(s)=s$ (otherwise replace $|\alpha|_{W_r}$ and $m$ with $h(|\alpha|_{W_r})$ and $h(m)$ whenever needed).

Assume $x\in\overline{bWW_p(\mathcal{W})}$ and $\{z_j\}_{j=1}^{\infty}\su bWW_p(\mathcal{W})$ are such that $\|z_j-x\|_p\to0$ as $j\to\infty$, and let $y_j=z_j-x$ for every $j\geq1$.

Fix $\epsilon>0$, and let $k,m\in\mathbb{N}$. Since $\|y_j\|_p\to0$ as $j\to\infty$, and since $\{M_n^\alpha(T)\}_{n,\alpha}$ is $\mathcal{W}$-b.u.e.m. at zero on $L_p$, we may find $x_{k,m}\in\{y_j\}_j$ and $e_{k,m}\in\mathcal{P}(\mathcal{M})$ such that, for every $\alpha\in\mathcal{W}$, 
\[
\tau(e_{k,m}^{\perp})\leq\frac{\epsilon}{2^{k+m+1}} \ \text{ and } \ \sup_n\|e_{k,m}M_n^\alpha(T)(x_{k,m})e_{k,m}\|_\infty\leq\frac{|\alpha|_{W_r}}{km}.
\]

Now, if $e=\bigwedge_{k,m=1}^{\infty}e_{k,m}$, then it follows that
$\tau(e^\perp)\leq\ep/2$ and, for every $k$, $m$, and $\alpha\in\mathcal{W}$, 
\begin{equation}\label{eq51}
\sup_n\|eM_n^\alpha(T)(x_{k,m})e\|_\infty\leq\sup_n\|e_{k,m}M_n^\alpha(T)(x_{k,m})e_{k,m}\|_\infty
\leq\frac{|\alpha|_{W_r}}{km}.
\end{equation}

Given $k,m\in\mathbb{N}$,  
there exists $j_{k,m}\in\mathbb{N}$ such that $x_{k,m}=y_{j_{k,m}}=z_{j_{k,m}}-x$. As such, $x+x_{k,m}\in bWW_p(\mathcal{W})$ for every $k,m$, so there exists $f_{k,m}\in\mathcal{P}(\mathcal{M})$ such that
\[
\tau(f_{k,m}^{\perp})\leq\frac{\epsilon}{2^{k+m+1}} \ \text{ and } \ \{f_{k,m}M_n^\alpha(T)(x+x_{k,m})f_{k,m}\} \text{ converges in\ } \mathcal{M}\ \ \forall\ \alpha\in\mathcal{W}.
\]
Let $f=\bigwedge_{k,m=1}^{\infty}f_{k,m}$ and note that then $\tau(f^\perp)\leq\ep/2$ 
and, for every $k,m\in\mathbb{N}$,
\begin{equation}\label{eq52}
\{fM_n^\alpha(T)(x+x_{k,m})f\}_{n=1}^{\infty}\text{ converges in } \mathcal{M}\ \ \forall \ \alpha\in\mathcal{W}.
\end{equation}

Next, let $g=e\wedge f$. Then $\tau(g^\perp)\leq\epsilon$, and $g$ satisfies (\ref{eq51}) and (\ref{eq52}) in place of $e$ and $f$ respectively for every $k,m\in\mathbb{N}$.

Fix $\alpha\in\mathcal{W}$, and $\delta>0$. Since $|\alpha|_{W_r}<\infty$, there exists $m_0\in\mathbb{N}$ such that $|\alpha|_{W_r}\leq m_0$. Let $k_0\in\mathbb{N}$ be such that $\frac{1}{k_0}\leq\frac{\delta}{3}$. By (\ref{eq51}), it follows that
\[
\sup_n\|gM_n^\alpha(T)(x_{k_0,m_0})g\|_{\infty}\leq\frac{|\alpha|_{W_r}}{k_0m_0}\leq\frac{1}{k_0}\leq\frac{\delta}{3}.
\]
By (\ref{eq52}) above, $\{gM_n^\alpha(T)(x+x_{k_0,m_0})g\}_{n=1}^{\infty}$ is Cauchy, so that there exists $N$ such that $l,n\geq N$ implies 
\[
\|g(M_l^\alpha(T)(x+x_{k_0,m_0})-M_n^\alpha(T)(x+x_{k_0,m_0}))g\|_\infty\leq\frac{\delta}{3}.
\] 
Therefore, for every $l,n\geq N$, we find that
\begin{align*}
\|g(M_l^\alpha(T)(x)&-M_n^\alpha(T)(x))g\|_\infty\\
&\leq\|g(M_l^\alpha(T)(x+x_{k_0,m_0})-M_n^\alpha(T)(x+x_{k_0,m_0}))g\|_\infty \\
&+ \|gM_l^\alpha(T)(x_{k_0,m_0})g\|_\infty+\|gM_n^\alpha(T)(x_{k_0,m_0})g\|_\infty\leq\delta.
\end{align*}
Therefore, $\{gM_n^\alpha(T)(x)g\}_{n=1}^{\infty}$ is a Cauchy sequence in $\mathcal{M}$,  
hence $x\in bWW_p(\mathcal{W})$, 
so $bWW_p(\mathcal{W})$ is closed in $L_p$.
\end{proof}

As a consequence of this result, we obtain a non-trivial class of weights that works for a Wiener-Wintner-type ergodic theorem. For this, write 
\[
\mathcal{W}_c=\big\{\alpha=\{\alpha_k\}\su\mathbb C:\ \exists \ l(\al)\in\mathbb{C} \text{ \ such that }\lim_{k\to\infty}\alpha_k=l(\al)\big\}
\]
and note that $\mathcal{W}_c\subset W_\infty$.
\begin{cor}\label{c31}
If $T\in DS^+$, then $bWW_p(\mathcal{W}_c)=L_p$ for $1\leq p<\infty$ and $WW_p(\mathcal{W}_c)=L_p$ for $2\leq p<\infty$.
\end{cor}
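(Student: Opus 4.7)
The plan is to combine Theorem~\ref{t31} with the noncommutative Dunford--Schwartz pointwise ergodic theorem. Since $\mathcal{W}_c\subset W_\infty$, Proposition~\ref{p31} immediately gives that $\{M_n^\alpha(T)\}_{n,\alpha}$ is $\mathcal{W}_c$-b.u.e.m.\ at zero on $L_p$ for $1\leq p<\infty$ (respectively, $\mathcal{W}_c$-u.e.m.\ at zero for $2\leq p<\infty$), so Theorem~\ref{t31} yields that $bWW_p(\mathcal{W}_c)$ (resp.\ $WW_p(\mathcal{W}_c)$) is closed in $L_p$. It therefore suffices to prove $L_p\subseteq bWW_p(\mathcal{W}_c)$; I will do this by constructing the required projection directly, rather than by exhibiting a dense subset.

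Fix $x\in L_p$ and $\epsilon>0$. For each $\alpha\in\mathcal{W}_c$, decompose $\alpha_k=l(\alpha)+\beta_k(\alpha)$ where $\beta(\alpha)\in c_0\subset W_\infty$, so that
\[
M_n^\alpha(T)(x) \;=\; l(\alpha)\,M_n(T)(x) \;+\; M_n^{\beta(\alpha)}(T)(x).
\]
The first summand is handled by the noncommutative Dunford--Schwartz pointwise ergodic theorem (see \cite{jx}): there is $e_1\in\mathcal{P}(\mathcal{M})$ with $\tau(e_1^\perp)\leq\epsilon/2$, independent of $\alpha$, such that $\{e_1M_n(T)(x)e_1\}$ converges in $\mathcal{M}$.

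For the second summand I exploit the uniformity in $\alpha\in W_\infty$ built into Proposition~\ref{p31}. By scaling (Definition~\ref{d31} is stated for small $x$, but each $M_n^\alpha(T)$ is linear in $x$) I can produce $e_2\in\mathcal{P}(\mathcal{M})$ with $\tau(e_2^\perp)\leq\epsilon/2$ and a constant $C=C(x,\epsilon)>0$ such that
\[
\sup_{n}\|e_2 M_n^\alpha(T)(x) e_2\|_\infty \;\leq\; C\,|\alpha|_{W_\infty} \qquad\text{for every } \alpha\in W_\infty.
\]
Applying this with the $k$-th Kronecker sequence gives $\|e_2 T^k(x) e_2\|_\infty\leq C(k+1)$, so each $e_2T^k(x)e_2$ lies in $\mathcal{M}$. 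For any $\beta\in c_0$ I split $\beta=\beta^{(K)}+\beta^{[K]}$ at level $K$: the head contributes at most $\tfrac{C}{n}\sum_{k<K}|\beta_k|(k+1)$, which tends to $0$ as $n\to\infty$, while the tail is uniformly bounded in $n$ by $C\sup_{k\geq K}|\beta_k|$, which vanishes as $K\to\infty$. Hence $e_2 M_n^{\beta(\alpha)}(T)(x) e_2\to 0$ in $\mathcal{M}$ for every $\alpha\in\mathcal{W}_c$, with the \emph{same} projection $e_2$. Setting $e=e_1\wedge e_2$ yields $\tau(e^\perp)\leq\epsilon$, and then $\{eM_n^\alpha(T)(x)e\}$ converges in $\mathcal{M}$ for every $\alpha\in\mathcal{W}_c$.

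The main obstacle is precisely this uniformity in $\alpha$: pointwise (for fixed $\alpha$) convergence is routine, but producing a single projection that serves every $\alpha\in\mathcal{W}_c$ simultaneously is exactly what the uniform (in $\alpha\in W_\infty$) Banach principle from Proposition~\ref{p31} is built to supply. For the a.u.\ case with $2\leq p<\infty$ the argument is identical after replacing the two-sided compressions $e(\cdot)e$ by $(\cdot)e$ and using the u.e.m.\ form of Proposition~\ref{p31} with $h(s)=\sqrt{s}(2+\sqrt{s})$; since $h$ vanishes at $0$, the tail-killing step still goes through.
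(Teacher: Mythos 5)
Your argument is correct, but it follows a genuinely different route from the paper's. The paper proves membership only for $x$ in the dense subspace $L_1\cap\mathcal{M}$: there the $L_\infty$-contractivity of $T$ gives $\sup_k\|T^k(x)e\|_\infty\leq\|x\|_\infty$ for free, so after writing $\alpha_k=l(\alpha)+(\alpha_k-l(\alpha))$ the $c_0$-part is killed by ordinary Ces\`{a}ro convergence of $\tfrac1n\sum_{k<n}|\alpha_k-l(\alpha)|$, and the limit is even identified as $l(\alpha)\widehat{x}$; the extension to all of $L_p$ is then exactly the closedness supplied by Proposition~\ref{p31} and Theorem~\ref{t31}. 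You instead treat an arbitrary $x\in L_p$ directly, replacing the trivial uniform bound on $\|T^k(x)e\|_\infty$ by a second, more clever use of the uniformity in $\alpha$ of Proposition~\ref{p31}: applying the single maximal projection to Kronecker sequences to get the (growing) termwise bounds for the head, and to the truncated tails $\beta^{[K]}$ to get the bound $Ch(\sup_{k\geq K}|\beta_k|)$ uniformly in $n$. This is sound --- the key point, that one projection serves all $\alpha\in W_\infty$ simultaneously, is precisely what Proposition~\ref{p31} asserts --- and it makes your invocation of Theorem~\ref{t31} for closedness redundant, since you prove the full containment $L_p\subseteq bWW_p(\mathcal{W}_c)$ rather than density. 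What the paper's route buys is brevity and an explicit limit; what yours buys is a self-contained illustration of how the uniform Banach principle substitutes for the $L_\infty$-bound on a dense set. One small point you share with the paper and should make explicit: after extracting $e_1$ from the b.a.u.\ (a.u.) convergence $M_n(T)(x)\to\widehat{x}$, one should shrink $e_1$ slightly so that $e_1\widehat{x}e_1\in\mathcal{M}$ (possible since $\widehat{x}$ is $\tau$-measurable), so that the compressed sequence genuinely converges \emph{in} $\mathcal{M}$ as required by the definition of $bWW_p$.
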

\begin{proof}
Assume $x\in L_1\cap\mathcal{M}$ and let $\epsilon>0$. Since $x\in L_2$, there exists $\widehat{x}\in L_2$ such that $\{M_n(T)(x)\}_{n=1}^{\infty}$ converges to $\wh x$ a.u. (see, for example, \cite[Theorem 4.1]{li1}). Let $e\in\mathcal{P}(\mathcal{M})$ be such that $\tau(e^\perp)\leq\epsilon$ and $\|(M_n(T)(x)-\widehat{x})e\|_\infty\to0$ as $n\to\infty$.

Assume $\alpha=\{\alpha_k\}_{k=0}^{\infty}\in\mathcal{W}_c$.
Then, since $T$ is a contraction on $\mathcal{M}$,
\begin{align*}
\|(M_n^\alpha(T)(x)-l(\al)\widehat{x})e\|_\infty
&\leq\frac{1}{n}\sum_{k=0}^{n-1}|\alpha_k-l(\al)|\|T^k(x)e\|_\infty\\
&+|l(\al)|\|(M_n(T)(x)-\widehat{x})e\|_\infty \\
&\leq\|x\|_\infty\frac{1}{n}\sum_{k=0}^{n-1}|\alpha_k-l(\al)|+|l(\al)|\|(M_n(T)(x)-\widehat{x})e\|_\infty
\end{align*}
Therefore 
$\|(M_n^\alpha(T)(x)-l(\al)\widehat{x})e\|_\infty\to0$, and so $\{M_n^\alpha(T)(x)e\}$ converges in $\mathcal{M}$. Since $\alpha\in\mathcal{W}_c$ and $\epsilon>0$ were arbitrary, it follows that $x\in WW_p(\mathcal{W}_c)$. Therefore, since $L_1\cap\mathcal{M}$ is dense in $L_p$, it follows, by Proposition \ref{p31} and Theorem \ref{t31}, that $bWW_p(c)=L_p$ with $1\leq p<\infty$ and that $WW_p(c)=L_p$ with $2\leq p<\infty$.
\end{proof}

Next, we discuss a few results that extend the weights allowed in the previous results. In the following results, if $\mathcal{W}\subseteq W_1$, then let $\overline{\mathcal{W}}$ denote the closure of $\mathcal{W}$ in $W_1$ with respect to the $W_1$-seminorm $\|\cdot\|_{W_1}$.

\begin{pro}\label{p33}
Let $1\leq p<\infty$, and assume that $T\in DS^+$ is such that $\{T^n\}_{n=0}^{\infty}$ is b.u.e.m. (u.e.m.) at zero on $(L_p,\|\cdot\|_p)$. If $\mathcal{W}\subseteq W_1$, then 
$$bWW_p(\mathcal{W})=bWW_p(\overline{\mathcal{W}}) \ (\text{respectively, } WW_p(\mathcal{W})=WW_p(\overline{\mathcal{W}})).$$
\end{pro}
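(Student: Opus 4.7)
The inclusion $bWW_p(\overline{\mathcal W})\subseteq bWW_p(\mathcal W)$ is immediate from $\mathcal W\subseteq\overline{\mathcal W}$, so the content lies in the reverse inclusion. Fix $x\in bWW_p(\mathcal W)\sm\{0\}$ and $\ep>0$; the plan is to build a single projection $e\in\mathcal P(\mathcal M)$ with $\tau(e^\perp)\le\ep$ such that $\{eM_n^\beta(T)(x)e\}_n$ converges in $\mathcal M$ for every $\beta\in\overline{\mathcal W}$, and $e$ will be the meet of two projections produced by two different sources.

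First, from $x\in bWW_p(\mathcal W)$ I extract $e_1$ with $\tau(e_1^\perp)\le\ep/2$ yielding convergence of $\{e_1M_n^\alpha(T)(x)e_1\}_n$ in $\mathcal M$ for every $\alpha\in\mathcal W$. Second, I invoke the b.u.e.m.\ hypothesis on $\{T^n\}$ at zero on $(L_p,\|\cdot\|_p)$: the values $\dt'=1$ and $\ep'=\ep/2$ produce some $\gm>0$; applied to $y=(\gm/2)x/\|x\|_p$ (which satisfies $\|y\|_p<\gm$) this yields $e_2$ with $\tau(e_2^\perp)\le\ep/2$ and $\sup_k\|e_2T^k(y)e_2\|_\infty\le 1$. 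Rescaling back gives $\sup_k\|e_2T^k(x)e_2\|_\infty\le C:=2\|x\|_p/\gm$. Setting $e=e_1\wedge e_2$, one has $\tau(e^\perp)\le\ep$, and since $e\le e_i$ both properties persist with $e$ in place of $e_i$; in particular, $\sup_k\|eT^k(x)e\|_\infty\le C$.

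For the Cauchy argument, fix $\beta\in\overline{\mathcal W}$ and $\dt>0$. Choose $\alpha\in\mathcal W$ with $\|\beta-\alpha\|_{W_1}<\dt/(6C)$, which is possible by the definition of $\overline{\mathcal W}$, and write $\eta=\beta-\alpha\in W_1$. Linearity of $M_n^\cdot(T)(x)$ in the weight and the triangle inequality give, for all $l,n$,
\[
\|e(M_l^\beta(T)(x)-M_n^\beta(T)(x))e\|_\infty\le\|e(M_l^\alpha(T)(x)-M_n^\alpha(T)(x))e\|_\infty+\|eM_l^\eta(T)(x)e\|_\infty+\|eM_n^\eta(T)(x)e\|_\infty.
\]
The first term is below $\dt/3$ for $l,n$ large, by the Cauchy property coming from $\alpha\in\mathcal W$. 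For the other two, the uniform bound on $\{eT^k(x)e\}_k$ gives $\|eM_n^\eta(T)(x)e\|_\infty\le C\cdot\frac1n\sum_{k=0}^{n-1}|\eta_k|$, and since $\limsup_n\frac1n\sum_{k=0}^{n-1}|\eta_k|=\|\eta\|_{W_1}<\dt/(6C)$, this lies below $\dt/3$ for all sufficiently large $n$. Combining, $\{eM_n^\beta(T)(x)e\}_n$ is Cauchy in $\mathcal M$, hence convergent, yielding $x\in bWW_p(\overline{\mathcal W})$. The a.u./u.e.m.\ version is verbatim, with right-multiplication by $e$ replacing the sandwich.

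The main obstacle is the discrepancy between the $\limsup$-type seminorm $\|\cdot\|_{W_1}$ governing the closure $\overline{\mathcal W}$ and the sup functional $|\cdot|_{W_1}$ governing the bound in Proposition \ref{p32}: a naive use of that proposition would ask for $|\beta-\alpha|_{W_1}$ to be small, which does not follow from smallness of $\|\beta-\alpha\|_{W_1}$. The remedy is to bypass Proposition \ref{p32} entirely, secure instead a single projection $e$ for which $\{eT^k(x)e\}_k$ is uniformly norm-bounded, and exploit the $\limsup$ description only in the tail $n\ge N$, which is all that is needed for the Cauchy condition; the initial portion, where the Ces\'aro averages of $|\eta_k|$ might be large, is absorbed into the choice of $N$.
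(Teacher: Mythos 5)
Your proposal is correct and follows essentially the same route as the paper's proof: one projection from membership in $bWW_p(\mathcal W)$, a second from the b.u.e.m.\ hypothesis giving a uniform bound on $\{eT^k(x)e\}_k$, their meet, and a Cauchy estimate that uses the $\limsup$-seminorm only in the tail $n\ge N$. The closing remark about why one cannot simply invoke Proposition \ref{p32} (the $\sup$ versus $\limsup$ discrepancy) is an accurate reading of the same subtlety the paper's argument is designed around.
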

\begin{proof}
We will prove the case where $\{T^n\}_{n=0}^{\infty}$ is b.u.e.m. at zero on $(L_p,\|\cdot\|_p)$ only. Obviously, $bWW_p(\overline{\mathcal{W}})\subseteq bWW_p(\mathcal{W})$; hence, the other inclusion is all that remains to be proven.

Assume $x\in bWW_q(\mathcal{W})$ and $\epsilon>0$. Then, the fact that $x\in bWW_p(\mathcal{W})$ implies that there exists $e\in\mathcal{P}(\mathcal{M})$ such that $\tau(e^\perp)\leq\frac{\epsilon}{2}$ and $\{eM_n^\beta(T)(x)e\}_{n=1}^{\infty}$ converges in $\mathcal{M}$ for every $\beta\in\mathcal{W}$.

Since $\{T^n\}_{n=0}^{\infty}$ is b.u.e.m. at zero on $(L_p,\|\cdot\|_p)$, there exists $\gamma>0$ such that, whenever $\|y\|_p<\gamma$, there exists $f_y\in\mathcal{P}(\mathcal{M})$ such that $\tau(f_y^\perp)\leq\frac{\epsilon}{2}$ and $\sup_{n}\|f_yT^n(y)f_y\|_\infty\leq1.$ Let $\eta>0$ be such that $\eta\|x\|_p<\gamma$, and let $f\in\mathcal{P}(\mathcal{M})$ be the projection such that $\tau(f^\perp)\leq\frac{\epsilon}{2}$ and $\sup_{n}\|fT^n(x)f\|_\infty\leq\frac{1}{\eta}$.

Let $g=e\wedge f$. Then $\tau(g^\perp)\leq\epsilon$, $\sup_{n}\|gT^n(x)g\|_\infty\leq\frac{1}{\eta}$, and $\{gM_n^\beta(T)(x)g\}_{n=1}^{\infty}$ converges in $\mathcal{M}$ for every $\beta\in\mathcal{W}$.

Assume that $\alpha=\{\alpha_k\}_{k=0}^{\infty}\in\overline{\mathcal{W}}$ and $\delta>0$. Let $\beta=\{\beta_k\}_{k=0}^{\infty}\in\mathcal{W}$ be such that
$$\delta>\|\alpha-\beta\|_{W_1}=\limsup_{n\to\infty}\frac{1}{n}\sum_{k=0}^{n-1}|\alpha_k-\beta_k|.$$ Let $N_1$ be such that $\frac{1}{n}\sum_{k=0}^{n-1}|\alpha_k-\beta_k|<\delta$ for $n\geq N_1$.
Since $\beta\in\mathcal{W}$, we know that $\{gM_n^\beta(T)(x)g\}_{n=0}^{\infty}$ converges, so that the sequence is Cauchy. Let $N_2$ be such that $\|g(M_n^\beta(T)(x)-M_m^\beta(T)(x))g\|_\infty\leq\delta$ for $m,n\geq N_2$.
Let $N=\max\{N_1,N_2\}$. Then, for $m,n\geq N$, we find that
\begin{align*}
\|g(M_n^\alpha(T)(x)-M_m^\alpha(T)(x))g\|_\infty
&\leq\|gM_n^{\alpha-\beta}(T)(x)g\|_\infty
+\|gM_m^{\alpha-\beta}(T)(x)g\|_\infty \\
&\ \ \ \ \  +\|g(M_n^\beta(T)(x)-M_m^\beta(T)(x))g\|_\infty \\
&\leq \frac{1}{n}\sum_{k=0}^{n-1}|\alpha_k-\beta_k|\|gT^k(x)g\|_\infty \\
&\ \ \ \ \  +\frac{1}{m}\sum_{k=0}^{m-1}|\alpha_k-\beta_k|\|gT^k(x)g\|_\infty
+\delta \\
&\leq 2\delta\sup_{n\in\mathbb{N}_0}\|gT^n(x)g\|_\infty + \delta
\leq\left(\frac{2}{\eta}+1\right)\delta.
\end{align*}
Since $\delta>0$ was arbitrary, it follows that $\{gM_n^\alpha(T)(x)g\}_{n=1}^{\infty}$ is Cauchy, and hence, it is convergent in $\mathcal{M}$. Since $\alpha\in\overline{\mathcal{W}}$ is arbitrary, it follows that $x\in bWW_p(\overline{\mathcal{W}})$, proving the claim.
\end{proof}

A function $f:\mathbb{Z}\to\mathbb{C}$ is a {\it trigonometic polynomial} if there exists $\lambda_1,...,\lambda_n\in\mathbb{T}$ and $r_1,...,r_n\in\mathbb{C}$ such that $f(k)=\sum_{j=1}^{n}r_j\lambda_j^k$ for every $k\in\mathbb{Z}$. Note that $\{f(k)\}_{k=0}^\infty\in W_\infty$. A sequence $\alpha\in W_1$ will be said to be {\it$r$-Besicovich} if, for every $\delta>0$, there exists a trigonometric polynomial $f$ such that $\|\alpha-f\|_{W_r}<\delta$. In other words, $\alpha$ belongs to the $W_r$-seminorm closure of the trigonometric polynomials.

The convergence of averages weighted by \textit{bounded} Besicovich sequences were studied in the noncommutative setting in \cite{cls,cl1,li2}, to name a few. Having the assertion of Proposition \ref{p32} above, and in view of Proposition \ref{p22}, we deduce the following generalization of those results for $1$-Besicovich sequences.

\begin{cor}\label{c32}
Assume that $\mathcal{M}$ has a separable predual.
Let $1\leq p<\infty$ and $T\in DS^+$ be such that $\{T^n\}_{n=0}^{\infty}$ is b.u.e.m. (u.e.m.) at zero on $L_p$. Then, for every $1$-Besicovich sequence $\alpha$,
the averages $\{M_n^\alpha(T)(x)\}_{n=1}^{\infty}$ converge b.a.u. (respectively, a.u.) as $n\to\infty$ for every $x\in L_p(\mathcal{M},\tau)$.
\end{cor}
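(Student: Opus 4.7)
The plan is to combine the noncommutative Banach principle from Section \ref{s3} with known convergence results for bounded Besicovich weights, extending from the set $\mathcal{T}$ of trigonometric polynomial sequences to its $W_1$-seminorm closure $\overline{\mathcal{T}}$ (which equals the class of $1$-Besicovich sequences) via an approximation argument modeled on the proof of Proposition \ref{p33}. I would sketch only the b.a.u.\ case; the a.u.\ case is analogous.

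First I would fix a $1$-Besicovich sequence $\alpha\in\overline{\mathcal{T}}\subseteq W_1$. By Proposition \ref{p32} applied to the singleton $\{\alpha\}$, the sequence $\{M_n^\alpha(T)\}_n$ is b.u.e.m.\ at zero on $(L_p,\|\cdot\|_p)$, so Proposition \ref{p22} yields that
\[
\mathcal{C}^\alpha:=\bigl\{x\in L_p:\{M_n^\alpha(T)(x)\}_n\text{ converges b.a.u.}\bigr\}
\]
is a closed subspace of $L_p$. It then suffices to show $\mathcal{C}^\alpha=L_p$, which I would accomplish in two steps: (a) $\mathcal{C}^\beta=L_p$ for every $\beta\in\mathcal{T}$, and (b) an approximation argument passing from $\mathcal{T}$ to $\overline{\mathcal{T}}$.

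For (a), linearity reduces the problem to monomial weights $\beta(k)=\lambda^k$ with $\lambda\in\mathbb{T}$. Since $\{\lambda^k\}$ is a bounded Besicovich sequence (trivially), b.a.u.\ convergence of $\{M_n^{\{\lambda^k\}}(T)(x)\}_n$ for every $x\in L_p$ is exactly the content of the noncommutative bounded-Besicovich ergodic theorems established in \cite{cl1,cls,li2}, which is where the separable-predual hypothesis on $\mathcal{M}$ enters. For (b), fix $x\in L_p$ and $\epsilon>0$, choose $\beta_m\in\mathcal{T}$ with $\|\alpha-\beta_m\|_{W_1}\to 0$, and for each $m$ select $e_m\in\mathcal{P}(\mathcal{M})$ (via (a)) with $\tau(e_m^\perp)\leq\epsilon/2^{m+1}$ and $\{e_mM_n^{\beta_m}(T)(x)e_m\}_n$ convergent in $\mathcal{M}$. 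Applying the b.u.e.m.\ of $\{T^n\}$ at zero on $L_p$ to a sufficiently small scaling of $x$, obtain $f\in\mathcal{P}(\mathcal{M})$ with $\tau(f^\perp)\leq\epsilon/2$ and $C:=\sup_n\|fT^n(x)f\|_\infty<\infty$. Setting $g=f\wedge\bigwedge_m e_m$ gives $\tau(g^\perp)\leq\epsilon$. For any $\delta>0$, pick $m$ with $\|\alpha-\beta_m\|_{W_1}<\delta/(3C)$ and $N_m$ with $\frac{1}{k}\sum_{i=0}^{k-1}|\alpha_i-\beta_{m,i}|\leq\delta/(3C)$ for all $k\geq N_m$; then the triangle inequality together with $\|gT^i(x)g\|_\infty\leq C$ yields
\[
\|g(M_n^\alpha(T)(x)-M_l^\alpha(T)(x))g\|_\infty\leq\|g(M_n^{\beta_m}(T)(x)-M_l^{\beta_m}(T)(x))g\|_\infty+\tfrac{2\delta}{3}
\]
for all $n,l\geq N_m$, and the first summand is eventually at most $\delta/3$ by Cauchy-ness of $\{gM_n^{\beta_m}(T)(x)g\}$. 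Hence $\{gM_n^\alpha(T)(x)g\}$ is Cauchy in $\mathcal{M}$, so $x\in\mathcal{C}^\alpha$.

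The main hurdle is the bookkeeping of projections in step (b): one must produce a single $g$ of small complement measure that simultaneously supplies Cauchy-ness of each $\{gM_n^{\beta_m}(T)(x)g\}$ (via the $e_m$'s from step (a)) and a uniform bound on $\{gT^n(x)g\}$ in $\mathcal{M}$ (via the hypothesis that $\{T^n\}$ is b.u.e.m.\ at zero). This is precisely what the two applications of b.u.e.m.\ — Proposition \ref{p32} combined with the standing hypothesis on the iterates — enable. Step (a) is then a citation of the bounded-Besicovich noncommutative results.
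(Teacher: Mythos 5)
Your proposal is correct and follows essentially the route the paper intends: the paper gives no written proof of this corollary, but points to Proposition \ref{p32} together with Proposition \ref{p22} and the cited bounded-Besicovich results, and your step (b) is exactly the approximation argument of Proposition \ref{p33} specialized to the trigonometric polynomials, with the separable-predual hypothesis entering through the citations in step (a) just as you say. The only cosmetic point is that once step (b) handles every $x\in L_p$ directly, the closedness of $\mathcal{C}^\alpha$ obtained from Propositions \ref{p32} and \ref{p22} is not actually needed.
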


For general $T\in DS^+(\mathcal{M},\tau)$, one can still obtain a similar, but weaker, version of Proposition \ref{p33}. Since the core arguments are similar, we omit the proof.

\begin{cor}\label{c33}
Let $1\leq p<\infty$, $T\in DS^+(\mathcal{M},\tau)$, and $\mathcal{W}\subseteq W_\infty$. If $L_p(\mathcal{M},\tau)=bWW_p(\mathcal{W})$, then actually $L_p(\mathcal{M},\tau)=bWW_p(\overline{\mathcal{W}}\cap W_\infty)$. When $2\leq p<\infty$, the same holds true when $bWW_p$ is replaced with $WW_p$.
\end{cor}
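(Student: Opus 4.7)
The plan is to combine the closedness given by Theorem \ref{t31} with a density argument on $L_1 \cap \mathcal{M}$, circumventing the missing b.u.e.m.\ hypothesis on $\{T^n\}$ (which was the pivotal tool in Proposition \ref{p33}).

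First I would invoke Theorem \ref{t31} with $\mathcal{W}' := \overline{\mathcal{W}} \cap W_\infty$ and $r = \infty$: since $\mathcal{W}' \subseteq W_\infty$, Proposition \ref{p31} tells us that $\{M_n^\alpha(T)\}_{n,\alpha}$ is $\mathcal{W}'$-b.u.e.m.\ (respectively, $\mathcal{W}'$-u.e.m.) at zero on $L_p$, so Theorem \ref{t31} forces $bWW_p(\mathcal{W}')$ (respectively, $WW_p(\mathcal{W}')$) to be closed in $L_p$.

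Second, I would show $L_1 \cap \mathcal{M} \subseteq bWW_p(\overline{\mathcal{W}} \cap W_\infty)$. Fix $x \in L_1 \cap \mathcal{M}$ and $\epsilon > 0$, and use the hypothesis $L_p = bWW_p(\mathcal{W})$ to choose $e \in \mathcal{P}(\mathcal{M})$ with $\tau(e^\perp) \leq \epsilon$ on which $\{e M_n^\beta(T)(x) e\}_n$ converges in $\mathcal{M}$ for every $\beta \in \mathcal{W}$. Given $\alpha \in \overline{\mathcal{W}} \cap W_\infty$ and $\delta > 0$, pick $\beta \in \mathcal{W}$ with $\|\alpha - \beta\|_{W_1} < \delta / (3(\|x\|_\infty + 1))$. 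Since $T$ contracts $\mathcal{M}$, $\|T^k(x)\|_\infty \leq \|x\|_\infty$ for every $k$, whence
$$\|M_n^{\alpha - \beta}(T)(x)\|_\infty \leq \left(\frac{1}{n}\sum_{k=0}^{n-1}|\alpha_k - \beta_k|\right)\|x\|_\infty \leq \frac{\delta}{3}$$
for all $n$ sufficiently large. Combining this $\mathcal{M}$-norm bound with the Cauchy property of $\{eM_n^\beta(T)(x) e\}$, via the identity $M_n^\alpha = M_n^\beta + M_n^{\alpha - \beta}$ and the triangle inequality, yields $\|e(M_n^\alpha - M_m^\alpha)(T)(x) e\|_\infty \leq \delta$ for $m, n$ large enough, so $\{eM_n^\alpha(T)(x) e\}_n$ is Cauchy and hence convergent in $\mathcal{M}$. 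Because $e$ depends only on $x$ and $\epsilon$, a single projection serves all $\alpha \in \overline{\mathcal{W}} \cap W_\infty$ simultaneously, giving $x \in bWW_p(\overline{\mathcal{W}} \cap W_\infty)$.

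Density of $L_1 \cap \mathcal{M}$ in $L_p$ together with the closedness established in the first step then yields $L_p = bWW_p(\overline{\mathcal{W}} \cap W_\infty)$. The main conceptual hurdle compared to Proposition \ref{p33} is the absence of any uniform $\sup_n \|gT^n(x)g\|_\infty$ bound; restricting to the bounded dense core $L_1 \cap \mathcal{M}$ dispels this issue since there $\|T^k(x)\|_\infty \leq \|x\|_\infty$ is automatic, and this is precisely why we cannot upgrade the conclusion past $\overline{\mathcal{W}} \cap W_\infty$. The $WW_p$ version runs identically, using the u.e.m.\ part of Proposition \ref{p31} and replacing the two-sided compressions $eM_n^\alpha(T)(x) e$ with the one-sided compressions $M_n^\alpha(T)(x) e$ throughout.
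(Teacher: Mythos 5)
Your proof is correct and follows exactly the route the paper intends: the author omits the proof with the remark that "the core arguments are similar" to Proposition \ref{p33}, and your argument is the natural adaptation — replace the b.u.e.m.-derived bound $\sup_n\|gT^n(x)g\|_\infty\leq 1/\eta$ by the automatic bound $\|T^k(x)\|_\infty\leq\|x\|_\infty$ on the dense core $L_1\cap\mathcal{M}$, then conclude via the closedness of $bWW_p(\overline{\mathcal{W}}\cap W_\infty)$ supplied by Proposition \ref{p31} and Theorem \ref{t31}. All steps check out, including the correct attribution of the $W_\infty$ restriction and the observation that the single projection $e$ serves all $\alpha\in\overline{\mathcal{W}}\cap W_\infty$ at once.
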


As some immediate consequences of this result, one can extend the main result of \cite{li2} to hold for bounded Besicovich sequences (although this was already known by the results of \cite{hs}). One also finds that Corollary \ref{c31} above extends to bounded sequences $\alpha\in W_\infty$ that only converge to some $\ell(\alpha)\in\mathbb{C}$ along a subsequence of density $1$, which is a condition that occurs frequently when studying weakly mixing measure preserving dynamical systems. This also generalizes \cite[Theorem 3.1]{ob} by proving that one only needs a single projection for the a.u. or b.a.u. convergence of subsequential weighted averages of a bounded sequence along any density $1$ subsequence.

\section{Convergence of averages weighted by Hartman sequences}\label{s4}

In this section, we will consider operators $T\in DS^+(\mathcal{M},\tau)$ such that the iterates $\{T^n(x)\}_{n=0}^{\infty}$ converge b.a.u. or a.u. for every flight vector of $T$ on $L_q$ for some $1<q<\infty$ (or $2\leq q<\infty$ in the a.u.-case). It will be shown in Section \ref{s6} that this assumption is naturally satisfied in numerous settings. The Wiener-Wintner type theorems proved below concern some sets in $W_1$ that contain all bounded Hartman sequences, which as a special case means a noncommutative version of Bourgain's Return Times Theorem holds for these operators (see Corollary \ref{c41}).

A sequence $\alpha=\{\alpha_k\}_{k=0}^{\infty}\subset\mathbb{C}$ is called a \textit{Hartman sequence} if
$$\lim_{n\to\infty}\frac{1}{n}\sum_{k=0}^{n-1}\alpha_k\lambda^k=:c_\alpha(\overline{\lambda})$$ exists for every $\lambda\in\mathbb{T}$. It is known that $\{\lambda\in\mathbb{T}:c_\alpha(\lambda)\neq0\}$ is a countable set for every Hartman sequence $\alpha$. Let $H$ denote the set of all Hartman sequences in $W_1$. As noted in \cite{lot}, $H$ is a closed subspace of $W_1$. The $r$-Besicovich (bounded Besicovich) sequences discussed in Section \ref{s3} are an important class of Hartman sequences in $W_r$ for $1\leq r<\infty$ ($r=\infty$).

If $X$ is a Banach space, we denote its (continuous) dual space by $X^*$. A set $E\subseteq X$ is weakly conditionally compact if every sequence in $E$ contains a weakly convergent subsequence. In other words, for every sequence $\{x_n\}_{n=0}^{\infty}\subseteq E$, there exists a subsequence $\{x_{n_j}\}_{j=0}^{\infty}$ of $\{x_n\}_{n=0}^{\infty}$ and $x\in E$ such that $\{\phi(x_{n_j})\}_{j=0}^{\infty}$ converges to $\phi(x)$ in $\mathbb{C}$ for every $\phi\in X^*$. A linear operator $S:X\to X$ is \textit{weakly almost periodic} if $\{S^n(x):n\in\mathbb{N}_0\}$ is weakly conditionally compact for every $x\in X$.

Since $T$ is a contraction on $L_q$, it is also power-bounded on $L_q$. It is known that $L_q$ is a reflexive Banach space when $1<q<\infty$ (see \cite{ne}). From this, if $1<q<\infty$, it can be shown that $T$ is a weakly almost periodic operator on $L_q$.

The following is a summary of some of the results (namely Theorems 1.2 and 2.1) in \cite{lot} relating weakly almost periodic operators and Hartman sequences.

\begin{teo}\label{t41}\cite{lot} Let $\alpha=\{\alpha_k\}_{k=0}^{\infty}\in W_{1^+}\cap H$. Then for every weakly almost periodic operator $S$ on a Banach space $X$,
$$L(\alpha,S)(x):=\lim_{n\to\infty}\frac{1}{n}\sum_{k=0}^{n-1}\alpha_kS^k(x)\text{ exists in norm for every }x\in X.$$
Moreover, if $X$ is a Hilbert space and $S$ is a contraction, then, for every $x\in X$,
$$L(\alpha,S)(x)=\sum_{\lambda\in\mathbb{T}}c_{\alpha}(\lambda)E(\overline{\lambda})(x),$$ where, for every $\lambda\in\mathbb{T}$, $E(\lambda)$ is the projection of $X$ onto $\overline{(\lambda1-S)X}$ (noting only countably many terms of the sum are nonzero).
\end{teo}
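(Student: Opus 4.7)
The natural framework is the Jacobs--de Leeuw--Glicksberg decomposition of the weakly almost periodic operator $S$, mentioned earlier in the introduction. It gives a topological direct sum $X=X_r(S)\oplus\mathcal{V}(S)$, where $X_r(S)=\overline{\operatorname{span}}\{x:Sx=\lambda x\text{ for some }\lambda\in\mathbb{T}\}$ is the reversible part and $\mathcal{V}(S)$ is the flight part, both $S$-invariant. I would split $x$ accordingly and prove norm convergence of $M_n^\alpha(S)(x)$ on each summand. Note that WAP orbits are weakly compact and hence norm-bounded, so $C:=\sup_k\|S^k\|<\infty$ by uniform boundedness; combined with $\alpha\in W_{1^+}\subseteq W_1$ this yields the uniform operator bound $\sup_n\|M_n^\alpha(S)\|\leq C\,|\alpha|_{W_1}$.

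For the reversible summand, the Hartman hypothesis does the work. On an eigenvector $Sx=\lambda x$ with $\lambda\in\mathbb{T}$,
\[
M_n^\alpha(S)(x)=\Big(\frac{1}{n}\sum_{k=0}^{n-1}\alpha_k\lambda^k\Big)x\ \longrightarrow\ c_\alpha(\overline{\lambda})\,x,
\]
and this extends by linearity to the span of unimodular eigenvectors, then by an $\epsilon/3$ argument using the uniform bound above to the closure $X_r(S)$.

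On the flight part, which is the main technical obstacle, I need to verify $M_n^\alpha(S)(x)\to 0$ for every $x\in\mathcal{V}(S)$. My plan exploits $\alpha\in W_{1^+}$ via two successive approximations. Given $\epsilon>0$: first, choose $r>1$ and $\beta\in W_r$ with $\|\alpha-\beta\|_{W_1}<\epsilon$, so that $\limsup_n\|M_n^{\alpha-\beta}(S)(x)\|\leq C\|x\|\epsilon$ by the uniform bound. Second, approximate $\beta$ by trigonometric polynomials of the form $f(k)=\sum_{j=1}^m r_j\lambda_j^k$, each of which reduces $M_n^f(S)(x)$ to a finite sum of unweighted ergodic averages $\frac{1}{n}\sum_{k=0}^{n-1}(\lambda_j S)^k(x)$ for the WAP operators $\lambda_j S$; since $x\in\mathcal{V}(S)\subseteq\mathcal{V}(\lambda_j S)$, each of these vanishes by the standard mean ergodic theorem on the flight space. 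The delicate part---where I expect to spend most of the effort---is bridging the gap between a $W_r$-seminorm approximation of the weight and a norm estimate for $M_n^{\beta-f}(S)$ evaluated on the flight vector; the natural route is a H\"older-type inequality
\[
\|M_n^{\beta-f}(S)(x)\|\leq\Big(\tfrac{1}{n}\sum_{k=0}^{n-1}|\beta_k-f_k|^r\Big)^{1/r}\Big(\tfrac{1}{n}\sum_{k=0}^{n-1}\|S^k x\|^{r'}\Big)^{1/r'},
\]
with the second factor controlled by $C\|x\|$ via power-boundedness. This is the step that genuinely uses $r>1$, and hence the $W_{1^+}$ hypothesis.

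For the Hilbert-contraction case, the JdLG decomposition is orthogonal, the unimodular eigenspaces $\ker(\lambda I-S)$ are mutually orthogonal, and $E(\overline{\lambda})$ can be identified with the orthogonal projection onto $\ker(\overline{\lambda}I-S)$. Decomposing $x=\sum_{\lambda\in\mathbb{T}}E(\overline{\lambda})x+x_f$ with $x_f\in\mathcal{V}(S)$---only countably many summands survive because $\{\lambda:c_\alpha(\lambda)\neq 0\}$ is countable for any Hartman sequence---applying $M_n^\alpha(S)$ termwise and combining the eigenvector limit from the second paragraph with $M_n^\alpha(S)(x_f)\to 0$ from the flight-part analysis yields the stated formula
\[
L(\alpha,S)(x)=\sum_{\lambda\in\mathbb{T}}c_\alpha(\lambda)E(\overline{\lambda})x.
\]
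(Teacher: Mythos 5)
This theorem is imported from \cite{lot} (Theorems 1.2 and 2.1 there); the paper gives no proof of its own, so your attempt can only be judged on its internal correctness. The reversible-part argument, the uniform bound $\sup_n\|M_n^\alpha(S)\|\leq C|\alpha|_{W_1}$, the first approximation $\alpha\rightsquigarrow\beta\in W_r$ in the $W_1$-seminorm, and the H\"older estimate are all fine. The gap is in the second approximation on the flight part: you propose to approximate $\beta\in W_r$ by trigonometric polynomials in the $W_r$-seminorm, but the sequences admitting such approximations are, by definition, exactly the $r$-Besicovich sequences, and these form a \emph{proper} subclass of $W_r\cap H$. Nothing in the hypotheses puts $\beta$ (or even $\alpha$) in that subclass: $\beta$ is merely some $W_r$-element that $W_1$-approximates $\alpha$, and need not even be Hartman; and even Hartman does not imply Besicovich --- the paper's own Example \ref{e41} exhibits sequences $\{f(\phi^k(\omega))\}$ from weakly mixing systems that lie in $W_r\cap H$ but are not $1$-Besicovich. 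So your argument, as written, proves the convergence only for weights in the $W_1$-closure of the Besicovich classes, which is strictly smaller than $W_{1^+}\cap H$. The whole point of the Lin--Olsen--Tempelman theorem is to handle Hartman weights that are \emph{not} Besicovich, and that is precisely the step you have elided.

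To close the gap one must show directly that $M_n^\alpha(S)(x)\to0$ in norm for flight vectors $x$ using only the existence of the limits $c_\alpha(\lambda)$, not norm-approximability of $\alpha$ by trigonometric polynomials. In the Hilbert contraction case this can be done via a unitary dilation and the spectral theorem: $\frac1n\sum_k\alpha_k\lambda^k$ converges pointwise on $\mathbb{T}$ to $c_\alpha(\overline{\lambda})$ and (after the reduction to $W_r$, $r>1$) is suitably dominated, so dominated convergence for the spectral integral gives the limit $\sum_\lambda c_\alpha(\lambda)E(\overline{\lambda})x$, which vanishes on the continuous part of the spectrum; this also yields the ``moreover'' formula without assuming termwise convergence of an eigenspace expansion. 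For a general WAP operator on a Banach space the flight-part argument is genuinely harder and is the content of \cite[Theorem 1.2]{lot}; it cannot be reduced to the unweighted mean ergodic theorem for the rotated operators $\lambda_jS$ in the way you describe.
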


This setup also allows the consideration of the Jacobs-de Leeuw-Glicksberg decomposition of $L_q$ for $1<q<\infty$ \cite[Chapter 16]{efhn}. This states that we may write $L_q(\mathcal{M},\tau)$ as
$$L_q(\mathcal{M},\tau)=\overline{\operatorname{span}(\mathcal{U}_q(T))}\oplus\mathcal{V}_q(T),$$ where
$$\mathcal{U}_q(T)=\Big\{x\in L_q(\mathcal{M},\tau):T(x)=\lambda x\text{ for some }\lambda\in\mathbb{T}\Big\},$$
$$\mathcal{V}_q(T)=\Big\{x\in L_q(\mathcal{M},\tau):\{T^{n_j}(x)\}_{j=0}^{\infty}\text{ converges weakly for some }\{n_j\}_{j=0}^{\infty}\subseteq\mathbb{N}_0\Big\},$$
with the closure of $\operatorname{span}(\mathcal{U}_q(T))$ being with respect to the norm on $L_q$. An element of $\mathcal{V}_q(T)$ is sometimes called a \textit{flight vector} of $T$. For notational convenience we define $\mathcal{U}_1(T)$ similarly for $L_1$.

A sequence $\alpha=\{\alpha_k\}_{k=0}^{\infty}\subset\mathbb{C}$ is called a \textit{linear sequence} if there exists a Banach space $X$ and a relatively weakly compact operator $S$ on $X$ with $z\in X$ and $\phi\in X^*$ such that $\alpha_k=\phi(S^k(z))$ for every $k\in\mathbb{N}_0$. It was shown in \cite{ei} that, for any linear sequence $\alpha$, there exists a Hartman sequence $\beta$ and $\gamma\in W_\infty$ with $\|\gamma\|_{W_1}=0$ such that $\alpha=\beta+\gamma$. All of these assumptions can be used to show that a linear sequence is a bounded Hartman sequences.

Observe that the setting we work in gives rise to numerous examples of linear sequences. To see this, let $T\in DS^+(\mathcal{M},\tau)$ and let $1<p,q<\infty$ be such that $\frac{1}{p}+\frac{1}{q}=1$. Then, for any $x\in L_p(\mathcal{M},\tau)$ and any $y\in L_q(\mathcal{M},\tau)$, the sequence $\{\tau(y^*T^n(x))\}_{n=0}^{\infty}$ is a linear sequence.

The following technical lemma justifies part of an assumption we will make later.

\begin{lm}\label{l41}
Assume $1<q<\infty$. If $x\in\mathcal{V}_q(T)$ is such that $T^n(x)\to y$ b.a.u. (a.u.) for some $y\in L_0$, then $y=0$.
\end{lm}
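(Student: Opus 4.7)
The plan is to compare two different limits of the Cesàro averages $M_n(T)(x)$. From the Jacobs--de Leeuw--Glicksberg decomposition, since $T$ is a contraction on the reflexive space $L_q$ (hence a WAP operator) and the mean ergodic projection annihilates flight vectors, one has $\|M_n(T)(x)\|_q \to 0$ for $x \in \mathcal{V}_q(T)$. The b.a.u.\ convergence of $\{T^n(x)\}$ to $y$ will propagate, via Cesàro summation, to b.a.u.\ convergence of $\{M_n(T)(x)\}$ to the same $y$; matching this against the vanishing $L_q$-norm limit will force $y = 0$.

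Concretely, fix $\epsilon > 0$ and choose $e \in \mathcal{P}(\mathcal{M})$ with $\tau(e^\perp) \leq \epsilon$ and $\|e(T^n(x) - y)e\|_\infty \to 0$. Because the scalar sequence $a_k := \|e(T^k(x) - y)e\|_\infty$ tends to zero, so does $\frac{1}{n}\sum_{k=0}^{n-1} a_k$, and hence $eM_n(T)(x)e \to eye$ in operator norm; in particular $eye \in \mathcal{M}$. Simultaneously $\|eM_n(T)(x)e\|_q \leq \|M_n(T)(x)\|_q \to 0$, so $eM_n(T)(x)e \to 0$ in $\|\cdot\|_q$. Both modes of convergence imply convergence in the (Hausdorff) measure topology, so $eye = 0$.

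To eliminate the truncating projection $e$, I repeat the argument with $\epsilon_k = 2^{-k}$ to obtain projections $e_k$ with $\tau(e_k^\perp) \leq 2^{-k}$ and $e_k y e_k = 0$. Setting $F_J = \bigwedge_{k \geq J} e_k$ yields an increasing sequence of projections with $\tau(F_J^\perp) \leq 2^{-J+1} \to 0$, so $F_J \to \mathbf{1}$ in measure. Since $F_J \leq e_J$, we have
\[
F_J y F_J = (F_J e_J)\, y\, (e_J F_J) = F_J (e_J y e_J) F_J = 0
\]
for every $J$, and joint continuity of multiplication in the measure topology on $L_0$ yields $F_J y F_J \to y$ in measure, forcing $y = 0$. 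The main technical points to handle carefully are (i) invoking $\|M_n(T)(x)\|_q \to 0$ for flight vectors (standard from JdLG for WAP operators on reflexive spaces) and (ii) reconciling operator-norm and $L_q$-norm limits via the measure topology, together with joint continuity of multiplication on $L_0$.
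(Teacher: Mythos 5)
Your proof is correct and follows essentially the same route as the paper's: use the mean ergodic theorem for flight vectors to get $\|M_n(T)(x)\|_q\to0$, transfer the b.a.u.\ limit $y$ of $\{T^n(x)\}$ to the Ces\`aro averages $\{M_n(T)(x)\}$ via averaging of the scalar sequence $\|e(T^k(x)-y)e\|_\infty$, and conclude $y=0$ from uniqueness of limits in the measure topology. The only cosmetic difference is that the paper deduces $M_n(T)(x)\to y$ b.a.u.\ (hence in measure) directly and compares limits there, while you first obtain $eye=0$ for each truncating projection and then strip off the projections; both are valid.
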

\begin{proof}
As a.u. convergence implies b.a.u. convergence, we will prove the b.a.u case.

Let $x\in\mathcal{V}_q(T)$ be such that $T^n(x)\to y$ b.a.u. for some $y\in L_0$. Since $\textbf{1}:=\{1\}_{k=0}^{\infty}$ is a bounded sequence and since $x$ is a flight vector for $T$, it follows by \cite[Theorem 4.1]{comlio} that $M_n(T)(x)\to0$ in the $L_q$-norm as $n\to\infty$, and so in measure in $L_0$ as well by \cite{ne}.

Fix $\epsilon>0$. Since $T^n(x)\to y$ b.a.u., there exists $e\in\mathcal{P}(\mathcal{M})$ such that $\tau(e^\perp)\leq\epsilon$ and $\|e(T^n(x)-y)e\|_\infty\to0$ as $n\to\infty$. With this, we find that
$$\|e(M_n(T)(x)-y)e\|_\infty=\left\|\frac{1}{n}\sum_{k=0}^{n-1}e(T^k(x)-y)e\right\|_\infty\leq\frac{1}{n}\sum_{k=0}^{n-1}\|e(T^k(x)-y)e\|_\infty.$$
Note that the right hand side of this inequality is the $n$-th term of the sequence of Ces\`{a}ro averages of $\{\|e(T^n(x)-y)e\|_\infty\}_{n=0}^{\infty}$, and so converges to $0$ as well. Consequently, $\{M_n(T)(x)\}_{n=1}^{\infty}$ converges to $y$ b.a.u. as $n\to\infty$ since $\epsilon>0$ is arbitrary. This convergence also occurs in measure, and since $L_0$ is a Hausdorff space with the measure topology, the uniqueness of limits in $L_0$ implies $y=0$.
\end{proof}

Since the computation regarding elements of $\mathcal{U}_p(T)$ will appear numerous times throughout the article, we will separate and write it as the following lemma.

\begin{lm}\label{l42}
For every $1\leq p<\infty$, $\operatorname{span}(\mathcal{U}_p(T))\subseteq WW_p(H)$.
\end{lm}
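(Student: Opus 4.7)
The plan is to reduce to the case of a single eigenvector and then extend by linearity, exploiting the fact that the scalar coefficient in the weighted average separates cleanly from the operator.

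Fix $x\in\mathcal{U}_p(T)$, so that $T(x)=\lambda x$ for some $\lambda\in\mathbb{T}$. Then $T^k(x)=\lambda^k x$ for every $k$, and therefore
\[
M_n^\alpha(T)(x)=\left(\frac{1}{n}\sum_{k=0}^{n-1}\alpha_k\lambda^k\right)x
\]
for every $\alpha\in H$. Since $\alpha$ is Hartman, the bracketed scalar converges to $c_\alpha(\overline{\lambda})\in\mathbb{C}$. Given $\ep>0$, use $\tau$-measurability of $x$ to pick $e\in\mathcal{P}(\mathcal{M})$ with $\tau(e^\perp)\leq\ep$ and $xe\in\mathcal{M}$; note this choice of $e$ depends only on $x$ and $\ep$, \emph{not} on $\alpha$. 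Then
\[
\|M_n^\alpha(T)(x)e-c_\alpha(\overline{\lambda})xe\|_\infty
=\left|\frac{1}{n}\sum_{k=0}^{n-1}\alpha_k\lambda^k-c_\alpha(\overline{\lambda})\right|\|xe\|_\infty\longrightarrow 0
\]
as $n\to\infty$, so $\{M_n^\alpha(T)(x)e\}_{n=1}^{\infty}$ converges in $\mathcal{M}$ for every $\alpha\in H$. Hence $\mathcal{U}_p(T)\subseteq WW_p(H)$.

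To pass to $\operatorname{span}(\mathcal{U}_p(T))$, take $x=\sum_{j=1}^{N}c_jx_j$ with $T(x_j)=\lambda_jx_j$, $\lambda_j\in\mathbb{T}$. Given $\ep>0$, choose $e_j\in\mathcal{P}(\mathcal{M})$ with $\tau(e_j^\perp)\leq\ep/N$ and $x_je_j\in\mathcal{M}$, and set $e=\bigwedge_{j=1}^{N}e_j$; then $\tau(e^\perp)\leq\ep$ and $x_je\in\mathcal{M}$ for every $j$. By linearity,
\[
M_n^\alpha(T)(x)e=\sum_{j=1}^{N}c_j\left(\frac{1}{n}\sum_{k=0}^{n-1}\alpha_k\lambda_j^k\right)x_je,
\]
and each summand converges in the $\|\cdot\|_\infty$-norm to $c_jc_\alpha(\overline{\lambda_j})x_je\in\mathcal{M}$ by the single-eigenvector argument. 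Since there are only finitely many summands, $\{M_n^\alpha(T)(x)e\}$ converges in $\mathcal{M}$, and again $e$ does not depend on $\alpha\in H$, giving $x\in WW_p(H)$.

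There is no real obstacle: the argument is essentially algebraic, with the Hartman condition used exactly where needed (to pass the complex coefficient to a limit), and the only subtle point is the bookkeeping that ensures the projection $e$ is chosen uniformly over all $\alpha\in H$ — which works automatically because $e$ is determined solely by the eigenvectors via $\tau$-measurability.
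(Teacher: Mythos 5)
Your proof is correct and follows essentially the same route as the paper: the single-eigenvector computation with a projection chosen only from the $\tau$-measurability of $x$ is identical, and your explicit linear-combination step merely spells out what the paper compresses into the remark that $WW_p(\mathcal{A})$ is a subspace of $L_p$.
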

\begin{proof}
Assume $x\in\mathcal{U}_p(T)$ and $\epsilon>0$, and let $\lambda\in\mathbb{T}$ be such that $T(x)=\lambda x$. Then, since $x$ is a $\tau$-measurable operator, there exists $e\in\mathcal{P}(\mathcal{M})$ such that $\tau(e^\perp)\leq\epsilon$ and $xe\in\mathcal{M}$. Now, for every $\alpha=\{\alpha_k\}_{k=0}^{\infty}\in H$ and $n\geq 1$, we find
$$
\left\|(M_n^\alpha(T)(x)-c_\alpha(\overline{\lambda})x)e\right\|_\infty
=\left|\frac{1}{n}\sum_{k=0}^{n-1}\alpha_k\lambda^k-c_\alpha(\overline{\lambda})\right|\|xe\|_\infty,
$$ and since $\alpha$ is a Hartman sequence the last term tends to $0$ as $n\to\infty$. Therefore $\{M_n^\alpha(T)(x)e\}_{n=1}^{\infty}$ converges in $\mathcal{M}$, and since $\alpha\in H$ and $\epsilon>0$ was arbitrary, it follows that $x\in WW_p(H)$. Since $x\in\mathcal{U}_p(T)$ was arbitrary, it follows that $\mathcal{U}_p(T)\subseteq WW_p(H)$. Since $WW_p(\mathcal{A})$ is a subspace of $L_p$ for every $\mathcal{A}\subseteq W_1$, the result follows.
\end{proof}

\vfill

\begin{teo}\label{t42}
Let $\mathcal{M}$ be a von Neumann algebra with normal semifinite faithful trace $\tau$. Let $T\in DS^+(\mathcal{M},\tau)$ be such that, for some $1<q<\infty$ ($2\leq q<\infty$), $T^n(x)\to 0$ b.a.u. (respectively, a.u.) as $n\to\infty$ for every $x\in\mathcal{V}_q(T)$. Then, for every $1\leq p<\infty$ (respectively, $2\leq p<\infty$), $bWW_p(W_\infty\cap H)=L_p(\mathcal{M},\tau)$ (respectively, $WW_p(W_\infty\cap H)=L_p(\mathcal{M},\tau)$).
\end{teo}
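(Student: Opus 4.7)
The plan is to combine the Jacobs-de Leeuw-Glicksberg decomposition on $L_q$ with the closedness of $bWW_q(W_\infty\cap H)$ obtained from the $W_\infty$-b.u.e.m.\ property, and then lift from $L_q$ to the general $L_p$ via density. I will treat only the b.a.u./$1<q<\infty$ case; the a.u.\ version ($2\le q<\infty$, conclusion for $2\le p<\infty$) goes through verbatim using the u.e.m.\ clauses of Proposition \ref{p31} and Theorem \ref{t31} together with the corresponding hypothesis.

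First I would verify $L_q=bWW_q(W_\infty\cap H)$. Since $W_\infty\cap H\subseteq W_\infty$, Proposition \ref{p31} makes $\{M_n^\alpha(T)\}_{n,\alpha}$ be $(W_\infty\cap H)$-b.u.e.m.\ at zero on $L_q$, so Theorem \ref{t31} makes $bWW_q(W_\infty\cap H)$ closed in $L_q$. Writing $L_q=\overline{\operatorname{span}(\mathcal{U}_q(T))}\oplus\mathcal{V}_q(T)$, it suffices to show both summands lie in $bWW_q(W_\infty\cap H)$. For $\operatorname{span}(\mathcal{U}_q(T))$ this is Lemma \ref{l42} together with $WW_q(H)\subseteq bWW_q(W_\infty\cap H)$, and the closedness just established then handles the norm closure. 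For $x\in\mathcal{V}_q(T)$, the standing hypothesis gives, for each $\epsilon>0$, a projection $e$ with $\tau(e^\perp)\le\epsilon$ and $\|eT^n(x)e\|_\infty\to 0$; then for every $\alpha\in W_\infty\cap H$,
\[
\|eM_n^\alpha(T)(x)e\|_\infty \le \frac{|\alpha|_{W_\infty}}{n}\sum_{k=0}^{n-1}\|eT^k(x)e\|_\infty \longrightarrow 0
\]
as a Ces\`aro average of a null sequence. The crucial point is that the same $e$ works for every $\alpha$ at once, so $x\in bWW_q(W_\infty\cap H)$.

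To pass from $L_q$ to arbitrary $1\le p<\infty$, I would use density. The defining condition of $bWW_p(W_\infty\cap H)$, namely convergence of $\{eM_n^\alpha(T)(x)e\}_n$ in $\mathcal{M}$, does not reference the ambient $L_p$-norm. Hence if $x\in L_1\cap\mathcal{M}$, then by interpolation $x\in L_q$, so $x\in bWW_q(W_\infty\cap H)$ by the previous step, and this forces $x\in bWW_p(W_\infty\cap H)$ as well. Since $L_1\cap\mathcal{M}$ is $\|\cdot\|_p$-dense in $L_p$ and $bWW_p(W_\infty\cap H)$ is $\|\cdot\|_p$-closed (again by Proposition \ref{p31} and Theorem \ref{t31}), the conclusion $L_p=bWW_p(W_\infty\cap H)$ follows. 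The essential work is the flight-vector step; the only subtle point there is the $\alpha$-independence of $e$, which holds automatically because the Ces\`aro estimate is uniform in $\alpha$ once $|\alpha|_{W_\infty}$ is fixed. Everything else is bookkeeping around the decomposition and the closedness provided by Theorem \ref{t31}.
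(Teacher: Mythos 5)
Your proposal is correct and follows essentially the same route as the paper: the Jacobs--de Leeuw--Glicksberg decomposition on $L_q$, Lemma \ref{l42} for the unimodular eigenvectors, the Ces\`aro estimate with a single $\alpha$-independent projection for flight vectors, closedness via Proposition \ref{p31} and Theorem \ref{t31}, and the $L_1\cap\mathcal{M}$ density argument to pass to general $p$. No gaps.
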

\begin{proof}
We will prove the b.a.u. case only. Recall that, by the Jacobs-de Leeuw-Glicksberg decomposition of $L_q$, the set $\operatorname{span}(\mathcal{U}_q(T))+\mathcal{V}_q(T)$ is dense in $L_q$. As such, we will show that each summand is in $bWW_q(W_\infty\cap H)$.

We know that $\operatorname{span}(\mathcal{U}_q(T))\subseteq bWW_q(H)$ by Lemma \ref{l42}, and the definition of the spaces directly implies that $bWW_q(H)\subseteq bWW_q(W_\infty\cap H)$. Therefore $\operatorname{span}(\mathcal{U}_q(T))\subseteq bWW_q(W_\infty\cap H)$.

Assume $x\in\mathcal{V}_q(T)$. Assume $\epsilon>0$, and let $e\in\mathcal{P}(\mathcal{M})$ be such that $\tau(e^\perp)\leq\epsilon$ and $\|eT^n(x)e\|_\infty\to0$. Let $\beta=\{\beta_k\}_{k=0}^{\infty}\in W_\infty\cap H$. Then
$$
\left\|eM_n^\beta(T)(x)e\right\|_\infty
\leq\frac{1}{n}\sum_{k=0}^{n-1}|\beta_k|\|eT^k(x)e\|_\infty\leq\|\beta\|_{W_\infty}\frac{1}{n}\sum_{k=0}^{n-1}\|eT^k(x)e\|_\infty.$$ Since $\|eT^k(x)e\|_\infty\to0$ as $n\to\infty$, the last term tends to $0$ as $n\to\infty$ as well since it is the $n$-th term of the sequence of Ces\`{a}ro averages associated to the sequence. Therefore $\|eM_n^\beta(T)(x)e\|_\infty\to0$ as $n\to\infty$, and since $\epsilon>0$ and $\beta\in W_\infty\cap H$ were arbitrary, it follows that $x\in bWW_q(W_\infty\cap H)$, and so $\mathcal{V}_q(T)\subseteq bWW_q(W_\infty\cap H)$.

We have shown that $\operatorname{span}(\mathcal{U}_q(T))$ and $\mathcal{V}_q(T)$ are both subsets of $bWW_q(W_\infty\cap H)$, and since the latter set is a closed subspace of $L_q$ by Theorem \ref{t31}, it follows that $L_q(\mathcal{M},\tau)=bWW_q(W_\infty\cap H)$.

Now, fix $1\leq p<\infty$. Since $L_1\cap\mathcal{M}\subseteq bWW_q(W_\infty\cap H)$, we find that for every $x\in L_1\cap\mathcal{M}$ and $\epsilon>0$ there exists $e\in\mathcal{P}(\mathcal{M})$ such that $\tau(e^\perp)\leq\epsilon$ and $\{eM_n^\alpha(T)(x)e\}_{n=1}^{\infty}$ converges in $\mathcal{M}$ for every $\alpha\in W_\infty\cap H$. However, since $L_1\cap\mathcal{M}$ is also contained in $L_p$, this shows that $L_1\cap\mathcal{M}\subseteq bWW_p(W_\infty\cap H)$. Since $L_1\cap\mathcal{M}$ is dense in $L_p$, and since $bWW_p(W_\infty\cap H)$ is closed in $L_p$ by Proposition \ref{p31} and Theorem \ref{t31}, it follows that $bWW_p(W_\infty\cap H)=L_p$.
\end{proof}

In Proposition \ref{p32}, we made the assumption that $\{T^n\}_{n=0}^{\infty}$ is b.u.e.m. or u.e.m. at zero on $(L_q,\|\cdot\|_q)$ for some $1\leq q<\infty$ to obtain a maximal ergodic inequality for unbounded weights. We now make this additional assumption to obtain a stronger version of Theorem \ref{t42} on the fixed $L_q$-space. This property is usually satisfied for such $T$, since it is typically used to prove the b.a.u. or a.u. convergence of $\{T^n(x)\}_{n=0}^{\infty}$ for $x\in L_q$.

\begin{teo}\label{t43}
Let $\mathcal{M}$ be a von Neumann algebra with normal semifinite faithful trace $\tau$, and let $1<q<\infty$. Let $T\in DS^+(\mathcal{M},\tau)$ be such that $\{T^n\}_{n=0}^{\infty}$ is b.u.e.m. (u.e.m.) at zero on $(L_q,\|\cdot\|_q)$ and that $T^n(x)\to0$ b.a.u. (respectively, a.u.) as $n\to\infty$ for every $x\in\mathcal{V}_q(T)$. Then $bWW_q(W_{1^+}\cap H)=L_q(\mathcal{M},\tau)$ (respectively, $WW_q(W_{1^+}\cap H)=L_q(\mathcal{M},\tau)$).
\end{teo}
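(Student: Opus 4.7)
The plan is to combine the Jacobs--de Leeuw--Glicksberg decomposition of $L_q$ with a closedness argument. Since $W_{1^+}\cap H\subseteq W_1$, Proposition \ref{p32} shows that $\{M_n^\al(T)\}_{n,\al}$ is $W_1$-b.u.e.m.\ (respectively, $W_1$-u.e.m.) at zero on $L_q$, so by Theorem \ref{t31} the set $bWW_q(W_{1^+}\cap H)$ (respectively, $WW_q(W_{1^+}\cap H)$) is closed in $L_q$. Because $T$ acts as a power-bounded operator on the reflexive Banach space $L_q$, the JLG decomposition $L_q=\overline{\operatorname{span}(\mathcal{U}_q(T))}\oplus\mathcal{V}_q(T)$ applies, and it then suffices to show that the norm-dense subspace $\operatorname{span}(\mathcal{U}_q(T))+\mathcal{V}_q(T)$ lies in $bWW_q(W_{1^+}\cap H)$.

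The eigenvector half is essentially free: Lemma \ref{l42} gives $\operatorname{span}(\mathcal{U}_q(T))\subseteq WW_q(H)$, which by the trivial inclusion $W_{1^+}\cap H\subseteq H$ and the implication ``a.u.\ $\Rightarrow$ b.a.u.'' is contained in $bWW_q(W_{1^+}\cap H)$.

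The crux is to show $\mathcal{V}_q(T)\subseteq bWW_q(W_{1^+}\cap H)$, and here I would invoke the hypothesis on $T^n(x)\to 0$ b.a.u.\ as follows. Fix $x\in\mathcal{V}_q(T)$ and $\ep>0$, and let $e\in\mathcal{P}(\mathcal{M})$ satisfy $\tau(e^\perp)\leq\ep$ and $a_k:=\|eT^k(x)e\|_\infty\to 0$, so that $C:=\sup_k a_k<\infty$. For any $\al\in W_{1^+}$, approximate $\al$ in the $W_1$-seminorm by some $\gm\in W_r$ with $r>1$ and split $\al=(\al-\gm)+\gm$. The contribution of $\al-\gm$ to $\|eM_n^\al(T)(x)e\|_\infty$ is at most $C\cdot\frac{1}{n}\sum_{k=0}^{n-1}|\al_k-\gm_k|$, whose $\limsup$ can be made arbitrarily small by the choice of $\gm$. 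The contribution of $\gm$ is bounded, via Hölder with conjugate exponents $r$ and $r'=r/(r-1)$, by $|\gm|_{W_r}\bigl(\frac{1}{n}\sum_{k=0}^{n-1}a_k^{r'}\bigr)^{1/r'}$, which vanishes because $a_k\to 0$ forces the Cesàro averages of $a_k^{r'}$ to tend to $0$. Hence $\|eM_n^\al(T)(x)e\|_\infty\to 0$ for every $\al\in W_{1^+}\cap H$, placing $x$ in $bWW_q(W_{1^+}\cap H)$. The u.e.m./a.u.\ version proceeds identically with $\|eT^k(x)e\|_\infty$ replaced by $\|T^k(x)e\|_\infty$.

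The main obstacle is precisely this flight-vector step for unbounded weights: for a general $\al\in W_1$, b.a.u.\ convergence of $T^n(x)$ to $0$ does not imply $M_n^\al(T)(x)\to 0$ b.a.u., since the weights can concentrate on the indices where $a_k$ is largest. Restricting to $W_{1^+}$, i.e.\ sequences $W_1$-approximable by elements of $W_r$ for some $r>1$, is what enables the Hölder splitting above and is the reason $W_{1^+}\cap H$ (rather than $W_1\cap H$) is the natural weight class in this setting.
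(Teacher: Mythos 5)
Your proposal is correct and follows essentially the same route as the paper: JLG decomposition of $L_q$, closedness of $bWW_q(W_{1^+}\cap H)$ via Proposition \ref{p32} and Theorem \ref{t31}, Lemma \ref{l42} for the eigenvector part, and the H\"older splitting against the Ces\`aro-null sequence $\|eT^k(x)e\|_\infty$ for flight vectors. The only cosmetic difference is that you inline the $W_1$-approximation of $\alpha\in W_{1^+}$ by some $\gamma\in W_r$, whereas the paper proves the estimate only for $\alpha\in\bigcup_{r>1}W_r$ and then invokes Proposition \ref{p33} to pass to the closure.
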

\begin{proof}
We will prove the theorem for $bWW_q(W_{1^+}\cap H)$ only. It suffices to prove that both $\operatorname{span}(\mathcal{U}_q(T))$ and $\mathcal{V}_q(T)$ are subsets of $bWW_q(W_{1^+}\cap H)$ due to Proposition \ref{p32} and Theorem \ref{t31} since their sum is dense in $L_q$ by construction.

By Lemma \ref{l42}, we know that $\operatorname{span}(\mathcal{U}_q(T))\subseteq bWW_q(H)$, which immediately implies that $\operatorname{span}(\mathcal{U}_q(T))\subseteq bWW_q(W_{1^+}\cap H)$ as well.

Assume that $x\in\mathcal{V}_q(T)$ and $\epsilon>0$. Let $e\in\mathcal{P}(\mathcal{M})$ be such that $\tau(e^\perp)\leq\epsilon$ and $\|eT^n(x)e\|_\infty\to0$ as $n\to\infty$.

Assume that $\alpha=\{\alpha_k\}_{k=0}^{\infty}\in\bigcup_{r>1}W_r$, and let $r,s>1$ be such that $\alpha\in W_r$ and $\frac{1}{r}+\frac{1}{s}=1$. Then H\"{o}lder's inequality implies
\begin{align*}
\|eM_n^\alpha(T)(x)e\|_\infty
&\leq\frac{1}{n}\sum_{k=0}^{n-1}|\alpha_k|\|eT^k(x)e\|_\infty \\
&\leq\frac{1}{n^{1/r}n^{1/s}}\left(\sum_{k=0}^{n-1}|\alpha_k|^r\right)^{1/r}\left(\sum_{k=0}^{n-1}\|eT^k(x)e\|_\infty^s\right)^{1/s} \\
&=\left(\frac{1}{n}\sum_{k=0}^{n-1}|\alpha_k|^r\right)^{1/r}\left(\frac{1}{n}\sum_{k=0}^{n-1}\|eT^k(x)e\|_\infty^s\right)^{1/s} \\
&\leq\left(\sup_{m\in\mathbb{N}}\frac{1}{m}\sum_{k=0}^{m-1}|\alpha_k|^r\right)^{1/r}\left(\frac{1}{n}\sum_{k=0}^{n-1}\|eT^k(x)e\|_\infty^s\right)^{1/s}.
\end{align*}
Since $\|eT^n(x)e\|_\infty\to0$ as $n\to\infty$, $\|eT^n(x)e\|_\infty^s\to0$ as well, and so the Ces\'{a}ro averages of this will converge to the same value too. Therefore the final expression tends to $0$ as $n\to\infty$, which implies that $\|eM_n^\alpha(T)(x)e\|_\infty\to0$ as $n\to\infty$. Since $\epsilon>0$ and $\alpha\in\bigcup_{r>1}W_r$ are arbitrary, it follows by Proposition \ref{p33} that $x\in bWW_q(\bigcup_{r>1}W_r)=bWW_q(W_{1^+})\subseteq bWW_q(W_{1^+}\cap H)$, from which we obtain $\mathcal{V}_q(T)\subseteq bWW_q(W_{1^+}\cap H)$.

We have proven that $\operatorname{span}(\mathcal{U}_q(T)),\mathcal{V}_q(T)\subseteq bWW_q(W_{1^+}\cap H)$. Therefore, since the sum of these two spaces is dense in $L_q$, and since $bWW_q(W_{1^+}\cap H)$ is closed in $L_q$ by Proposition \ref{p32} and Theorem \ref{t31}, it follows that $L_q\subseteq bWW_q(W_{1^+}\cap H)$, and since the latter space is a subset of the former by definition, we find that $L_q(\mathcal{M},\tau)=bWW_q(W_{1^+}\cap H)$.
\end{proof}

An even stronger conclusion of this result can be obtained for subsets of $W_r$ which are bounded in its seminorm when $x\in\mathcal{V}_q(T)$. In particular, since $$\|eM_n^\alpha(T)(x)e\|_\infty\leq|\alpha|_r\left(\frac{1}{n}\sum_{k=0}^{n-1}\|eT^k(x)e\|_\infty^s\right)^{1/s},$$ for $x\in\mathcal{V}_q(T)$ and $\alpha\in W_r$, one obtains the following analogue of Bourgain's uniform Wiener-Wintner ergodic theorem.

\begin{cor}\label{c42}
Assume $1\leq p<\infty$, $1<r\leq\infty$, and $0<b<\infty$. Suppose $x\in L_p(\mathcal{M},\tau)$ is such that $T^n(x)\to0$ b.a.u. (a.u.) as $n\to\infty$. Then, for every $\epsilon>0$, there exists $e\in\mathcal{P}(\mathcal{M})$ such that $\tau(e^\perp)\leq\epsilon$ and
$$\lim_{n\to\infty}\sup_{|\alpha|_{W_r}\leq b}\|eM_n^\alpha(T)(x)e\|_\infty=0 \  (\text{respectively, }\lim_{n\to\infty}\sup_{|\alpha|_{W_r}\leq b}\|M_n^\alpha(T)(x)e\|_\infty=0).$$
\end{cor}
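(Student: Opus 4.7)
The key observation, sketched in the paragraph preceding the statement, is that H\"{o}lder's inequality decouples the $\alpha$-dependence from the $k$-dependence in the averages, and the hypothesis $T^n(x)\to 0$ b.a.u. (a.u.) furnishes a single projection $e$ that works uniformly in $\alpha$. The plan is therefore to fix such $e$ once and for all using the hypothesis on $x$, and then bound $\|eM_n^\alpha(T)(x)e\|_\infty$ by a quantity in which $\alpha$ enters only through $|\alpha|_{W_r}$.

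Given $\ep>0$, the b.a.u. convergence of $T^n(x)$ to $0$ yields $e\in\mathcal{P}(\mathcal{M})$ with $\tau(e^\perp)\leq\ep$ and $\|eT^k(x)e\|_\infty\to 0$ as $k\to\infty$ (for the a.u. version, use $\|T^k(x)e\|_\infty\to 0$ instead). For any $\alpha\in W_r$ and $n\geq 1$, the triangle inequality gives
\[
\|eM_n^\alpha(T)(x)e\|_\infty\leq\frac{1}{n}\sum_{k=0}^{n-1}|\alpha_k|\,\|eT^k(x)e\|_\infty.
\]
When $1<r<\infty$, applying H\"{o}lder's inequality with conjugate exponent $s$ bounds the right-hand side by
\[
\left(\frac{1}{n}\sum_{k=0}^{n-1}|\alpha_k|^r\right)^{1/r}\left(\frac{1}{n}\sum_{k=0}^{n-1}\|eT^k(x)e\|_\infty^s\right)^{1/s}\leq|\alpha|_{W_r}\left(\frac{1}{n}\sum_{k=0}^{n-1}\|eT^k(x)e\|_\infty^s\right)^{1/s}.
\]
When $r=\infty$, the trivial estimate $|\alpha_k|\leq|\alpha|_{W_\infty}$ gives the analogous inequality with $s=1$.

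Now I take the supremum over all $\alpha$ with $|\alpha|_{W_r}\leq b$: the first factor is at most $b$ and the second no longer involves $\alpha$, so
\[
\sup_{|\alpha|_{W_r}\leq b}\|eM_n^\alpha(T)(x)e\|_\infty\leq b\left(\frac{1}{n}\sum_{k=0}^{n-1}\|eT^k(x)e\|_\infty^s\right)^{1/s}.
\]
Since $\|eT^k(x)e\|_\infty\to 0$ entails $\|eT^k(x)e\|_\infty^s\to 0$ for any fixed $s\in[1,\infty)$, the Ces\`{a}ro averages of that sequence vanish as well, so the right-hand side tends to $0$ as $n\to\infty$. This yields the b.a.u. assertion, and the a.u. case is identical after replacing $eT^k(x)e$ with $T^k(x)e$ throughout. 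I do not anticipate any genuine obstacle: the whole argument is a direct combination of the H\"{o}lder estimate already displayed in the preceding discussion with the elementary fact that Ces\`{a}ro means preserve convergence to $0$; the only minor care needed is to handle $r=\infty$ separately by taking $s=1$.
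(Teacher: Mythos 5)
Your proposal is correct and follows exactly the route the paper intends: the corollary is deduced from the displayed H\"{o}lder estimate preceding it, with a single projection $e$ furnished by the b.a.u. (a.u.) convergence of $T^n(x)$ to $0$, the factor $\left(\frac{1}{n}\sum_{k=0}^{n-1}|\alpha_k|^r\right)^{1/r}\leq|\alpha|_{W_r}\leq b$ decoupling the weight, and the Ces\`{a}ro means of the null sequence $\|eT^k(x)e\|_\infty^s$ supplying the uniform decay. Your separate treatment of $r=\infty$ with $s=1$ is the only point requiring care, and you handle it correctly.
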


\begin{ex}\label{e41}
We recall again a common construction of sequences in $W_{1^+}\cap H$. Let $(X,\mathcal{F},\mu)$ be a probability space, and let $\phi:X\to X$ be an ergodic measure-preserving transformation. Let $1< r\leq\infty$ and $f\in L_r(X,\mathcal{F},\mu)$. Then, by \cite[Proposition 1.5]{lot}, the sequence $\{f(\phi^k(\omega))\}_{k=0}^{\infty}$ is in $W_r\cap H$ for $\mu$-a.e. $\omega\in X$. Furthermore, if $\phi$ is weakly mixing and $f$ is not a $\mu$-a.e. constant function, then for $\mu$-a.e. $\omega\in X$ the sequence is Hartman but not $1$-Besicovitch.
\end{ex}

By focusing our attention to the cases where $f\in L_\infty(X,\mathcal{F},\mu)$, we obtain the class of weights involved in Bourgain's Return Times Theorem. With this in mind, we state a noncommutative version of that result in the setting of Theorem \ref{t42}.

\begin{cor}\label{c41}
Assume $1\leq p<\infty$ ($2\leq p<\infty$), and let $T\in DS^+(\mathcal{M},\tau)$ be such that, for some $1<q<\infty$ (respectively, $2\leq q<\infty$), $T^n(x)\to0$ b.a.u. (respectively, a.u.) as $n\to\infty$ for every $x\in\mathcal{V}_q(T)$. Then, for every $x\in L_p(\mathcal{M},\tau)$ and $\epsilon>0$, there exists $e\in\mathcal{P}(\mathcal{M})$ such that $\tau(e^\perp)\leq\epsilon$ and, given a probability measure-preserving system $(X,\mathcal{F},\mu,\phi)$ and $f\in L_\infty(X,\mathcal{F},\mu)$, one has that
$$e\left[\frac{1}{n}\sum_{k=0}^{n-1}f(\phi^k(s))T^k(x)\right]e \ \left(\text{respectively, } \left[\frac{1}{n}\sum_{k=0}^{n-1}f(\phi^k(s))T^k(x)\right]e\right)$$ converges in $\mathcal{M}$ for $\mu$-a.e. $s\in X$. In particular, for every $E\in\mathcal{F}$ with $\mu(E)>0$ and $s\in E$, letting $n_0=\inf\{j\in\mathbb{N}_0:\phi^j(s)\in E\}$ and $n_k=\inf\{j\in\mathbb{N}_0:j>n_{k-1} \text{ and }\phi^j(x)\in E\}$, the subsequential averages
$$e\left[\frac{1}{n}\sum_{k=0}^{n-1}T^{n_k}(x)\right]e \ \left(respectively, \left[\frac{1}{n}\sum_{k=0}^{n-1}T^{n_k}(x)\right]e\right)$$ converge in $\mathcal{M}$ as $n\to\infty$ for every $x\in L_p(\mathcal{M},\tau)$.
\end{cor}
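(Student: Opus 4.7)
The plan is to extract from Theorem~\ref{t42} a single projection $e$ that works uniformly over all bounded Hartman weights, and then to feed in orbit sequences arising from the classical Wiener-Wintner theorem. Since the hypotheses on $T$ are exactly those of Theorem~\ref{t42}, we have $bWW_p(W_\infty\cap H)=L_p(\mathcal{M},\tau)$ (respectively, $WW_p(W_\infty\cap H)=L_p(\mathcal{M},\tau)$ when $2\leq p<\infty$). Unwinding the definition, for every $x\in L_p$ and $\epsilon>0$ there exists $e\in\mathcal{P}(\mathcal{M})$ with $\tau(e^\perp)\leq\epsilon$ such that $\{eM_n^\alpha(T)(x)e\}_n$ (respectively, $\{M_n^\alpha(T)(x)e\}_n$) converges in $\mathcal{M}$ \emph{simultaneously} for every $\alpha\in W_\infty\cap H$. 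It is this uniformity in $\alpha$ that allows one projection $e$ to handle all choices of system and weight in what follows.

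Next, given a p.m.p. system $(X,\mathcal{F},\mu,\phi)$ and $f\in L_\infty(X,\mathcal{F},\mu)$, the classical Wiener-Wintner theorem (as recalled in Example~\ref{e41}) produces a $\mu$-null set $N_{\phi,f}$ such that for every $s\in X\setminus N_{\phi,f}$ the orbit sequence $\alpha(s):=\{f(\phi^k(s))\}_{k=0}^\infty$ is a Hartman sequence; since $\|\alpha(s)\|_{W_\infty}\leq\|f\|_\infty$, we have $\alpha(s)\in W_\infty\cap H$. Substituting $\alpha(s)$ for $\alpha$ in the convergence from the previous paragraph yields, for $\mu$-a.e. $s\in X$, convergence in $\mathcal{M}$ of
\[
e\left[\frac{1}{n}\sum_{k=0}^{n-1} f(\phi^k(s)) T^k(x)\right]e
\]
as $n\to\infty$, and analogously without the right $e$ in the a.u. case. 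This is the first assertion of the corollary.

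For the return-time statement, apply the preceding to $f=\mathbf{1}_E\in L_\infty(X,\mathcal{F},\mu)$. By Birkhoff's pointwise ergodic theorem, for $\mu$-a.e. $s\in X$,
\[
\frac{1}{N}\sum_{j=0}^{N-1}\mathbf{1}_E(\phi^j(s))\longrightarrow \widehat{\mathbf{1}_E}(s),
\]
where $\widehat{\mathbf{1}_E}$ is the conditional expectation of $\mathbf{1}_E$ onto the $\phi$-invariant $\sigma$-algebra, and by Poincar\'{e} recurrence this limit is strictly positive for $\mu$-a.e. $s\in E$. For such $s$ the return times $\{n_k\}_{k\geq 0}$ are well-defined; setting $N_n=n_{n-1}+1$, one has $N_n\to\infty$,
\[
\sum_{k=0}^{n-1}T^{n_k}(x)=\sum_{j=0}^{N_n-1}\mathbf{1}_E(\phi^j(s))T^j(x), \qquad \frac{n}{N_n}=\frac{1}{N_n}\sum_{j=0}^{N_n-1}\mathbf{1}_E(\phi^j(s))\longrightarrow \widehat{\mathbf{1}_E}(s)>0.
\]
Hence
\[
\frac{1}{n}\sum_{k=0}^{n-1}T^{n_k}(x)=\frac{N_n}{n}\cdot\frac{1}{N_n}\sum_{j=0}^{N_n-1}\mathbf{1}_E(\phi^j(s))T^j(x),
\]
and after compressing by $e$ the scalar factor tends to $1/\widehat{\mathbf{1}_E}(s)\in(0,\infty)$ while the operator factor converges in $\mathcal{M}$ by the previous paragraph, giving convergence of the product in $\mathcal{M}$.

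The only real subtlety is the uniformity in $\alpha$ guaranteed by $bWW_p(W_\infty\cap H)=L_p$: it is this that lets a single $e$ (depending only on $x$ and $\epsilon$) accommodate every orbit sequence $\alpha(s)$ arising from every choice of system and weight $f$. Everything else reduces to the classical Wiener-Wintner and Birkhoff theorems together with the elementary identity relating weighted Ces\`{a}ro averages against $\mathbf{1}_E$ to subsequential averages along return times.
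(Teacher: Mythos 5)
Your proposal is correct and follows essentially the route the paper intends: Theorem \ref{t42} supplies a single projection $e$ valid simultaneously for all $\alpha\in W_\infty\cap H$, the classical Wiener--Wintner theorem places almost every orbit sequence $\{f(\phi^k(s))\}_k$ in that class, and the return-time assertion follows by taking $f=\mathbf{1}_E$ together with the identity relating the $\mathbf{1}_E$-weighted averages to the subsequential averages and Birkhoff's theorem. Your explicit Birkhoff/Poincar\'{e} normalization step (with the a.e.\ positivity of $\widehat{\mathbf{1}_E}$ on $E$, so the conclusion holds for $\mu$-a.e.\ $s\in E$) correctly fills in the detail the paper leaves implicit.
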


In searching the literature, the author was unable to find any reference to the results above regarding the pointwise/almost uniform convergence of averages weighted by every sequence in $W_{1^+}\cap H$ in the commutative setting. As such, we will now translate our results to that setting. If $(X,\mathcal{F},\mu)$ is a measure space, recall that Egorov's theorem states that $\mu$-a.e. convergence of a sequence of functions is equivalent to a.u. convergence if $\mu(X)<\infty$. However, a.u. convergence is stronger than $\mu$-a.e. convergence when $\mu(X)=\infty$.

\begin{cor}\label{c43}
Let $(X,\mathcal{F},\mu)$ be a $\sigma$-finite measure space, and let $\mathcal{M}=L_\infty(X,\mathcal{F},\mu)$ be the commutative von Neumann algebra acting on the Hilbert space $L_2(X,\mathcal{F},\mu)$ by multiplication operators. Let $\tau(f)=\int_{X}f\,d\mu$ for $f\in L_\infty(X,\mathcal{F},\mu)^+$ be the usual n.s.f. trace on $\mathcal{M}$.

Fix $1<q<\infty$, and assume that $T\in DS^+(\mathcal{M},\tau)$ be such that $T^n(f)\to0$ a.u. for every $f\in\mathcal{V}_q(T)$. Then, for every $1\leq p<\infty$, $f\in L_p(X,\mathcal{F},\mu)$, and $\epsilon>0$, there exists $Y\in\mathcal{F}$ with $\mu(Y^c)<\epsilon$ such that $\{\frac{1}{n}\sum_{k=0}^{n-1}\alpha_kT^k(f)\chi_{Y}\}_{n=1}^{\infty}$ converges uniformly for every $\alpha\in W_\infty\cap H$.

Furthermore, if the iterates $\{T^n\}_{n=0}^{\infty}$ are u.e.m. at zero on $(L_q(X,\mathcal{F},\mu),\|\cdot\|_q)$, then $\{\frac{1}{n}\sum_{k=0}^{n-1}\alpha_k T^k(f)\chi_{Y}\}_{n=1}^{\infty}$ converges uniformly for every $f\in L_q(X,\mathcal{F},\mu)$ and for every $\alpha\in W_{1^+}\cap H$.
\end{cor}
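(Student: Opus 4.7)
The plan is to translate Theorems~\ref{t42} and~\ref{t43} into the commutative language, exploiting two facts already recorded in the preliminaries: when $\mathcal{M}$ is commutative, b.a.u.\ and a.u.\ convergence coincide (and similarly b.u.e.m.\ and u.e.m.\ at zero agree), and the projections in $\mathcal{M} = L_\infty(X,\mathcal{F},\mu)$ are precisely the characteristic functions $\chi_Y$ with $\tau(\chi_{Y^c}) = \mu(Y^c)$. Together these identifications will say that $bWW_p(\mathcal{W}) = WW_p(\mathcal{W})$ here and that norm convergence in $\mathcal{M}$ is nothing more than uniform convergence off a null set.

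For the first assertion, since $\mathcal{M}$ is commutative, the hypothesis that $T^n(f) \to 0$ a.u.\ for every $f \in \mathcal{V}_q(T)$ is the same as the corresponding b.a.u.\ hypothesis. Thus the b.a.u.\ version of Theorem~\ref{t42} applies (for which only $1 < q < \infty$ is required) and yields $bWW_p(W_\infty \cap H) = L_p(X,\mathcal{F},\mu)$ for every $1 \leq p < \infty$; commutativity then upgrades this to $WW_p(W_\infty \cap H) = L_p$. Unpacking the definition of $WW_p$ at a fixed $f \in L_p$ and $\epsilon > 0$ produces a projection, which in this setting must be of the form $\chi_Y$ with $\mu(Y^c) \leq \epsilon$, such that $\{M_n^\alpha(T)(f)\chi_Y\}_{n \geq 1}$ converges in $\|\cdot\|_\infty$ for every $\alpha \in W_\infty \cap H$. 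Since $\|\cdot\|_\infty$-convergence in $L_\infty(X,\mathcal{F},\mu)$ is precisely uniform convergence off a null set of $Y$, this is the stated conclusion. Crucially, the quantifier structure of $WW_p(\mathcal{W})$ produces $\chi_Y$ independently of $\alpha \in \mathcal{W}$, so the same $Y$ handles every $\alpha \in W_\infty \cap H$ simultaneously.

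For the second assertion, the additional assumption that $\{T^n\}_{n=0}^{\infty}$ is u.e.m.\ at zero on $(L_q,\|\cdot\|_q)$ (equivalently, b.u.e.m., by commutativity) places us in the scope of Theorem~\ref{t43}, yielding $WW_q(W_{1^+} \cap H) = L_q$. The identical unpacking produces a measurable $Y \subseteq X$ with $\mu(Y^c) \leq \epsilon$ on which $\{\tfrac{1}{n}\sum_{k=0}^{n-1}\alpha_k T^k(f)\chi_Y\}_{n \geq 1}$ converges uniformly for every $f \in L_q$ and every $\alpha \in W_{1^+} \cap H$.

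I do not anticipate a substantive obstacle: the whole corollary is essentially a dictionary translation between operator-algebraic and measure-theoretic language, once one has Theorems~\ref{t42} and~\ref{t43} in hand. The only bookkeeping point worth checking explicitly is the independence of $\chi_Y$ from the particular weight $\alpha$, which is built into the definition of $WW_p(\mathcal{W})$.
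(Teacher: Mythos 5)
Your proposal is correct and is essentially the argument the paper intends (the corollary is stated as a direct translation of Theorems \ref{t42} and \ref{t43}, with no separate proof given): commutativity collapses b.a.u.\ into a.u.\ and $bWW_p$ into $WW_p$, which is precisely what lets you invoke the b.a.u.\ version of Theorem \ref{t42} (valid for $1<q<\infty$ and all $1\leq p<\infty$) rather than the a.u.\ version (which would force $2\leq q$), and then read off projections as characteristic functions and $\|\cdot\|_\infty$-convergence as (essentially) uniform convergence. You also correctly flag the one point worth checking, namely that the projection in the definition of $WW_p(\mathcal{W})$ is chosen uniformly over $\alpha\in\mathcal{W}$.
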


In the setting of Theorem \ref{t43}, for any $x\in L_q$ and $\alpha\in W_{1^+}\cap H$, we now know that $\{M_n^\alpha(T)(x)\}_{n=1}^{\infty}$ converges b.a.u. (a.u.) as $n\to\infty$. Since $L_0$ is complete with respect to b.a.u. (a.u.) convergence by \cite[Theorem 2.3]{cls}, we know that the b.a.u. (a.u.) limit $\widehat{x}_\alpha$ of this sequence is also in $L_0$. The next proposition says even more, namely, that $\widehat{x}_\alpha$ is actually in $L_q$ as well.

\begin{pro}\label{p41}
Assume $1\leq p<\infty$, $T\in DS^+(\mathcal{M},\tau)$, and $\alpha=\{\alpha_k\}_{k=0}^{\infty}\in W_1$. Suppose, for some $x\in L_p(\mathcal{M},\tau)$, that $\{M_n^\alpha(T)(x)\}_{n=1}^{\infty}$ converges b.a.u. or a.u. to some $\widehat{x}\in L_0(\mathcal{M},\tau)$ as $n\to\infty$. Then $\widehat{x}\in L_p(\mathcal{M},\tau)$.
\end{pro}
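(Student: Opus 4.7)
The plan is to bound the $L_p$-norms of the averages uniformly in $n$, then invoke a Fatou-type lower semicontinuity result to conclude the limit lies in $L_p$.

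First, recall that by \cite[Lemma 1.1]{jx} (cited in the Preliminaries), every $T\in DS^+$ extends to a linear contraction on $L_p(\mathcal{M},\tau)$ for each $1\leq p\leq\infty$. Hence $\|T^k(x)\|_p\leq\|x\|_p$ for all $k\in\mathbb{N}_0$, and the triangle inequality gives
\[
\|M_n^\alpha(T)(x)\|_p\leq \frac{1}{n}\sum_{k=0}^{n-1}|\alpha_k|\,\|T^k(x)\|_p \leq \left(\frac{1}{n}\sum_{k=0}^{n-1}|\alpha_k|\right)\|x\|_p \leq |\alpha|_{W_1}\|x\|_p
\]
for every $n\geq 1$. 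Since $\alpha\in W_1$, the quantity $|\alpha|_{W_1}$ is finite (as noted in Section \ref{s3}), so the sequence $\{M_n^\alpha(T)(x)\}_{n=1}^{\infty}$ is uniformly bounded in $L_p$.

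Next, I would observe that b.a.u. (or a.u.) convergence of $M_n^\alpha(T)(x)$ to $\widehat{x}$ implies convergence in the measure topology on $L_0$: for any $\epsilon,\delta>0$, choose $e\in\mathcal{P}(\mathcal{M})$ with $\tau(e^\perp)\leq\epsilon$ and $\|e(M_n^\alpha(T)(x)-\widehat{x})e\|_\infty\to 0$, which places $M_n^\alpha(T)(x)-\widehat{x}$ in $V(\epsilon,\delta)$ eventually.

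The final step is a standard Fatou-type argument for generalized singular numbers. It is classical (Fack--Kosaki) that if $y_n\to y$ in measure in $L_0(\mathcal{M},\tau)$, then $\mu_t(y)\leq\liminf_{n\to\infty}\mu_t(y_n)$ for a.e.\ $t>0$. Applying this with $y_n=M_n^\alpha(T)(x)$ and $y=\widehat{x}$, raising to the power $p$, and applying the (commutative) Fatou lemma to the integral $\|\cdot\|_p^p=\int_0^\infty\mu_t(\cdot)^p\,dt$, I obtain
\[
\|\widehat{x}\|_p^p \leq \liminf_{n\to\infty}\|M_n^\alpha(T)(x)\|_p^p \leq \big(|\alpha|_{W_1}\|x\|_p\big)^p<\infty,
\]
so $\widehat{x}\in L_p(\mathcal{M},\tau)$ as desired. (In the case $p=\infty$ the same argument applies since $\mu_0$ returns the operator norm, but the statement is restricted to $1\leq p<\infty$.)

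No serious obstacle is anticipated; the heart of the argument is the contractivity of $T$ on $L_p$, which turns a $W_1$-weighted Cesàro average into an $L_p$-bounded sequence, and the lower semicontinuity of $\mu_t$ under convergence in measure, which is a standard property of noncommutative singular numbers. The bound $\|\widehat{x}\|_p\leq |\alpha|_{W_1}\|x\|_p$ comes out as a bonus.
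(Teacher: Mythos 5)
Your proposal is correct and follows essentially the same route as the paper: bound $\|M_n^\alpha(T)(x)\|_p$ by $|\alpha|_{W_1}\|x\|_p$ using contractivity of $T$ on $L_p$, note that b.a.u.\ (a.u.) convergence implies convergence in measure, and conclude that the limit stays in the closed ball of radius $|\alpha|_{W_1}\|x\|_p$. The only difference is cosmetic: where you prove the last step directly via the Fack--Kosaki lower semicontinuity of $\mu_t$ and Fatou's lemma, the paper simply cites the fact that closed balls of $L_p$ are closed in the measure topology.
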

\begin{proof}
Since a.u. convergence implies b.a.u. convergence to the same element in $L_0$, it suffices to prove the b.a.u. case.

Since $\alpha\in W_1$, we know that
$|\alpha|_{W_1}<\infty$. For every $n\in\mathbb{N}$, since $T\in DS^+$ implies that $T:L_p\to L_p$ is a contraction, we have that
$$\|M_n^\alpha(T)(x)\|_p\leq\frac{1}{n}\sum_{k=0}^{n-1}|\alpha_k|\|T^k(x)\|_p\leq\|x\|_p\frac{1}{n}\sum_{k=0}^{n-1}|\alpha_k|\leq |\alpha|_{W_1}\|x\|_p.$$ Therefore $M_n^\alpha(T)(x)$ is in the closed ball of radius $|\alpha|_{W_1}\|x\|_p$ in $L_p$ for each $n\geq1$. Since the closed unit ball of $L_p$ is closed with respect to the measure topology (see \cite[Theorem 1.2]{cls}), and since $M_n^\alpha(T)(x)\to\widehat{x}$ b.a.u. implies the convergence takes place in measure, it follows that $\widehat{x}\in L_p(\mathcal{M},\tau)$.
\end{proof}

\begin{rem}\label{r43}
One can extend Theorem \ref{t42} and Corollary \ref{c31} to noncommutative symmetric and fully symmetric spaces. (See \cite{cl2} for a more thorough discussion of these spaces.)

Let $E\subseteq L_0$ be a (non-zero) Banach space with norm $\|\cdot\|_E$. Then $E$ is called a \textit{symmetric (fully symmetric)} space on $(\mathcal{M},\tau)$ if
$$x\in E, \ y\in L_0, \ \mu_t(y)\leq \mu_t(x) \text{ for all }t>0$$
$$(\text{respectively, }x\in E, y\in L_0, \int_{0}^{s}\mu_t(y)dt\leq\int_{0}^{s}\mu_t(x)dt \text{ for all }s>0)$$ implies that $y\in E$ and $\|y\|_E\leq\|x\|_E$. The first examples of symmetric spaces on $\mathcal{M}$ that one typically encounters are the noncommutative $L_p$-spaces associated with $\mathcal{M}$ for every $1\leq p<\infty$, although there are many others of interest as well.

For each $x\in L_1+\mathcal{M}$ write
$$\|x\|_{L_1+\mathcal{M}}:=\inf\{\|y\|_1+\|z\|_\infty:x=y+z\text{ for some }y\in L_1 \text{ and }z\in\mathcal{M}\}.$$
Then $\|\cdot\|_{L_1+\mathcal{M}}$ is a norm on $L_1+\mathcal{M}$ under which it becomes a Banach space.

Define the space $\mathcal{R}_{\tau}\subseteq L_1+\mathcal{M}$ as
$$\mathcal{R}_{\tau}:=\{x\in L_1+\mathcal{M}:\mu_t(x)\to0\text{ as }t\to\infty\}.$$ It is known that $\mathcal{R}_{\tau}$ is the closure of $L_1\cap\mathcal{M}$ with respect to the norm $\|\cdot\|_{L_1+\mathcal{M}}$, so that $(\mathcal{R}_{\tau},\|\cdot\|_{L_1+\mathcal{M}})$ is a Banach space. Moreover, it is actually a fully symmetric space. Some other important properties of $\mathcal{R}_{\tau}$ are the following: if $\tau(1)<\infty$, then $\mathcal{R}_{\tau}=L_1$, and if $\tau(1)=\infty$, then \cite[Proposition 2.2]{cl2} states that a fully symmetric space $E$ is contained in $\mathcal{R}_{\tau}$ if and only if $1\not\in E$.

Following the proofs of Theorem 4.3 and 4.6 of \cite{cl2} almost identically, one can show that, if $\mathcal{W}\subseteq W_\infty$, $T\in DS^+(\mathcal{M},\tau)$, and $L_1(\mathcal{M},\tau)=bWW_1(\mathcal{W})$, then for every $x\in\mathcal{R}_{\tau}$ and $\epsilon>0$ there exists $e\in\mathcal{P}(\mathcal{M})$ such that $\tau(e^{\perp})\leq\epsilon$ and $\{eM_n^\alpha(T)(x)e\}_{n=1}^{\infty}$ converges in $\mathcal{M}$ for every $\alpha\in\mathcal{W}$. In particular, this applies for $\mathcal{W}=\mathcal{W}_c$ by Corollary \ref{c31} and $\mathcal{W}=W_\infty\cap H$ for the operators $T\in DS^+$ which were considered in Theorem \ref{t42}.
\end{rem}

\section{Convergence along subsequences of density zero}\label{s5}

In this section, under the same assumptions we made in the previous section we will consider some subsequential ergodic theorems along sequences with density zero. In particular, we will follow \cite{ei} and consider the b.a.u. and a.u. convergence of the averages of powers of the operators $T$ considered in Theorem \ref{t43} along the primes, and as a consequence of this show that the von Mangoldt function works well as a weight for the noncommutative individual ergodic theorem for such $T$. We also prove a Wiener-Wintner type ergodic theorem for moving average sequences.

Let $\Lambda:\mathbb{N}_0\to\mathbb{C}$ be the von Mangoldt function, which is given by
$$\Lambda(n)=\left\{\begin{array}{cl}\ln(p),&\text{ when }n=p^k\text{ for some }k\in\mathbb{N}\text{ and }p\text{ is prime} \\ 0,&\text{ otherwise}\end{array}\right.$$
The norm convergence of the averages of $\{S^n(\xi)\}_{n=0}^{\infty}$ when weighted by $\{\Lambda(n)\}_{n=0}^{\infty}$ under numerous conditions on a bounded operator $S$ on a Banach space $X$ with $\xi\in X$ was studied in Section 4 of \cite{el}. We will often write $\Lambda=\{\Lambda(n)\}_{n=0}^{\infty}$.

The prime number theorem is equivalent to the statement $\frac{1}{n}\sum_{k=0}^{n-1}\Lambda(k)\to 1$ as $n\to\infty$, and since the former is known to be true, this implies that $\Lambda\in W_1$ since $\Lambda(k)\geq0$ for every $k$.
Also, it is known that $\Lambda$ is a Hartman sequence by \cite[Theorem 4.3]{el}. However, $\|\Lambda-\alpha\|_{W_1}\geq\frac{1}{2}$ for every $\alpha\in W_r$ and $1<r\leq\infty$ by \cite[Proposition 4.6]{el}, so that $\Lambda\not\in W_{1^+}\cap H$. Therefore, we study the averages of $T$ weighted by $\Lambda$ here.

The following technical result, which is essentially a maximal inequality for subsequences of $\mathbb{N}_0$ (note the similarities with Proposition \ref{p32}), will be needed later.

\begin{pro}\label{p51}
Let $1\leq p<\infty$ and $T\in DS^+$ be such that $\{T^n\}_{n=0}^{\infty}$ is b.u.e.m. (u.e.m.) at zero on $(L_p,\|\cdot\|_p)$. Then, for every $\epsilon,\delta>0$, there exists $\gamma>0$ such that, for any $x\in L_p$ with $\|x\|_p<\gamma$, there exists $e\in\mathcal{P}(\mathcal{M})$ such that $\tau(e^\perp)\leq\epsilon$ and, for any strictly increasing sequence $\{k_j\}_{j=0}^{\infty}\subseteq\mathbb{N}_0$,
$$\sup_{n\in\mathbb{N}}\left\|e\left(\frac{1}{n}\sum_{j=0}^{n-1}T^{k_j}(x)\right)e\right\|_\infty\leq\delta \ (\text{respectively, }\sup_{n\in\mathbb{N}}\left\|\left(\frac{1}{n}\sum_{j=0}^{n-1}T^{k_j}(x)\right)e\right\|_\infty\leq\delta).$$
\end{pro}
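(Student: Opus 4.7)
The plan is to mimic the argument used in the proof of Proposition \ref{p32} almost verbatim, since the only input there was uniform control over $\{eT^k(x)e\}_{k\geq 0}$ rather than anything weight-specific. First I would invoke the hypothesis that $\{T^n\}_{n=0}^{\infty}$ is b.u.e.m.\ (u.e.m.) at zero on $(L_p,\|\cdot\|_p)$ directly with the given $\epsilon$ and $\delta$: this furnishes a threshold $\gamma>0$ such that any $x\in L_p$ with $\|x\|_p<\gamma$ admits a projection $e\in\mathcal{P}(\mathcal{M})$ with $\tau(e^\perp)\leq\epsilon$ and
\[
\sup_{k\in\mathbb{N}_0}\|eT^k(x)e\|_\infty\leq\delta \ \Big(\text{respectively, }\sup_{k\in\mathbb{N}_0}\|T^k(x)e\|_\infty\leq\delta\Big).
\]

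Once that single projection $e$ is fixed, I would let $\{k_j\}_{j=0}^{\infty}\subseteq\mathbb{N}_0$ be an arbitrary strictly increasing sequence and estimate the two-sided compression of the subsequential Cesàro average by the triangle inequality in $\mathcal{M}$:
\[
\left\|e\left(\frac{1}{n}\sum_{j=0}^{n-1}T^{k_j}(x)\right)e\right\|_\infty
\leq \frac{1}{n}\sum_{j=0}^{n-1}\|eT^{k_j}(x)e\|_\infty
\leq \sup_{k\in\mathbb{N}_0}\|eT^k(x)e\|_\infty
\leq \delta,
\]
valid uniformly in $n\in\mathbb{N}$ and uniformly in the choice of subsequence $\{k_j\}$. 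The u.e.m.\ case is handled the same way, using $\|T^{k_j}(x)e\|_\infty$ in place of $\|eT^{k_j}(x)e\|_\infty$.

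There is no real obstacle here: the proposition is essentially a restatement of the b.u.e.m.\ (u.e.m.) hypothesis, since any convex combination (in particular a uniform Cesàro average) of iterates indexed by an arbitrary subset of $\mathbb{N}_0$ is controlled by the supremum of the compressed iterates. The only thing worth emphasizing in the writeup is that the projection $e$ depends on $x$ (through the threshold $\gamma$) and on $\epsilon,\delta$, but \emph{not} on the subsequence $\{k_j\}$, so the same $e$ works simultaneously for every strictly increasing sequence in $\mathbb{N}_0$.
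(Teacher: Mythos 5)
Your proposal is correct and follows essentially the same route as the paper: apply the b.u.e.m.\ (u.e.m.) hypothesis with the given $\epsilon,\delta$ to obtain $\gamma$ and a single projection $e$ controlling $\sup_{k}\|eT^{k}(x)e\|_\infty$, then bound the subsequential Ces\`{a}ro average by the triangle inequality, noting that $e$ is independent of the subsequence $\{k_j\}$.
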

\begin{proof}
Assume $\epsilon,\delta>0$, and let $\gamma>0$ be as in the definition for $\{T^n\}_{n=0}^{\infty}$ being b.u.e.m. at zero on $(L_p,\|\cdot\|_p)$. Let $x\in L_p$ be such that $\|x\|_p<\gamma$, and let $e\in\mathcal{P}(\mathcal{M})$ be the corresponding projection so that $\tau(e^\perp)\leq\epsilon$ and $\sup_{n}\|eT^n(x)e\|_\infty\leq\delta$.
	
Let $\{k_j\}_{j=0}^{\infty}\subseteq\mathbb{N}_0$ be a strictly increasing sequence. Then, for any $n\in\mathbb{N}$,
$$\left\|\frac{1}{n}\sum_{j=0}^{n-1}eT^{k_j}(x)e\right\|_\infty\leq\frac{1}{n}\sum_{j=0}^{n-1}\|eT^{k_j}(x)e\|_\infty\leq\frac{1}{n-1}\sum_{j=0}^{n-1}\delta=\delta.$$
As $n\in\mathbb{N}$ was arbitrary, the result holds.
\end{proof}

Suppose a sequence $\{a_n\}_{n=0}^{\infty}\subset\mathbb{C}$ converges to $a$. Then any sequence obtained by repeating terms finitely many times in a row will also converge to $a$ as well, although at a possibly slower rate. In particular, if $\pi(n)$ denotes the number of primes less than or equal to $n$, then $a_n\to a$ as $n\to\infty$ implies that $a_{\pi(n)}\to a$.

\begin{teo}\label{t51}
Let $1<q<\infty$ and $T\in DS^+$ be such that $\{T^n\}_{n=0}^{\infty}$ is b.u.e.m. (u.e.m.) at zero on $(L_q,\|\cdot\|_q)$ and $T^n(x)\to0$ b.a.u. (a.u.) for every $x\in\mathcal{V}_q(T)$. Then, if $\{p_k\}_{k=0}^{\infty}$ denotes the increasing enumeration of the primes in $\mathbb{N}_0$, and $\Lambda$ denotes the sequence determined by the von Mangoldt function, then the sequences
$$\frac{1}{n}\sum_{k=0}^{n-1}T^{p_k}(x), \ \text{ and } \  M_n^\Lambda(T)(x)=\frac{1}{n}\sum_{k=0}^{n-1}\Lambda(k)T^k(x)$$ both converge b.a.u. (a.u.) for every $x\in L_q$.
\end{teo}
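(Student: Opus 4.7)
The plan is to follow the Jacobs--de Leeuw--Glicksberg splitting $L_q=\overline{\operatorname{span}(\mathcal{U}_q(T))}\oplus\mathcal{V}_q(T)$, verify b.a.u.\ (a.u.) convergence of each of the two averages on each summand, and then extend to all of $L_q$ by a closed-subspace argument using the maximal inequalities already developed. Throughout, I will only write out the b.a.u.\ case.

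On the eigenspace side, suppose $T(x)=\lambda x$ with $\lambda\in\mathbb{T}$ and fix $\epsilon>0$. As in Lemma \ref{l42}, pick $e\in\mathcal{P}(\mathcal{M})$ with $\tau(e^\perp)\leq\epsilon$ and $xe\in\mathcal{M}$. Then both types of averages reduce to scalar sequences times $xe$:
\[
\Bigl(\frac{1}{n}\sum_{k=0}^{n-1}\lambda^{p_k}\Bigr)xe\quad\text{and}\quad\Bigl(\frac{1}{n}\sum_{k=0}^{n-1}\Lambda(k)\lambda^k\Bigr)xe.
\]
The first scalar average converges for every $\lambda\in\mathbb{T}$ by classical analytic number theory (Dirichlet's theorem on primes in arithmetic progressions when $\lambda$ is a root of unity, and Vinogradov's bound on exponential sums over primes otherwise); the second converges because $\Lambda$ is a Hartman sequence. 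By linearity, convergence holds on all of $\operatorname{span}(\mathcal{U}_q(T))$.

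On the flight-vector side, fix $x\in\mathcal{V}_q(T)$ and $\epsilon>0$, and use the hypothesis to produce $e\in\mathcal{P}(\mathcal{M})$ with $\tau(e^\perp)\leq\epsilon$ and $a_k:=\|eT^k(x)e\|_\infty\to 0$. Then
\[
\Bigl\|e\,\frac{1}{n}\sum_{k=0}^{n-1}T^{p_k}(x)\,e\Bigr\|_\infty\leq\frac{1}{n}\sum_{k=0}^{n-1}a_{p_k},
\]
which is the $n$-th Ces\`aro mean of a subsequence of the null sequence $\{a_k\}$ and hence tends to $0$. Similarly,
\[
\bigl\|eM_n^\Lambda(T)(x)e\bigr\|_\infty\leq\frac{1}{n}\sum_{k=0}^{n-1}\Lambda(k)\,a_k,
\]
which also tends to $0$ by a standard truncation argument: $\{a_k\}$ is bounded and null, while $\frac{1}{n}\sum_{k=0}^{n-1}\Lambda(k)\to 1$ by the prime number theorem, so splitting the sum at a large index $K$ and sending first $n\to\infty$ then $K\to\infty$ gives the conclusion.

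Finally, the set of $x\in L_q$ for which $\bigl\{\frac{1}{n}\sum_{k=0}^{n-1}T^{p_k}(x)\bigr\}_{n=1}^\infty$ converges b.a.u.\ is a closed subspace of $L_q$ by Proposition \ref{p51} combined with Proposition \ref{p22}, and the analogous set for $\{M_n^\Lambda(T)(x)\}_{n=1}^\infty$ is closed by Proposition \ref{p32} applied to $\mathcal{W}=\{\Lambda\}\subseteq W_1$ together with Theorem \ref{t31}. Since $\operatorname{span}(\mathcal{U}_q(T))+\mathcal{V}_q(T)$ is dense in $L_q$ and is contained in both closed sets, each set equals $L_q$. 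The chief non-trivial ingredient is the scalar number-theoretic fact that $\frac{1}{n}\sum_{k=0}^{n-1}\lambda^{p_k}$ converges for every $\lambda\in\mathbb{T}$; the remaining operator-theoretic steps are routine given the maximal machinery already in place.
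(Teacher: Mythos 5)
Your proof is correct and follows essentially the same route as the paper: split via the Jacobs--de Leeuw--Glicksberg decomposition, reduce the eigenvector case to the scalar convergence of $\frac{1}{n}\sum_{k=0}^{n-1}\lambda^{p_k}$, handle flight vectors by the Ces\`aro-mean-of-a-null-sequence estimate, and close up using Propositions \ref{p51}, \ref{p22}, \ref{p32} and Theorem \ref{t31}. The only cosmetic difference is that for $\frac{1}{n}\sum_{k=0}^{n-1}\Lambda(k)\|eT^k(x)e\|_\infty\to0$ you give a direct truncation argument via the prime number theorem, whereas the paper invokes \cite[Lemma 4.1]{el} to compare with the prime-indexed averages; both are valid.
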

\begin{proof}
If $x\in\mathcal{U}_q(T)$ and $\epsilon>0$, then $T(x)=\lambda x$ for some $\lambda\in\mathbb{T}$. As such, $$\frac{1}{n}\sum_{k=0}^{n-1}T^{p_k}(x)=\left(\frac{1}{n}\sum_{k=0}^{n-1}\lambda^{p_k}\right)x.$$
It is known that $\{\frac{1}{n}\sum_{k=0}^{n-1}\lambda^{p_k}\}_{n=1}^{\infty}$ converges for every $\lambda\in\mathbb{T}$ by \cite[Theorem 4.3]{el}. Hence, the scalar sequence converges, and so choosing $e\in\mathcal{P}(\mathcal{M})$ with $\tau(e^\perp)\leq\epsilon$ and $xe\in\mathcal{M}$ completes this case similar to before.

Now assume $x\in\mathcal{V}_q(T)$ and $\epsilon>0$, and let $e\in\mathcal{P}(\mathcal{M})$ be such that $\tau(e^\perp)\leq\epsilon$ and $\|eT^n(x)e\|_\infty\to0$. Then the subsequence $\|eT^{p_n}(x)e\|_\infty\to0$; consequently, the Ces\`{a}ro averages $\frac{1}{n}\sum_{k=0}^{n-1}\|eT^{p_k}(x)e\|_\infty\to0$ as well. Consequently,
$$\left\|e\left(\frac{1}{n}\sum_{k=0}^{n-1}T^{p_k}(x)\right)e\right\|_\infty\leq\frac{1}{n}\sum_{k=0}^{n-1}\|eT^{p_k}(x)e\|_\infty\to0.$$
Since $x\in\mathcal{V}_q(T)$ was arbitrary, it follows by Proposition \ref{p51} and Proposition \ref{p22} that these subsequential averages converge b.a.u. for every $x\in L_q$.

Now we prove the b.a.u. convergence of $\{M_n^\Lambda(T)(x)\}_{n=1}^{\infty}$ for every $x\in L_q$. Since $\Lambda\in H\subset W_1$, it follows by Lemma \ref{l42} that $\text{span}(\mathcal{U}_q(T))\subseteq bWW_q(\{\Lambda\})$; hence, we just need to prove the claim for $x\in\mathcal{V}_q(T)$.

Assume $x\in\mathcal{V}_q(T)$ and observe that that
$$\left\|e\left(\frac{1}{n}\sum_{k=0}^{n-1}\Lambda(k)T^k(x)\right)e\right\|_\infty\leq\frac{1}{n}\sum_{k=0}^{n-1}\Lambda(k)\|eT^k(x)e\|_\infty.$$
Using $b_n=\|eT^n(x)e\|_\infty$ in \cite[Lemma 4.1]{el}, noting that $\sup_{n}\|eT^n(x)e\|_\infty<\infty$ since the sequence is convergent to $0$, we find that $$\left|\frac{1}{n}\sum_{k=0}^{n-1}\Lambda(k)\|eT^k(x)e\|_\infty-\frac{1}{\pi(n)}\sum_{k=0}^{\pi(n)-1}\|eT^{p_k}(x)e\|_\infty\right|\to0$$ as $n\to\infty$. Since $\frac{1}{\pi(n)}\sum_{k=0}^{\pi(n)-1}\|eT^{p_k}(x)e\|_\infty\to0$, the comment before the theorem implies that $\frac{1}{n}\sum_{k=0}^{n-1}\Lambda(k)\|eT^k(x)e\|_\infty\to0$. Therefore $eM_n^\Lambda(T)(x)e\to0$ in $\mathcal{M}$, and since $\epsilon>0$ and $x\in\mathcal{V}_q(T)$ are arbitrary, it follows that $\mathcal{V}_q(T)\subseteq bWW_q(\{\Lambda\})$.

By Proposition \ref{p32} we know that the weighted averages $\{M_n^\Lambda(T)\}_{n=1}^{\infty}$ are b.u.e.m. at zero on $(L_q,\|\cdot\|_q)$. Therefore,  since $\text{span}(\mathcal{U}_q(T))+\mathcal{V}_q(T)$ is dense in $L_q$, we apply Theorem \ref{t31} to obtain that $bWW_q(\{\Lambda\})=L_q$, which proves the claim.
\end{proof}

These assumptions also work in regards to studying the b.a.u. and a.u. convergence of averages along moving average sequences.

A moving average sequence is a sequence $w=\{(k_n,m_n)\}_{n=1}^{\infty}$, where $m_n\in\mathbb{N}_0$, $k_n\in\mathbb{N}$, and $k_n\to\infty$ as $n\to\infty$. Given $T\in DS^+$ and $x\in L_p$ with $1\leq p<\infty$, the corresponding averages are denoted by
$$M_{w,n}(T)(x)=\frac{1}{k_n}\sum_{j=0}^{k_n-1}T^{m_n+j}(x).$$ The usual ergodic averages correspond to $k_n=n$ and $m_n=0$ for every $n\in\mathbb{N}$.

The proof of the following technical result regarding the averages along these types of sequences is very similar to that of Proposition \ref{p51}; hence, it is omitted.

\begin{pro}\label{p52}
Let $1<q<\infty$ and $T\in DS^+$ be such that $\{T^n\}_{n=0}^{\infty}$ is b.u.e.m. (u.e.m.) at zero on $(L_q,\|\cdot\|_q)$. Then, for every $\epsilon,\delta>0$, there exists $\gamma>0$ such that, for any $x\in L_q$ with $\|x\|_q<\gamma$, there exists $e\in\mathcal{P}(\mathcal{M})$ such that $\tau(e^\perp)\leq\epsilon$ and, for every moving average sequence $w$,
$$\sup_{n\in\mathbb{N}}\|eM_{w,n}(T)(x)e\|_\infty\leq\delta \  (\text{respectively, }\sup_{n\in\mathbb{N}}\|M_{w,n}(T)(x)e\|_\infty\leq\delta).$$
\end{pro}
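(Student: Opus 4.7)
The plan is to mimic almost verbatim the argument given for Proposition \ref{p51}, since the only structural difference between a subsequential average and a moving average is the presence of a uniform block $\{m_n,m_n+1,\dots,m_n+k_n-1\}$ of consecutive indices rather than a strictly increasing sequence, and the bound we use is insensitive to this distinction.

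First I would fix $\epsilon,\delta>0$ and invoke the b.u.e.m.\ (u.e.m.) hypothesis on $\{T^n\}_{n=0}^{\infty}$ at zero on $(L_q,\|\cdot\|_q)$ to obtain $\gamma>0$ such that, whenever $x\in L_q$ satisfies $\|x\|_q<\gamma$, there exists $e\in\mathcal{P}(\mathcal{M})$ with $\tau(e^\perp)\leq\epsilon$ and
\[
\sup_{k\in\mathbb{N}_0}\|eT^k(x)e\|_\infty\leq\delta\quad\Big(\text{respectively, }\sup_{k\in\mathbb{N}_0}\|T^k(x)e\|_\infty\leq\delta\Big).
\]
This single projection $e$ will be the one the proposition asks for; it is chosen independently of the moving average sequence $w$.

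Next, given any moving average sequence $w=\{(k_n,m_n)\}_{n=1}^{\infty}$ and any $n\in\mathbb{N}$, I would apply the triangle inequality to the finite sum defining $M_{w,n}(T)(x)$:
\[
\|eM_{w,n}(T)(x)e\|_\infty
=\left\|\frac{1}{k_n}\sum_{j=0}^{k_n-1}eT^{m_n+j}(x)e\right\|_\infty
\leq\frac{1}{k_n}\sum_{j=0}^{k_n-1}\|eT^{m_n+j}(x)e\|_\infty\leq\delta,
\]
since each term in the sum is bounded by $\delta$ by the previous step (the indices $m_n+j$ are just nonnegative integers). Taking the supremum over $n\in\mathbb{N}$ yields the desired bound uniformly in $n$, and since the projection $e$ did not depend on $w$, the bound holds for every moving average sequence simultaneously. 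The u.e.m.\ case is handled identically by dropping the outer $e$ in each estimate.

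There is no serious obstacle here; the only thing worth emphasizing in the writeup is that the projection $e$ produced by the hypothesis depends only on $x$ and the pair $(\epsilon,\delta)$, and not on the block structure $(k_n,m_n)$, so the same $e$ works uniformly over all moving average sequences $w$. This is precisely the phenomenon that was already exploited in Proposition \ref{p51}, which is why the author notes the proof can be omitted.
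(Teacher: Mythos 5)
Your argument is correct and is precisely the proof the paper intends: it transplants the argument of Proposition \ref{p51} verbatim, using the single projection $e$ furnished by the b.u.e.m.\ (u.e.m.) hypothesis—which depends only on $x$, $\epsilon$, and $\delta$—and then bounding the block average $\frac{1}{k_n}\sum_{j=0}^{k_n-1}\|eT^{m_n+j}(x)e\|_\infty$ by $\delta$ via the triangle inequality, uniformly over all moving average sequences $w$. This matches the paper's (omitted) proof exactly, so there is nothing to add.
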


\begin{teo}\label{t52}
Let $1<q<\infty$ and $T\in DS^+$ be such that $\{T^n\}_{n=0}^{\infty}$ is b.u.e.m. (u.e.m.) at zero on $(L_q,\|\cdot\|_q)$ and $T^n(x)\to0$ b.a.u. (respectively, a.u.) for every $x\in\mathcal{V}_q(T)$. Then, for every $x\in L_q(\mathcal{M},\tau)$ and every $\epsilon>0$, there exists $e\in\mathcal{P}(\mathcal{M})$ such that $\tau(e^\perp)\leq\epsilon$ and the sequence $\{eM_{w,n}(T)(x)e\}_{n=1}^{\infty}$ (respectively, $\{M_{w,n}(T)(x)e\}_{n=1}^{\infty}$) converges in $\mathcal{M}$ for every moving average sequence $w$.
\end{teo}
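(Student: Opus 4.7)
Write $\mc G\su L_q$ for the set of $x\in L_q$ satisfying the conclusion of the theorem, i.e., for every $\ep>0$ there is $e\in\mc P(\mc M)$ with $\tau(e^\perp)\leq\ep$ such that $\{eM_{w,n}(T)(x)e\}_{n=1}^\ii$ converges in $\mc M$ for \emph{every} moving average $w$. Mimicking Theorems \ref{t42}, \ref{t43}, and \ref{t51}, my strategy is to prove $\mc G=L_q$ via the Jacobs--de Leeuw--Glicksberg decomposition $L_q=\ol{\operatorname{span}(\mc U_q(T))}\oplus\mc V_q(T)$ together with a Wiener--Wintner form of the Banach principle whose input is the uniform maximal inequality in Proposition \ref{p52}.

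\textbf{Eigenvectors.} For $x\in\mc U_q(T)$ with $T(x)=\lb x$, $\lb\in\mbb T$, one computes
\[
M_{w,n}(T)(x)=\lb^{m_n}\left(\frac{1}{k_n}\sum_{j=0}^{k_n-1}\lb^j\right)x.
\]
If $\lb=1$ the scalar factor is $1$; otherwise it is bounded in modulus by $2/(k_n|1-\lb|)$, which vanishes since $k_n\to\ii$. In either case the scalar converges independently of $w$, so any $e\in\mc P(\mc M)$ with $\tau(e^\perp)\leq\ep$ and $xe\in\mc M$ (hence $exe\in\mc M$) works uniformly in $w$. Taking meets of such projections for finite linear combinations gives $\operatorname{span}(\mc U_q(T))\su\mc G$.

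\textbf{Flight vectors.} For $x\in\mc V_q(T)$ the hypothesis yields, for $\ep>0$, a projection $e$ with $\tau(e^\perp)\leq\ep$ and $a_k:=\|eT^k(x)e\|_\ii\to0$. For any moving average $w=\{(k_n,m_n)\}$,
\[
\|eM_{w,n}(T)(x)e\|_\ii\leq\frac{1}{k_n}\sum_{j=0}^{k_n-1}a_{m_n+j}.
\]
Since $a_k\to0$ and $k_n\to\ii$, splitting the sum at the index beyond which $a_k$ is small shows that the right-hand side tends to $0$ for every $w$ (the bound depends only on the null sequence $\{a_k\}$, not on $w$), so the same $e$ serves all $w$. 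Thus $\mc V_q(T)\su\mc G$.

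\textbf{Closedness and conclusion.} The main obstacle is showing that $\mc G$ is closed in $L_q$, since its definition already encodes Wiener--Wintner-type uniformity over $w$. The key observation is that Proposition \ref{p52} provides exactly the analogue of the ``$\mc W$-b.u.e.m.\ at zero'' hypothesis in Theorem \ref{t31}: for every $\ep,\dt>0$ there is $\gm>0$ such that $\|y\|_q<\gm$ implies the existence of $e$ with $\tau(e^\perp)\leq\ep$ and $\sup_n\|eM_{w,n}(T)(y)e\|_\ii\leq\dt$ \emph{uniformly in $w$}. With this replacement, the diagonal argument of Theorem \ref{t31} transfers almost verbatim: given $z_j\to x$ in $L_q$ with $z_j\in\mc G$, one builds projections $e_{k,m}$ from Proposition \ref{p52} applied to $y_j=z_j-x$ and projections $f_{k,m}$ from $x+y_{j_{k,m}}=z_{j_{k,m}}\in\mc G$ with complementary traces bounded by $\ep/2^{k+m+1}$, sets $g=\bigwedge_{k,m}(e_{k,m}\wedge f_{k,m})$, and checks that $\{gM_{w,n}(T)(x)g\}_n$ is Cauchy in $\mc M$ for every $w$. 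Density of $\operatorname{span}(\mc U_q(T))+\mc V_q(T)$ in $L_q$ then yields $\mc G=L_q$, and the u.e.m./a.u. case is handled identically by replacing $e(\cdot)e$ with $(\cdot)e$ throughout.
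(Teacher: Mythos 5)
Your proposal is correct and follows essentially the same route as the paper: the Jacobs--de Leeuw--Glicksberg decomposition of $L_q$, direct verification on eigenvectors and on flight vectors (with the same split of the sum at the index where $\|eT^k(x)e\|_\infty$ becomes small), and closedness of the good set via Proposition \ref{p52} fed into the diagonal argument of Theorem \ref{t31}. The only, harmless, deviation is in the eigenvector step, where you evaluate the geometric sum $\frac{1}{k_n}\sum_{j=0}^{k_n-1}\lambda^{m_n+j}$ directly, whereas the paper instead invokes the norm convergence theorem of \cite{comli} to identify the limit inside $\operatorname{span}(\{x\})$.
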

\begin{proof}
We prove the b.a.u. version only. Let $\mathcal{C}$ denote the set of all $x\in L_q$ such that, for every $\epsilon>0$, there exists $e\in\mathcal{P}(\mathcal{M})$ such that $\tau(e^\perp)\leq\epsilon$ and $\{eM_{w,n}(T)(x)e\}_{n=1}^{\infty}$ converges in $\mathcal{M}$ for every moving average sequence $w$.

Assume $x\in\mathcal{U}_q(T)$ and $\epsilon>0$, and let $\lambda\in\mathbb{T}$ be such that $T(x)=\lambda x$. Let $w=\{(k_n,m_n)\}_{n=1}^{\infty}$ be a moving average sequence. Note that $T^n(x)\in\text{span}(\{x\})$ for every $n\in\mathbb{N}_0$, meaning $M_{w,n}(T)(x)\in\text{span}(\{x\})$ for every $n\geq0$. By \cite[Theorem 2.1]{comli}, we know that $M_{w,n}(T)(x)$ converges in the $L_q$-norm to some $\widehat{x}\in L_q$. However, since the entire sequence is in $\text{span}(\{x\})$, and this space is a closed subspace of $L_q$ since it is finite dimensional, it follows that $\widehat{x}\in\text{span}(\{x\})$, so that  $\widehat{x}=\mu_w x$ for some $\mu_w\in\mathbb{C}$. Notice that the problem has been reduced to studying the scalar case of $\frac{1}{k_n}\sum_{j=0}^{k_n-1}\lambda^{m_n+j}\to\mu_w$ as $n\to\infty$.

Let $e\in\mathcal{P}(\mathcal{M})$ be such that $\tau(e^\perp)\leq\epsilon$ and $xe\in\mathcal{M}$. Then
$$\lim_{n\to\infty}\|(M_{w,n}(T)(x)-\widehat{x})e\|_\infty
=\lim_{n\to\infty}\left|\frac{1}{k_n}\sum_{j=0}^{k_n-1}\lambda^{m_n+j}-\mu_w\right|\|xe\|_\infty
=0.$$
Since the moving average sequence $w$ and $\epsilon>0$ were arbitrary, it follows that $x\in\mathcal{C}$, and since $x$ was arbitrary, it follows that $\operatorname{span}(\mathcal{U}_q(T))\subseteq\mathcal{C}$.

Now assume that $x\in\mathcal{V}_q(T)$ and $\epsilon>0$, and let $w=\{(k_n,w_n)\}_{n=1}^{\infty}$ be a moving average sequence. Let $e\in\mathcal{P}(\mathcal{M})$ be such that $\|eT^n(x)e\|_\infty\to0$ as $n\to\infty$. Let $S=\sup_{n\in\mathbb{N}_0}\|eT^n(x)e\|_\infty$, noting that this value is finite since the sequence is convergent.

Assume $\delta>0$. Let $N_1\in\mathbb{N}$ be such that $n\geq N_1$ implies $\|eT^n(x)e\|_\infty\leq\frac{\delta}{2}$ (since the sequence goes to $0$). Let $N\geq N_1$ be such that $\frac{1}{k_n}<\frac{\delta}{2N_1 (S+1)}$ for $n\geq N$ (since $k_n\to\infty$ as $n\to\infty$). Then we find that, for $n\geq N$,
\begin{align*}
\|eM_{w,n}(T)(x)e\|_\infty
&\leq\frac{1}{k_n}\sum_{j=0}^{k_n-1}\|eT^{m_n+j}(x)e\|_\infty \\
&=\frac{1}{k_n}\sum_{\substack{0\leq j\leq k_n-1\\ m_n+j<N_1}}\|eT^{m_n+j}(x)e\|_\infty + \frac{1}{k_n}\sum_{\substack{0\leq j\leq k_n-1\\ m_n+j\geq N_1}}\|eT^{m_n+j}(x)e\|_\infty \\
&\leq\frac{1}{k_n}\sum_{\substack{0\leq j\leq k_n-1\\ m_n+j<N_1}}S +\frac{1}{k_n}\sum_{\substack{0\leq j\leq k_n-1\\ m_n+j\geq N_1}}\frac{\delta}{2}
\leq\frac{N_1S}{k_n}+\frac{k_n\delta}{2k_n}\leq\frac{\delta}{2}+\frac{\delta}{2}=\delta.
\end{align*}
Since $\delta>0$ was arbitrary, it follows that $\|eM_{w,n}(T)(x)e\|_\infty\to0$ as $n\to\infty$, and so $x\in\mathcal{C}$. Since $x$ was arbitrary, it follows that $\mathcal{V}_q(T)\subseteq\mathcal{C}$.

Therefore we have shown that $\text{span}(\mathcal{U}_q(T))+\mathcal{V}_q(T)$ is contained in $\mathcal{C}$. The fact that $\mathcal{C}$ is closed in $L_q$ follows from an argument similar to the proof of Theorem \ref{t31} via Proposition \ref{p52}. Hence, it follows that $L_q(\mathcal{M},\tau)=\mathcal{C}$.
\end{proof}

\begin{rem}\label{r51}
This statement can be rewritten as in the case of Corollary \ref{c42}, which seems to be a new result of its kind in the commutative setting.
\end{rem}

\section{Examples}\label{s6}

There are functions on a probability space whose averages with respect to the Koopman operator of a measure-preserving transformation fail to converge almost everywhere when weighted by certain bounded Hartman sequences. As our results concern this class of weights, this warrants the necessity of making some assumptions on $T\in DS^+(\mathcal{M},\tau)$.

In this section, we will discuss known conditions on the operator $T\in DS^+(\mathcal{M},\tau)$ for which the assumptions we made hold. We will also provide concrete examples of operators that are known to actually satisfy those conditions. In Propositions \ref{p61} and \ref{p62} we show that the assumptions made on $T$ throughout the article are equivalent to $T^k$ satisfying the same assumptions for some $k\in\mathbb{N}$, which expands the list of such operators (see Example \ref{e65}).

Throughout this section, once $T\in DS^+$ is fixed, we will write $\mathcal{T}=\{T^n\}_{n=0}^{\infty}$. In order to state the results in a unified way from the articles they are adopted from, we will introduce some notation from the theory of noncommutative vector valued $L_p$-spaces. However, we will only use these to define weak type and strong type maximal inequalities for $\mathcal{T}$, acknowledging that the definitions can be given for more general sequences of operators on $L_1+\mathcal{M}$.
 
Fix $1\leq p\leq\infty$. Let $L_p(\mathcal{M};\ell_\infty)$ be the space of all sequences $\{x_n\}_{n=0}^{\infty}\subset L_p$ such that there exists $a,b\in L_{2p}$ and  $\{y_n\}_{n=0}^{\infty}\subset\mathcal{M}$ with $\sup_{n}\|y_n\|_\infty<\infty$ such that $x_n=ay_nb$ for every $n\geq0$. Then $L_p(\mathcal{M};\ell_\infty)$ can be shown to be a Banach space with respect to the norm
$$\|\{x_n\}_{n=0}^{\infty}\|_{L_p(\mathcal{M};\ell_\infty)}:=\inf_{x_n=ay_nb}\|a\|_{2p}\|b\|_{2p}\sup_{n\in\mathbb{N}_0}\|y_n\|_\infty,$$ where the infimum is over all factorizations of $x$ as defined above. For more information on these spaces, one can refer to \cite{dj,jx}. 
Similarly, $L_p(\mathcal{M};\ell_{\infty}^{c})$ will denote the space of all sequences $\{x_n\}_{n=1}^{\infty}\subset L_p$ such that there exists $a\in L_p$ and a bounded sequence $\{y_n\}_{n=0}^{\infty}\subseteq\mathcal{M}$ such that $x_n=y_na$ for every $n\geq0$, and define a norm on $L_p(\mathcal{M},\ell_\infty^c)$ similarly.

The family $\mathcal{T}$ is called weak type $(p,p)$ for $1\leq p<\infty$ if there exists a constant $C>0$ such that, for every $x\in L_p^+$ and $\lambda>0$, there exists $e\in\mathcal{P}(\mathcal{M})$ such that
$$\tau(e^\perp)\leq\left(\frac{C\|x\|_p}{\lambda}\right)^p \ \text{ and } \sup_{n\in\mathbb{N}_0}\|eT^n(x)e\|_\infty\leq \lambda.$$
The family $\mathcal{T}$ will be said to be of strong type $(p,p)$ for $1\leq p\leq\infty$ if there exists $C>0$ such that, for every $x\in L_p$,
$$\|\{T^n(x)\}_{n=0}^{\infty}\|_{L_p(\mathcal{M};\ell_\infty)}\leq C\|x\|_p.$$

In \cite{jx}, it was shown that $\mathcal{T}$ being strong type $(p,p)$ is equivalent to: for every $x\in L_p^+$, there exists $a\in L_p^+$ and $C>0$ such that
$$\|a\|_p\leq C\|x\|_p \ \text{ and } \ T^n(x)\leq a\text{ for every }n\in\mathbb{N}_0.$$

It is known that $\mathcal{T}$ being strong type $(p,p)$ implies that it is weak type $(p,p)$ by using an appropriate spectral argument on $a$. Furthermore, one can observe that $\mathcal{T}$ being weak type $(p,p)$ implies that it is b.u.e.m. at zero on $(L_p,\|\cdot\|_p)$.

For technical reasons, adopting a non-standard notation, we will say that $\mathcal{T}$ is right-sided weak type $(p,p)$ if there exists a constant $C>0$ such that, for every $x\in L_p^+$ and $\lambda>0$, there exists $e\in\mathcal{P}(\mathcal{M})$ such that
$$\tau(e^\perp)\leq\left(\frac{C\|x\|_p}{\lambda}\right)^p \ \text{ and } \sup_{n\in\mathbb{N}_0}\|T^n(x)e\|_\infty\leq \lambda.$$
Similarly, we say that $\mathcal{T}$ is right-sided strong type $(p,p)$ if there exists $C>0$ such that, for every $x\in L_p$,
$$\|\{T^n(x)\}_{n=0}^{\infty}\|_{L_p(\mathcal{M};\ell_\infty^c)}\leq C\|x\|_p.$$ Similar relations between right-sided strong and weak type $(p,p)$ imply that $\mathcal{T}$ being either implies that it is u.e.m. at zero on $(L_p,\|\cdot\|_p)$.

\begin{ex}\label{e61}
As mentioned in the introduction, in the commutative setting (see \cite[Corollary 1]{st}), if $(X,\mathcal{F},\mu)$ is a probability space and $T:L_1(X,\mathcal{F},\mu)\to L_1(X,\mathcal{F},\mu)$ is a Dunford-Schwartz operator such that $\int_X (Tf)\overline{f}\ d\mu\geq0$ for every $f\in L_2(X,\mathcal{F},\mu)$ (in other words, the restriction of $T$ to $L_2(X,\mathcal{F},\mu)$ is a positive operator), then $\lim_{n\to\infty}T^nf(x)$ exists $\mu$-a.e. (and so $\{T^nf\}_{n=1}^{\infty}$ converges ($\mu$-)a.u. by Egorov's Theorem) for every $f\in L_p(X,\mathcal{F},\mu)$ whenever $1<p\leq\infty$.
\end{ex}

\begin{ex}\label{e62}
The previous example was generalized to the noncommutative setting in \cite[Theorem 6.7]{jx}, where it was shown that, if the restriction of $T$ to $L_2(\mathcal{M},\tau)$ is positive as a Hilbert space operator, then $T^n(x)\to F(x)$ b.a.u. as $n\to\infty$ for each $x\in L_p$ with $1<p<\infty$ and a.u. for $x\in L_p$ with $2<p<\infty$, where $F:L_p\to L_p$ is the projection of $L_p$ onto the fixed point space of $T$.

The family $\mathcal{T}$ is strong type $(q,q)$ for $1<q<\infty$ by \cite[Theorem 5.2]{jx}, and is right-sided strong type $(q,q)$ for $2<q<\infty$ by Corollary 5.9 of the same article.
\end{ex}

\begin{ex}\label{e63}
A Stoltz region with vertex $1$ is a set $D_\delta$, where $\delta>0$ and
$$D_\delta=\{z\in\mathbb{C}:|z|<1\text{ and }|1-z|<\delta(1-|z|)\}.$$

The numerical range of an operator $S$ on a Hilbert space $\mathcal{H}$ is the set
$$\Theta(S)=\{\langle S(\xi),\xi\rangle:\xi\in\mathcal{H},\|\xi\|=1\}.$$

Viewing $T\in DS^+$ as a contraction on $L_2$, if its numerical range is contained in a Stoltz region with vertex $1$, it was shown in \cite[Corollary 4.4]{be} that $T^n(x)\to F(x)$ b.a.u. (a.u.) as $n\to\infty$ for every $x\in L_p$ for $1<p<\infty$ (respectively, $2<p<\infty$). 

Furthermore, a strong type $(q,q)$ result is obtained for $\mathcal{T}$ when $1<q\leq\infty$, and a right-sided strong type $(q,q)$ result is obtained when $2<q\leq\infty$ by Theorem 3.4 and Corollary 3.5 of \cite{be}.
\end{ex}

\begin{ex}\label{e64}
One important example studied in \cite{be} that falls into the setting of Example \ref{e63} is as follows. Let $\mathcal{M}_1,...,\mathcal{M}_d$ be von Neumann subalgebras of $\mathcal{M}$ such that the restriction of $\tau$ to each $\mathcal{M}_k$ is semifinite. Let $\mathcal{E}_k$ be the normal faithful conditional expectation from $\mathcal{M}$ to $\mathcal{M}_k$ such that $\tau\circ\mathcal{E}_k=\tau$. Each $\mathcal{E}_k$ extends to a contractive projection from $L_p(\mathcal{M},\tau)$ to $L_p(\mathcal{M}_k,\tau)$. Then the operator $T:=\mathcal{E}_1\cdots\mathcal{E}_d$ is a positive Dunford-Schwartz operator which satisfies all of the conditions of Example \ref{e63}. It was noted in \cite{be} that the limit operator is easy to compute when the trace is finite - it is the $\tau$-preserving conditional expectation of $\mathcal{M}$ onto $\bigcap_{k=1}^{n}\mathcal{M}_k$.
\end{ex}

Each of the examples above guarantees that the b.a.u or a.u. convergence of the iterates $T^n(x)$ occurs for \textit{every} $x\in L_q$. However, for our results we only need this to hold whenever $x\in\mathcal{V}_q(T)$. The following results show that the assumptions on $T$ that were made throughout the paper can be obtained by showing that $T^k$ satisfies those assumptions for some $k\in\mathbb{N}$.

\begin{pro}\label{p61}
Fix $1<q<\infty$. Assume $T\in DS^+(\mathcal{M},\tau)$ is such that, for some integer $k\geq1$, the sequence $T^{km}(x)\to0$ b.a.u. (a.u.) as $m\to\infty$ for every $x\in\mathcal{V}_q(T^k)$. Then $T^n(x)\to0$ b.a.u. (respectively, a.u.) for every $x\in\mathcal{V}_q(T)$.
\end{pro}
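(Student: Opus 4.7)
The plan is to identify the flight-vector spaces $\mathcal{V}_q(T)$ and $\mathcal{V}_q(T^k)$, after which the b.a.u.\ (a.u.)\ convergence of $T^n(x)$ reduces to $k$ separate b.a.u.\ (a.u.)\ statements along the residue classes modulo $k$, each provided directly by the hypothesis. Both $T$ and $T^k$ are contractions, hence weakly almost periodic, on the reflexive space $L_q$, so the Jacobs--de Leeuw--Glicksberg decomposition is available for each. Since every $T$-eigenvector with eigenvalue $\lambda\in\mathbb{T}$ is a $T^k$-eigenvector with eigenvalue $\lambda^k$, we have $\mathcal{U}_q(T)\subseteq\mathcal{U}_q(T^k)$. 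The reverse containment at the level of closed spans is the crux: given $y\in\mathcal{U}_q(T^k)$ with $T^ky=\lambda y$ and $\lambda\in\mathbb{T}$, the subspace $V=\operatorname{span}\{y,Ty,\ldots,T^{k-1}y\}$ is finite-dimensional and $T$-invariant, and $T^k$ acts on $V$ as $\lambda\cdot\mathrm{id}$; the minimal polynomial of $T|_V$ therefore divides $X^k-\lambda$, which has $k$ distinct unimodular roots, so $T|_V$ is diagonalizable with spectrum in $\mathbb{T}$ and $y\in\operatorname{span}(\mathcal{U}_q(T))$. Thus $\overline{\operatorname{span}(\mathcal{U}_q(T^k))}=\overline{\operatorname{span}(\mathcal{U}_q(T))}$, and the uniqueness of the JdLG complement forces $\mathcal{V}_q(T)=\mathcal{V}_q(T^k)$.

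Next, fix $x\in\mathcal{V}_q(T)=\mathcal{V}_q(T^k)$. Because bounded linear operators on $L_q$ are weakly continuous, $\mathcal{V}_q(T^k)$ is $T$-invariant: if $T^{km_j}y\to 0$ weakly, then $T^{km_j}(Ty)=T(T^{km_j}y)\to 0$ weakly as well. Hence $T^r(x)\in\mathcal{V}_q(T^k)$ for every $r\in\{0,1,\ldots,k-1\}$, and applying the hypothesis to each $T^r(x)$ yields $T^{km+r}(x)=T^{km}(T^r(x))\to 0$ b.a.u.\ as $m\to\infty$.

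To merge these $k$ b.a.u.\ convergences, fix $\epsilon>0$; for each $r\in\{0,\ldots,k-1\}$ choose $e_r\in\mathcal{P}(\mathcal{M})$ with $\tau(e_r^\perp)\leq\epsilon/k$ and $\|e_rT^{km+r}(x)e_r\|_\infty\to 0$, and set $e=\bigwedge_{r=0}^{k-1}e_r$. Then $\tau(e^\perp)\leq\epsilon$ and, since $e\leq e_r$, also $\|eT^{km+r}(x)e\|_\infty\to 0$ for each $r$. Given $\delta>0$, choosing $m_0$ large enough that $\|eT^{km+r}(x)e\|_\infty\leq\delta$ for all $m\geq m_0$ and all $0\leq r<k$ forces $\|eT^n(x)e\|_\infty\leq\delta$ whenever $n\geq km_0+(k-1)$, by writing $n=km+r$. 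Hence $T^n(x)\to 0$ b.a.u.; the a.u.\ case follows by the identical argument with $\|\,\cdot\, e\|_\infty$ in place of $\|e\,\cdot\, e\|_\infty$.

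The only non-routine step is the identity $\mathcal{V}_q(T)=\mathcal{V}_q(T^k)$, and within it the inclusion $\overline{\operatorname{span}(\mathcal{U}_q(T^k))}\subseteq\overline{\operatorname{span}(\mathcal{U}_q(T))}$; once this is established via the finite-dimensional diagonalization above, the residue-class decomposition and the meet of finitely many projections handle the rest routinely.
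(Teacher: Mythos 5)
Your proof is correct and its skeleton is exactly the paper's: identify $\mathcal{V}_q(T)$ with $\mathcal{V}_q(T^k)$, use $T$-invariance of that space to apply the hypothesis to $T^r(x)$ for $r=0,\dots,k-1$, and then merge the $k$ resulting b.a.u.\ statements with a single projection $e=\bigwedge_{r=0}^{k-1}e_r$. The only real divergence is that the paper simply cites \cite[Corollary 9.18]{efhn} for the two structural facts ($T$-invariance of $\mathcal{V}_q(T)$ and $\mathcal{V}_q(T^k)=\mathcal{V}_q(T)$), whereas you prove them from scratch: weak continuity of $T$ gives the invariance, and the finite-dimensional diagonalization of $T$ on $\operatorname{span}\{y,Ty,\dots,T^{k-1}y\}$ (minimal polynomial dividing $X^k-\lambda$, which has distinct unimodular roots) gives $\overline{\operatorname{span}(\mathcal{U}_q(T))}=\overline{\operatorname{span}(\mathcal{U}_q(T^k))}$. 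That self-contained argument is sound; the one place you are slightly terse is the final step ``uniqueness of the JdLG complement forces $\mathcal{V}_q(T)=\mathcal{V}_q(T^k)$'': complements of a subspace are not unique in general, so you should add the (trivial) inclusion $\mathcal{V}_q(T^k)\subseteq\mathcal{V}_q(T)$ from the definition of flight vectors, after which two nested complements of the same subspace must coincide. With that one sentence added, your write-up is a complete, reference-free version of the paper's proof.
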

\begin{proof}
We will prove the b.a.u. case only. Assume that $x\in\mathcal{V}_q(T)$. Since $T$ is a contraction on $L_q$, \cite[Corollary 9.18]{efhn} shows that $\mathcal{V}_q(T)$ is a subspace of $L_q$ which is invariant under $T$, so that $T^r(x)\in\mathcal{V}_q(T)$ for every $r\geq1$.

For the same reasons as above, we may apply \cite[Corollary 9.18]{efhn} again to find that $\mathcal{V}_q(T^k)=\mathcal{V}_q(T)$. So, by above and by assumption, we find that the sequence $\{T^{km}(T^r(x))\}_{m=0}^{\infty}$ converges to $0$ b.a.u. for every $r\in\{0,...,k-1\}$. Since $T^{km}(T^r(x))=T^{km+r}(x)$ for every $m\geq0$, this says that $T^{km+r}(x)\to0$ b.a.u. as $m\to\infty$ for every $r\in\{0,...,k-1\}$.

Assume $\epsilon>0$, and let $e_0,...,e_{k-1}\in\mathcal{P}(\mathcal{M})$ be such that $\tau(e_r^\perp)\leq\epsilon/k$ and $\|e_rT^{km+r}(x)e_r\|_\infty\to0$ as $m\to\infty$ for each $r\in\{0,...,k-1\}$. If we let $e=\bigwedge_{r=0}^{k-1}e_r$, then we find that $\tau(e^\perp)\leq\epsilon$ and $\|eT^{km+r}(x)e\|_\infty\to0$ as $m\to\infty$ for every $r\in\{0,...,k-1\}$.

Assume $\delta>0$. Let $M_r\in\mathbb{N}_0$ be such that $m\geq M_r$ implies $\|eT^{km+r}(x)e\|_\infty<\delta$. Let $M=\max\{M_0,...,M_{k-1}\}$. If $n=mk+r\geq (M+1)k$, where $m,r\in\mathbb{N}_0$ are such that $0\leq r<k$, then one finds that $m\geq M$, which in turn implies $\|eT^n(x)e\|_\infty=\|eT^{km+r}(x)e\|_\infty<\delta$. Therefore, since $n\geq(M+1)k$ and $\delta>0$ are arbitrary, it follows that $\|eT^n(x)e\|_\infty\to0$ as $n\to\infty$, and since $\epsilon>0$ is arbitrary, it follows that $T^n(x)\to0$ b.a.u. as $n\to\infty$.
\end{proof}

\begin{pro}\label{p62}
Fix $1\leq p<\infty$. Assume $T\in DS^+(\mathcal{M},\tau)$ is such that, for some integer $k\geq1$, the family $\{T^{km}\}_{m=0}^{\infty}$ is b.u.e.m. (u.e.m.) at $0$ on $(L_p,\|\cdot\|_p)$. Then $\{T^n\}_{n=0}^{\infty}$ is b.u.e.m. (respectively, u.e.m.) at $0$ on $(L_p,\|\cdot\|_p)$.
\end{pro}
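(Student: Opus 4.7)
The plan is to reduce the problem to $k$ ``arithmetic progressions'' modulo $k$ and then use the contraction property of $T$ on $L_p$ to feed the hypothesis with slightly different inputs. Every $n\in\mathbb N_0$ can be written uniquely as $n=km+r$ with $m\in\mathbb N_0$ and $r\in\{0,1,\dots,k-1\}$, and the identity $T^{km+r}(x)=T^{km}(T^r(x))$ lets us view $\{T^n(x)\}_n$ as the union of the $k$ subsequences obtained by applying $\{T^{km}\}_m$ to the vectors $T^r(x)$, $r=0,\dots,k-1$. The key point is that, since $T\in DS^+$, each $T^r$ is a contraction on $L_p$, so $\|T^r(x)\|_p\leq\|x\|_p$, and the hypothesis on $\{T^{km}\}_m$ can be applied to each $T^r(x)$ with the \emph{same} gauge of smallness $\gamma$.

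Concretely, fix $\epsilon,\delta>0$ and let $\gamma>0$ be the constant provided by the b.u.e.m.\ (u.e.m.) property of $\{T^{km}\}_{m=0}^{\infty}$ at zero on $(L_p,\|\cdot\|_p)$ corresponding to $\epsilon/k$ and $\delta$. Assume $x\in L_p$ with $\|x\|_p<\gamma$. For each $r\in\{0,\dots,k-1\}$, since $\|T^r(x)\|_p\leq\|x\|_p<\gamma$, one obtains $e_r\in\mathcal P(\mathcal M)$ with $\tau(e_r^\perp)\leq\epsilon/k$ and
\[
\sup_{m\in\mathbb N_0}\|e_rT^{km}(T^r(x))e_r\|_\infty=\sup_{m\in\mathbb N_0}\|e_rT^{km+r}(x)e_r\|_\infty\leq\delta
\]
(respectively, with $e_r$ on the right only, $\sup_m\|T^{km+r}(x)e_r\|_\infty\leq\delta$ in the u.e.m.\ case).

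Next I set $e=\bigwedge_{r=0}^{k-1}e_r$, so $\tau(e^\perp)\leq\sum_{r=0}^{k-1}\tau(e_r^\perp)\leq\epsilon$. Since $e\leq e_r$ for every $r$, we have $e=e_re=ee_r$, and therefore, for any $n=km+r$ with $0\leq r<k$,
\[
\|eT^n(x)e\|_\infty=\|ee_rT^n(x)e_re\|_\infty\leq\|e_rT^{km+r}(x)e_r\|_\infty\leq\delta,
\]
proving $\sup_n\|eT^n(x)e\|_\infty\leq\delta$. The u.e.m.\ case follows by the identical bound $\|T^n(x)e\|_\infty=\|T^n(x)e_re\|_\infty\leq\|T^n(x)e_r\|_\infty\leq\delta$.

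I do not expect a genuine obstacle here: the entire argument is a bookkeeping step once one observes that the hypothesis applies uniformly to each of the $k$ ``shifts'' $T^r(x)$ thanks to the $L_p$-contractivity of $T$, and that taking a meet of $k$ projections only inflates the measure of the complement by a factor of $k$. The only mild subtlety is making sure that the projection $e$ is chosen \emph{after} inspecting all $k$ residue classes simultaneously, so that it works for every $n$ at once; but this is exactly the same device already used in Proposition \ref{p61}.
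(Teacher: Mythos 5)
Your argument is correct and coincides with the paper's own proof: both decompose $n=km+r$, exploit the $L_p$-contractivity of $T^r$ to apply the hypothesis to each $T^r(x)$ with the same $\gamma$, and take the meet $e=\bigwedge_{r=0}^{k-1}e_r$ of projections with $\tau(e_r^\perp)\leq\epsilon/k$. No gaps.
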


\begin{proof}
We prove the b.u.e.m. case only.

Assume $\epsilon,\delta>0$. Let $\gamma>0$ be the value in the definition of $\{T^{km}\}_{m=0}^{\infty}$ being b.u.e.m. at zero on $(L_p,\|\cdot\|_p)$ corresponding to $\epsilon/k$ and $\delta$. 

Assume $x\in L_p$ satisfies $\|x\|_p<\gamma$. Since $T$ is a contraction on $L_p$, we have that $\|T^r(x)\|_p<\gamma$ for every $r\in\{0,...,k-1\}$. Therefore, for every such $r$, there exists $e_r\in\mathcal{P}(\mathcal{M})$ such that $\tau(e_r^\perp)\leq\epsilon/k$ and  $\sup_{m\in\mathbb{N}_0}\|e_rT^{km}(T^r(x))e_r\|_\infty\leq\delta.$ Let $e=\bigwedge_{r=0}^{k-1}e_r$, so that $$\tau(e^\perp)\leq\epsilon \, \text{ and } \, \sup_{m\in\mathbb{N}_0}\|eT^{km+r}(x)e\|_\infty\leq\delta \, \text{ for every } \,  r\in\{0,...,k-1\}.$$

If $n\in\mathbb{N}_0$, then there exists $m_n,r_n\in\mathbb{N}_0$ such that $n=m_nk+r_n$ with $0\leq r_n<k$, which implies $\|eT^n(x)e\|_\infty\leq\sup_{m\in\mathbb{N}_0}\|eT^{km+r_n}(x)e\|_\infty\leq\delta$. Since $n$ was arbitrary, we find that $\sup_{n\in\mathbb{N}_0}\|eT^n(x)e\|_\infty\leq\delta$, and since $\epsilon,\delta>0$ are arbitrary, it follows that $\{T^n\}_{n=0}^{\infty}$ is b.u.e.m. at zero on $(L_p,\|\cdot\|_p)$.
\end{proof}

Now, we will generalize Example \ref{e62}.

\begin{ex}\label{e65}
If the restriction of $T$ to $L_2(\mathcal{M},\tau)$ is self-adjoint, then $T^*=T$ implies that $T^2=T^*T$ is a positive operator on $L_2(\mathcal{M},\tau)$. Therefore, by Example \ref{e62} and Proposition \ref{p61}, $T^n(x)\to0$ b.a.u. (a.u) for every $x\in\mathcal{V}_q(T)$ with $1<q<\infty$ (respectively, $2<q<\infty$). The fact that $\{T^n\}_{n=0}^{\infty}$ is b.u.e.m. (respectively, u.e.m.) at zero on $(L_q,\|\cdot\|_q)$ can be obtained either by using both Example \ref{e62} and Proposition \ref{p62}, or directly through the results of \cite{jx}.

More generally, fix $n\in\mathbb{N}$ and let $U_n\subseteq\mathbb{T}$ denote the set of $n$-th roots of unity. Suppose that $T\in DS^+(\mathcal{M},\tau)$ is such that the restriction of $T$ to $L_2$ is normal and that $\sigma(T|_{L_2})$ is contained in $[0,1]U_n:=\{ru:r\in[0,1],u\in U_n\}$. Then $\sigma(T^n)\subseteq[0,1]$, and since $T^n$ is also normal, it follows that $T^n$ is a positive operator on $L_2$. Hence, we may apply Propositions \ref{p61} and \ref{p62} and Example \ref{e62} to find, for every $1<q<\infty$ ($2<q<\infty$), that $T^n(x)\to0$ b.a.u. (respectively, a.u.) for all $x\in\mathcal{V}_q(T)$ and that $\{T^n\}_{n=0}^{\infty}$ is b.u.e.m. (respectively, u.e.m.) at zero on $(L_q,\|\cdot\|_q)$.
\end{ex}

\begin{ex}\label{e69}
Let $\mathcal{M}$ be a von Neumann algebra with n.s.f. trace $\tau$ and let $\Phi:\mathcal{M}\to\mathcal{M}$ be a normal $\tau$-preserving $*$-automorphism (notice that it extends to a positive Dunford-Schwartz operator on $\mathcal{M}$). If $\mu$ is a probability measure on $\mathbb{Z}$, then the operator $\overline{\mu}$ defined by $$\overline{\mu}(x)=\sum_{n=-\infty}^{\infty}\mu(\{n\})\Phi^n(x), \ \text{ for } x\in L_1(\mathcal{M},\tau)+\mathcal{M},$$ is in $DS^+(\mathcal{M},\tau)$. If $\mu$ further satisfies the symmetry condition $\mu(\{-n\})=\mu(\{n\})$ for every $n\in\mathbb{Z}$, then the restriction of $\overline{\mu}$ to $L_2(\mathcal{M},\tau)$ is self-adjoint, and so falls into the setting of Example \ref{e65}.

In the commutative setting, a more general result of this nature is known for other assumptions on $\mu$, for which one can refer to \cite{jo} for details.

Let
$$\widehat{\mu}(\lambda)=\sum_{k\in\mathbb{Z}}\mu(\{k\})\lambda^{k}$$ denote the Fourier transform of $\mu$, where $\lambda\in\mathbb{T}$. Furthermore, given $\lambda\in\mathbb{T}$, assume that $|\widehat{\mu}(\lambda)|=1$ if and only if $\lambda=1$, and that
$$\sup_{|\lambda|=1;\lambda\neq1}\frac{|\widehat{\mu}(\lambda)-1|}{1-|\widehat{\mu}(\lambda)|}<\infty.$$

Let $(X,\mathcal{F},\nu,\phi)$ be an invertible probability measure preserving system, and consider the commutative von Neumann algebra $\mathcal{M}=L_\infty(X,\mathcal{F},\nu)$ acting via multiplication operators on $L_2(X,\mathcal{F},\nu)$, with $\tau$ being integration with respect to $\nu$ and $\Phi$ being the Koopman operator associated to $\phi$. Then, for every $1<p\leq\infty$ and $f\in L_p(X,\mathcal{F},\nu)$, the iterates $\{\overline{\mu}^n(f)\}_{n=0}^{\infty}$ converge $\nu$-a.e. by \cite[Theorem 4.4]{jo}, which implies a.u. convergence by Egorov's Theorem. That $\{\overline{\mu}^n\}_{n=0}^{\infty}$ is u.e.m. at zero on $(L_p(X,\mc{F},\nu),\|\cdot\|_p)$ for the same values of $p$ is a consequence of the a.u. convergence of $\{\overline{\mu}^n(f)\}_{n=0}^{\infty}$ for every $f\in L_p(X,\mc{F},\nu)$ as mentioned in Remark \ref{r21}.
\end{ex}

\begin{ex}\label{e66}

Consider $\mathbb{T}^d=\mathbb{R}^d/\mathbb{Z}^d$,
and let $m$ denote the normalized Lebesgue measure on $\mathbb{T}^d$. As in \cite{mcpo} we will construct a positive Dunford Schwartz operator on the noncommutative torus $\mathbb{T}_{\theta}^{d}$ defined below. Let $\theta=[\theta_{j,k}]_{j,k=1}^{d}$ be a real skew-symmetric $d\times d$-matrix. Define unitary operators $u_j:L_2(\mathbb{T}^d,m)\to L_2(\mathbb{T}^d,m)$ for $j=1,...,d$ by
$$(u_jf)(x_1,...,x_d)=e^{ix_j}f(x_1+\pi\theta_{j,1},x_2+\pi\theta_{j,2},...,x_d+\pi\theta_{j,d}).$$
The operators $u_1,...,u_d$ satisfy the following relations for $j,k=1,...,d$:
$$u_ku_j=e^{2\pi i\theta_{j,k}}u_ju_k$$
Let $\mathbb{T}_{\theta}^{d}$ be the von Neumann algebra on $L_2(\mathbb{T}^d,m)$ generated by $u_1,...,u_d$, and define a normal faithful tracial state $\tau$ on $\mathbb{T}_{\theta}^{d}$ by $$\tau(s)=\frac{1}{(2\pi)^d}\int_{\mathbb{T}^d}(s(\textbf{1}))(t)dm(t)=\langle s(\textbf{1}),\textbf{1}\rangle,$$ where $s\in\mathbb{T}_{\theta}^{d}$ and $\textbf{1}\in L_2(\mathbb{T}^d,m)$ denotes the constant function $\textbf{1}(t)=1$ for $t\in\mathbb{T}^d$.

Define operators $\delta_1,...,\delta_d:\mathbb{T}_{\theta}^{d}\to\mathbb{T}_{\theta}^{d}$ by
$$\delta_{j}(u_j)=2\pi iu_j \ \text{ and } \ \delta_j(u_k)=0 \text{ for }k\neq j,$$ and extend it linearly to the rest of the space.
As mentioned in \cite{cxy}, each operator $\delta_j$ acts like a partial derivative operator for functions on the usual $d$-torus. Letting $\Delta=\sum_{j=1}^{d}\delta_j^2$, one can show that $\Delta$ is a negative operator on $L_2(\mathbb{T}_{\theta}^{d},\tau)$ which is the generator of a semigroup of contractions on $L_2(\mathbb{T}_{\theta}^{d},\tau)$. Denoting this semigroup of operators by $(T_t)_{t\geq0}$, one obtains that $T_t=\exp(t\Delta)$, and that $T_t\in DS^+(\mathbb{T}_{\theta}^{d},\tau)$ for every $t\geq0$. The restriction of $T$ to $L_2(\mathbb{T}_{\theta}^{d},\tau)$ is a self-adjoint operator; in fact, each such operator is positive as an operator on $L_2(\mathbb{T}_{\theta}^{d},\tau)$ since $T_t=T_{t/2}T_{t/2}=T_{t/2}^*T_{t/2}$.

This is the heat semigroup of $\mathbb{T}_{\theta}^{d}$. In this setting, for any $t\in(0,\infty)$, the discretization $\{(T_t)^n\}_{n=0}^{\infty}$ satisfies the conditions mentioned in Example \ref{e62}.
\end{ex}

\begin{ex}\label{e67}
If $z\in\mathbb{T}$, then a Stoltz region with vertex $z$ is a set of the form $zD_\delta$ (so $x\in zD_\delta$ when $x=zy$ for $y\in D_\delta$), where $D_\delta$ is a Stoltz region with vertex $1$.

Suppose the restriction of $T$ to $L_2$ is a normal operator with $\sigma(T|_{L_2})\cap\mathbb{T}=\{\lambda_1,...,\lambda_d\}$, where $\lambda_1=1$. Assume further that, for every $z\in\sigma(T|_{L_2})\cap\mathbb{T}$, there exists a neighorhood $V_{z}$ of $z$ such that $\sigma(T)\cap V_{z}\setminus\{z\}$ is contained in a Stoltz region with vertex $z$. For each $k=1,...,d$, let $F_k$ denote the projection from $L_2$ onto $\ker(\lambda_k \operatorname{Id}_{L_2}-T)$. Then it was shown in \cite[Theorem 4.6]{be} that $T^n(x)$ converges b.a.u. to $F_1(x)$ for every $x\in\ker(F_2+...+F_d)$.

If $x\in\mathcal{V}_2(T)$, then $F_k(x)=0$ for each $k=1,2,...,d$. Therefore $x\in\ker(F_2+...+F_d)$, meaning $T^n(x)\to0$ b.a.u. for every $x\in\mathcal{V}_2(T)$. It was stated in \cite[Theorem 2.7]{be} that $\mathcal{T}$ is strong type $(2,2)$, and so is b.u.e.m. at zero on $(L_2,\|\cdot\|_2)$.

Unlike the other examples mentioned in this section, Theorems \ref{t43}, \ref{t51}, and \ref{t52} only apply for $T$ on $L_2(\mathcal{M},\tau)$. However, the conditions on $T$ are sufficient enough for the bilateral version of Theorem \ref{t42} to apply for $L_p(\mathcal{M},\tau)$ for every $1\leq p<\infty$.
\end{ex}

\begin{ex}\label{e68}
Let $\mathcal{M}$ be any semifinite von Neumann algebra, and let $u\in\mathcal{M}$ be normal contraction (as an operator on $\mathcal{H}$) whose spectrum, $\sigma(u)$, is a subset of $D_\delta\cup\{1\}$ for some $\delta>0$. Define $T_u:\mathcal{M}\to\mathcal{M}$ by $T_u(x)=u^*xu$. Then \cite[Lemma 1.1]{jx} applies to show that $T_u\in DS^+(\mathcal{M},\tau)$, and one sees that the restriction of $T_u$ to $L_2$ is normal. Furthermore, one finds that
$$\sigma(T_u)\subseteq\sigma(u^*)\sigma(u)\subseteq D_\delta\cup\{1\},$$ where $\sigma(u^*)\sigma(u)=\{ab:a\in\sigma(u^*),b\in\sigma(u)\}$. Therefore $T_u$ satisfies the conditions of Example \ref{e63}. One can also replace the condition that $\sigma(u)\subseteq D_\delta\cup\{1\}$ with $\sigma(u)\subseteq[0,1]$, $\sigma(u)\subseteq[-1,1]$, or $\sigma(u)\subseteq [0,1]U_n$, in which case the operator $T_u$ will satisfy the conditions of Example \ref{e62} or Example \ref{e65} instead. More details for this construction can be found in \cite[Example 2.8]{be}.
\end{ex}

\begin{ex}\label{e610}
Explicit examples of such an operator $T_u$ on $\mathcal{B}(\ell_2(\mathbb{N}_0))$ whose spectrum satisfy Example \ref{e68} are as follows. Let the sequence $\{\lambda_k\}_{k=0}^{\infty}$ be contained in one of $D_{\delta}\cup\{1\}$ or $[0,1]U_n$, where $\delta>0$ and $n\in\mathbb{N}_0$. Let $\{e_k\}_{k=1}^{\infty}$ denote the standard basis of $\ell_2(\mathbb{N})$. Defining $u\in\mathcal{B}(\ell_2(\mathbb{N}))$ by $u(e_k)=\lambda_k e_k$ for $k\in\mathbb{N}$ and extending linearly, one finds that $u\in\mathcal{B}(\ell_2(\mathbb{N}))$ and that $\sigma(u)$ is the closure of $\{\lambda_k:k\in\mathbb{N}\}$, which will be contained in whichever one of $D_\delta\cup\{1\}$ or $[0,1]U_n$ contained $\{\lambda_k:k\in\mathbb{N}\}$.
\end{ex}

\begin{rem}\label{r61}
The main results of this paper assume that the operator $T$ involved satisfies an assumption on its iterates. Although the examples provided above indicate that this assumption is validated, it would be desirable to weaken or find other similar assumptions for which analogous theorems can be obtained.

Also, many of the weighted ergodic theorems proved require the weights be in a certain subclass of Hartman sequences. Again, it would be desirable to know whether these results are valid for a bigger subclass of Hartman sequences. For example, in Section \ref{s5}, we showed that such results hold for $\Lambda$ even though $\Lambda\not\in W_{1^+}\cap H$. For what other sequences can we make similar assertions?

The subsequential ergodic theorems for sequences proved above (such as return time sequences, primes, and moving average sequences) are just a sample of subsequential results in this setting. Naturally, one would ask, and expect, if other subsequential ergodic theorems can be obtained in the same setting.
\end{rem}

\noindent\textbf{Acknowledgements.}
The author would like to thank Dr. Semyon Litvinov for his careful reading of the article and for his suggestions in improving numerous aspects of it.


\begin{thebibliography}{99}
	


\bibitem{be} T. Bekjan, \textit{Noncommutative maximal ergodic theorems for positive contractions}, J. Funct. Anal. 254 (2008),  no. 9, 2401-2418


\bibitem{bl} A. Bellow, V. Losert, \textit{The weighted pointwise ergodic theorem and the individual ergodic theorem along subsequences}, Transactions of the American Mathematical Society 288 (1985), 307-345

\bibitem{cxy} Z. Chen, Q. Xu, Z. Yin, \textit{Harmonic Analysis on Quantum Tori}, Comm. Math. Phys.,(2013) 322(3), 755-805


\bibitem{cl1} V. Chilin, S. Litvinov, \textit{Ergodic theorems in fully symmetric spaces of $\tau$-measurable operators}, Studia Math., 288(2) (2015), 177-195


\bibitem{cl2} V. Chilin, S. Litvinov, \textit{On individual ergodic theorems for semifinite von Neumann algebras}, J. Math. Anal. Appl. 495 (2021), https://doi.org/10.1016/j.jmaa.2020.124710


\bibitem{cl3} V. Chilin, S. Litvinov, \textit{Uniform Equicontinuity for sequences of homomorphisms into the ring of measurable operators}, Methods of Funct. Anal. Top., 12 (2)(2006), 124-130


\bibitem{cls} V. Chilin, S. Litvinov, A. Skalski, \textit{A few remarks in non-commutative ergodic theory}, J. Operator Theory, 53 (2)(2005), 331-350


\bibitem{comlio} D. \c{C}\"{o}mez, M. Lin, J. Olsen, \textit{Weighted ergodic theorems for mean ergodic $L_1$-contractions}, Trans. Amer. Math. Soc. 350 (1998), 101-117


\bibitem{comli} D. \c{C}\"{o}mez, S. Litvinov, \textit{Norm Convergence of Moving Averages for $\tau$-Integrable Operators}, Rocky Mountain J. Math. 30 (2000), no. 4, 1251-1263.


\bibitem{dj} A. Defant, M. Junge, \textit{Maximal theorems of Menchoff-Rademacher type in non-commutative L$_q$-spaces}, J. Funct. Anal. 206 (2004), 322-355


\bibitem{ei} T. Eisner, \textit{Linear sequences and weighted ergodic theorems}, Abstr. Appl. Anal. (2013), 815726


\bibitem{efhn} T. Eisner, B. Farkas, M. Haase, R. Nagel, \textit{Operator Theoretic Aspects of Ergodic Theory}, Graduate Texts in Mathematics, Springer, 2015


\bibitem{el} T. Eisner, M. Lin, \textit{On modulated ergodic theorems}, J. Nonlinear Var. Anal. 2 (2018) 131-154


\bibitem{fk} T. Fack and H. Kosaki, \textit{Generalised $s$-numbers of $\tau$-measurable operators}, Pacific J. Math. 123 (1986), 269-300


\bibitem{hs} G. Hong, M. Sun, \textit{Noncommutative multi-parameter Wiener-Wintner type ergodic theorem}, J. Funct. Anal. 275(5) (2018), 1100-1137


\bibitem{jo} R. Jones, \textit{Ergodic Theory and Connections with Analysis and Probability}, New York J. Math., 3A (1997), 31-67


\bibitem{jx} M. Junge, Q. Xu, \textit{Noncommutative maximal ergodic theorems}, J. Amer. Math. Soc., 20 (2) (2007), 385-439


\bibitem{li1} S. Litvinov, \textit{Uniform equicontinuity of sequences of measurable operators and non-commutative ergodic theorems}, Proc. of Amer. Math Soc., 140 (2012), 2401-2409


\bibitem{li2} S. Litvinov, \textit{A Non-commutative Wiener-Wintner Theorem}, Illionois J. Math. 58 (3) 697-708


\bibitem{lm} S. Litvinov, F. Mukhamedov, \textit{On individual subsequential ergodic theorem in von Neumann algebras}, Studia Math. 145 (2001), 55-62


\bibitem{lot} M. Lin, J. Olsen, A. Tempelman, \textit{On modulated ergodic theorems for Dunford-Schwartz Operators}, Illinois J. Math 43 (1999), no. 3, 542-567


\bibitem{lm} S. Litvinov, F. Mukhamedov, \textit{On individual subsequential ergodic theorem in von Neumann algebras}, Studia Math. 145 (2001), 55-62


\bibitem{mcpo} E. McDonald, R. Ponge, \textit{Dixmier trace formulas and negative eigenvalues of Schr\"{o}dinger operators on curved noncommutative tori}, preprint (2021), arXiv 2103.16869v1


\bibitem{ne} E. Nelson, \textit{Notes on non-commutative integration}, J. Funct. Anal. 15 (1984), 103-117


\bibitem{ob} M. O'Brien, \textit{Some noncommutative subsequential weighted individual ergodic theorems}, Infin. Dimens. Anal. Quantum Probab. Relat. Top. (2021), DOI: 10.1142/S0219025721500181


\bibitem{se} I. Segal, \textit{A non-commutative extension of abstract integration}, Ann. of Math 57 (1953), 401-457


\bibitem{st} E. M. Stein, \textit{On the maximal ergodic theorem}, Proceedings of the National Academy of Sciences of the United States of America, vol. 47(1961), no. 12, 1894-1897


\bibitem{ye} F. J. Yeadon, \textit{Ergodic theorems for semifinite von Neumann algebras-I}, J. London Math. Soc., 16 (2) (1977), 362-332


\end{thebibliography}
\end{document}